\newcommand{\maxlen}[1]{| #1 |}
\newcommand{\ntal}{\underline{\alpha}}
\newcommand{\nts}{\underline{s}}
\newcommand{\Cn}{\mathbb{C}^n}
\newcommand{\Cd}{\mathbb{C}^d}
\newcommand{\C}{\mathbb{C}}
\newcommand{\CN}{\mathbb{C}^N}
\newcommand{\N}{\mathbb{N}}
\newcommand{\Nn}{\mathbb{N}^n}
\newcommand{\dopvar}[2]{\frac{\partial #1}{\partial #2}}
\newlength{\extendaxesby}\setlength{\extendaxesby}{.4cm}
\DeclareMathOperator{\imag}{Im} \DeclareMathOperator{\real}{Re}
\DeclareMathOperator{\ord}{ord}
\newtheorem{thm}{Theorem}
\newtheorem{lem}[thm]{Lemma}
\newtheorem{prop}[thm]{Proposition}
\newtheorem{cor}[thm]{Corollary}
\theoremstyle{definition}
\newtheorem{defn}[thm]{Definition}
\newtheorem{exa}[thm]{Example}
\newtheorem{rem}[thm]{Remark}
\def\dbl{[\hskip -1pt [}
\def\dbr{]\hskip -1pt]}
\def \Rk{\text{\rm Rk}\,}
\newcommand{\D}{\mathcal{D}}
\newcommand{\PP}{\mathcal{P}}
\begin{document}

	\title{Finite jet determination of CR mappings}
	\author{Bernhard Lamel}

	\address{Universit\"at Wien, Fakult\"at f\"ur Mathematik, Nordbergstrasse 15, A-1090 Wien, \"Osterreich}
	\email{lamelb@member.ams.org}%
	\author{Nordine Mir}
	\address{Universit\'e de Rouen, Laboratoire de Math\'ematiques Rapha\"el Salem, UMR 6085 CNRS, Avenue de
	l'Universit\'e, B.P. 12, 76801 Saint Etienne du Rouvray, France}
	\email{Nordine.Mir@univ-rouen.fr}
	\thanks{The first author was supported by the FWF, Projekt P17111.\\
	The research of the second author was supported in part by the French National Agency for Research (ANR), project RESONANCES (programmes blancs).}
	\subjclass[2000]{32H02, 32H12, 32V05, 32V15, 32V20, 32V25, 32V35, 32V40}%
	\keywords{CR mapping, finite jet determination}%

	\begin{abstract}
	We prove the following finite jet determination result for CR
	mappings: Given a smooth generic submanifold $M\subset \C^N$, $N\geq 2$, that
	is essentially finite and of finite type at each of its points,
	for every point $p\in M$ there exists an integer $\ell_p$,
	depending upper-semicontinuously on $p$, such that for every
	smooth generic submanifold $M'\subset \CN$ of the same dimension
	as  $M$, if $h_1,h_2\colon (M,p)\to M'$ are two germs of
	smooth finite CR mappings with the same $\ell_p$ jet at $p$, then
	necessarily $j^k_ph_1=j_p^kh_2$ for all positive integers $k$.
	In the hypersurface case, this result  provides several
	new unique jet determination properties for holomorphic mappings
	at the boundary in the real-analytic case; in particular, 
	it provides the finite jet determination of arbitrary
	real-analytic CR mappings between real-analytic hypersurfaces in
	$\CN$ of D'Angelo finite type. It also yields a new
	boundary version of H. Cartan's uniqueness theorem: if
	$\Omega,\Omega'\subset \CN$ are two bounded domains with smooth
	real-analytic boundary, then there exists an integer $k$, depending
	only on the boundary $\partial \Omega$, such that if
	$H_1,H_2\colon \Omega \to \Omega'$ are two proper holomorphic
	mappings extending smoothly up to $\partial \Omega$ near some
	point $p\in\partial \Omega$ and agreeing up to order $k$ at $p$,
	 then necessarily $H_1=H_2$.
	\end{abstract}

\maketitle


\section{Introduction}\label{s:intro}

There exists a wide variety of results concerned with the rigidity
of automorphisms of a given geometric structure. In CR geometry,
one classical result of this type is given by the uniqueness
result stating that every pseudo-conformal map (or equivalently
local biholomorphic map) sending Levi-nondegenerate real-analytic
hypersurfaces of $\CN$ into each other, $N\geq 2$, is uniquely
determined by its 2-jet at any given point; this is a consequence
of the solution to the biholomorphic equivalence problem for the
class of Levi-nondegenerate hypersurfaces, obtained by E. Cartan
\cite{Ca1,Ca2} in $\C^2$ and Tanaka \cite{Ta1} and Chern-Moser
\cite{CM} in $\C^N$ for arbitrary $N\geq 2$. This result has been
the source of many recent developments and generalizations in
several directions, see e.g.\ the works \cite{Be1,Be2,Lo,BER1,BER4,BER5,BMR1,E4,kim,Travis1,ELZ1,KZ1,LM2,LM6} and also the
surveys \cite{Vi,BERbull,Rsurvey,Zsurvey,H4} for complete
references on the subject. Most of the work mentioned above is
concerned with establishing the unique jet determination property
{\em for holomorphic automorphisms}. In this paper, we are
concerned with understanding the same phenomenon for {\em finite
holomorphic mappings} (or even arbitrary CR mappings) between
generic manifolds that we allow to be of any codimension and to
have strong Levi-degeneracies. More precisely, we prove the
following theorem (see \S \ref{s:formal} for relevant definitions
and notation).

\begin{thm}\label{t:main1}
Let $M\subset \C^N$ be a smooth generic submanifold that is
essentially finite and of finite type at each of its points.  Then
for every point $p\in M$ there exists an integer $\ell_p$,
depending upper-semicontinuously on $p$, such that for every
smooth generic submanifold $M'\subset \CN$ of the same dimension
as that of $M$, if $h_1,h_2\colon (M,p)\to M'$ are two germs of
smooth finite CR mappings with the same $\ell_p$ jet at $p$, then
necessarily $j^k_ph_1=j_p^kh_2$ for all positive integers $k$.
\end{thm}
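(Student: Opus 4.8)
The plan is to reduce the statement to a finite jet determination result at the level of \emph{formal} CR mappings, and then to establish the latter by a careful analysis of the reflection identity coming from essential finiteness together with the Segre-variety machinery associated with finite type. More precisely, fix $p \in M$ and let $h_1, h_2 \colon (M,p) \to M'$ be germs of smooth finite CR maps. Since $M$ is of finite type at $p$, each $h_i$ is (by the regularity theory for finite CR maps on finite-type submanifolds) uniquely determined by its Taylor series at $p$, so it suffices to work with the formal power series mappings $\hat h_i \in (\mathbb C[[\mathbb C^N]])^N$. Moreover, because $h_i$ maps $M$ into $M'$, the map $\hat h_i$ satisfies the formal mapping identity: if $\rho'(w,\bar w)$ is a defining (vector-valued) function for $M'$, then $\rho'(\hat h_i(Z), \overline{\hat h_i(Z)}) \equiv 0$ along the complexification of $M$. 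Both $M$ and $M'$ are generic of the same dimension, $M$ is essentially finite at $p$, and finiteness of the maps forces $M'$ to be essentially finite at $h_i(p)$ as well (with a uniform bound on the relevant essential type, since a bound on the jet at $p$ bounds the relevant invariants). This is where the "upper-semicontinuous $\ell_p$" will come from: all numerical invariants entering the argument (essential type, D'Angelo-type exponents, finite-type multiplicities, and the order to which the Segre maps are of generic rank) are upper-semicontinuous in $p$, and $\ell_p$ will be an explicit function of these.

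The core of the argument is the following. Complexifying and using essential finiteness of $M$ at $p$, one derives — following the Baouendi--Ebenfelt--Rothschild reflection-principle scheme — that $\overline{\hat h_i}$, evaluated along Segre varieties, is an algebraic (in fact polynomial, after clearing denominators) function of $\hat h_i$ and of finitely many of its derivatives; iterating along chains of Segre varieties (this is where finite type is used, to reach an open set from $p$ through finitely many iterations) one obtains that $\hat h_i$ itself satisfies a system of analytic equations of the shape
\begin{equation*}
\hat h_i(Z) = \Psi\bigl(Z, \, (\partial^\alpha \hat h_i(0))_{|\alpha| \le \ell_p}\bigr),
\end{equation*}
where $\Psi$ is a \emph{universal} map (independent of $M'$), holomorphic in a neighborhood of the relevant jet, defined purely in terms of the geometry of $M$ near $p$. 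Granting such an identity, if $j^{\ell_p}_p h_1 = j^{\ell_p}_p h_2$, then $\hat h_1 = \hat h_2$, hence $j^k_p h_1 = j^k_p h_2$ for all $k$, which is the assertion. The two technical points to be checked along the way are: (i) the passage from the reflection identity to a \emph{solved} form for the full map (not just for its restriction to one Segre variety), which requires propagating the identity along finitely many Segre maps and controlling the denominators — the number of iterations is governed by the finite-type multiplicity, hence bounded upper-semicontinuously; and (ii) the uniformity of the construction in $M'$: the manifold $M'$ and its defining function enter $\Psi$ only through the jet of $\hat h_i$ at $p$, because essential finiteness of $M$ lets us eliminate $\overline{\hat h_i}$ algebraically in terms of $\hat h_i$ and a bounded-order jet, with the number of equations and their degrees controlled by invariants of $M$ alone.

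I expect the main obstacle to be point (i): making the iteration along Segre chains work at the formal level while keeping the order of derivatives that appear \emph{bounded}, and in particular uniformly bounded as $p$ varies. Each step of the iteration a priori raises the order of the jet of $\hat h_i$ that is needed, and one must show that the process stabilizes after a number of steps depending only on $M$ (via its essential type and finite-type multiplicity at $p$) and not on $M'$ or on the particular maps $h_1, h_2$. Handling this cleanly will likely require setting up the reflection identities in a form where the "new" derivatives produced at each step are expressed back in terms of a fixed finite jet by repeatedly differentiating the mapping equation $\rho'(\hat h_i, \overline{\hat h_i}) \equiv 0$ and using essential finiteness to close the system. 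A secondary, more bookkeeping-type obstacle is verifying the upper-semicontinuity of $\ell_p$: this follows once one checks that each invariant used — essential type, segre-chain length / finite-type order, and the generic rank of the relevant Segre maps — is upper-semicontinuous, and that $\ell_p$ is built from these by operations (sums, products, maxima) preserving upper-semicontinuity.
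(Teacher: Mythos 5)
There is a genuine gap, and it sits exactly where you flag ``the main obstacle''. Your scheme is essentially the Baouendi--Ebenfelt--Rothschild reflection/parametrization argument: eliminate $\overline{\hat h_i}$ along Segre varieties and iterate along Segre chains to reach a solved form $\hat h_i(Z)=\Psi\bigl(Z,(\partial^\alpha\hat h_i(0))_{|\alpha|\le \ell_p}\bigr)$ with $\Psi$ and $\ell_p$ independent of the maps. But in that scheme the elimination and the iteration introduce denominators (Jacobian-type determinants built from the map, e.g.\ $\det \bar F_\chi(\chi,0)$ and the determinants controlling the elimination of $\bar H$), and the number of derivatives one must take, as well as the order of the jet that survives the division, depends on the vanishing orders of those determinants --- i.e.\ on the particular map, not only on $(M,p)$. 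This is precisely why the result of \cite{BER5}, obtained by this route, only compares a finite map with a \emph{fixed} finite map, with a jet order depending on that fixed map; the paper states explicitly that Theorem~\ref{t:main1} cannot be derived that way. What is missing from your proposal is a map-independent bound on the degeneracy of an arbitrary finite (equivalently, CR-transversal) map. In the paper this is the content of Proposition~\ref{p:invariance} and Corollary~\ref{c:rigid}: the source invariants $\Theta_\alpha$ are expressed universally through the target invariants $\Theta'_\beta$ composed with $\bar F(\chi,0)$ and a finite jet of $H$, which yields the inequality $\nu_M(k)\ge \nu_{M'}(k)+\ord\det\bar F_\chi(\chi,0)$ and hence the uniform bound $\ord\det\bar F_\chi(\chi,0)\le\nu_M(\infty)$ valid for \emph{every} CR-transversal map; the two maps are then compared directly via the integral mean-value identity for $\Upsilon_{\ntal,\nts}$, whose Jacobian determinant has order controlled by the new invariant $\nu_M(k)$. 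Without this ingredient (or an equivalent), your claimed universal solved form is an assertion of the conclusion rather than a step toward it.

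Two secondary points. First, you invert the roles of source and target: the reflection identity eliminates $\overline{\hat h_i}$ using nondegeneracy of the \emph{target} $M'$, about which Theorem~\ref{t:main1} assumes nothing; the transfer of nondegeneracy from $M$ to $M'$ (with $\kappa_{M'}\le\kappa_M$, and with the uniform bound just mentioned) is itself a nontrivial part of the argument, and your parenthetical ``with a uniform bound on the relevant essential type, since a bound on the jet at $p$ bounds the relevant invariants'' is exactly the unproven point. Second, the upper-semicontinuity of $\ell_p$ cannot be reduced to upper-semicontinuity of classical invariants such as essential type or D'Angelo exponents alone: in the paper the jet bound is built from $\kappa_M$ and the specially designed quantities $\nu_M(k)$ (via $k_0$, $k_1=\max\{k_0,\kappa_M(\nu_M(\infty)+1)\}$ and $k_j=k_1+\kappa_M(j-1)$), and it is for these that upper-semicontinuity under continuous deformations is verified.
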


Here and throughout the paper by smooth we mean ${\mathcal C}^\infty$-smooth.
To put our main result into the proper perspective, we should mention that
Theorem~\ref{t:main1} improves the very few finite jet determination results
for finite mappings in two important different directions. Under
the same assumptions as that of Theorem~\ref{t:main1}, Baouendi, Ebenfelt and
Rothschild proved in \cite{BER5} (see also \cite{BMR1}) the finite jet
determination of finite mappings whose $k$-jet at a given point, for $k$
sufficiently large, is {\em the same as that of a given fixed finite map};
the integer $k$ does actually depend on this fixed map.

Our
result allows, on one hand,  to compare {\em arbitrary pairs of finite maps}, and cannot
be derived from the mentioned result of \cite{BER5}.
From this point of view,  Theorem~\ref{t:main1} is more natural
and satisfactory. On the other hand, our main result also provides a dependence
of the jet order (required to get the determination of the maps) on the base
point. This explicit control cannot be obtained by the techniques of \cite{BER5,BMR1} and is of
fundamental importance in order to
derive for instance Theorem~\ref{t:main2}
below.

Note that Theorem~\ref{t:main1} is new even in the case where the manifolds and
mappings are real-analytic, in which case the conclusion is that the mappings
are identical. Note also that the upper-semicontinuity of the jet order with
respect to the base point mentioned in Theorem~\ref{t:main1} was already
obtained by the authors in \cite{LM2} in the case of local biholomorphic
self-maps of real-analytic generic submanifolds of $\CN$. The proof that we
are giving of this fact in this paper has the advantage to extend to a more
general situation and to be at the same time somewhat simpler than the proof
given in \cite{LM2}.

Theorem~\ref{t:main1} offers a number of remarkable
new consequences.  The first one is given by the following finite jet
determination result for {\rm arbitrary} CR mappings between  D'Angelo finite
type hypersurfaces (in the sense of \cite{DAfintyp}). To the authors' knowledge, this result is
the first of its kind in the levi-degenerate case. (See also
Corollary~\ref{c:newcorol} below for a slightly more general version.)

\begin{cor}\label{c:cor1} Let $M,M'\subset \CN$ be  smooth real hypersurfaces of
  D'Angelo finite type. Then for every point $p\in M$, there exists a positive
  integer $\ell=\ell (M,p)$, depending upper-semicontinuously on $p$, such that
  for any pair
  $h_1,h_2\colon (M,p)\to M'$ of germs of smooth CR mappings, if
  $j_p^\ell h_1=j_p^\ell h_2$, then necessarily $j^k_ph_1=j_p^kh_2$ for all
  positive integers $k$. If in addition both
  $M$ and $M'$ are real-analytic, it follows
  that $h_1=h_2$.
\end{cor}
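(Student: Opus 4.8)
The plan is to deduce Corollary \ref{c:cor1} from Theorem \ref{t:main1} by reducing the situation of a smooth real hypersurface of D'Angelo finite type to that of a generic submanifold that is essentially finite and of finite type at each point. First I would recall that for a smooth real hypersurface $M\subset\CN$, D'Angelo finite type at a point $p$ is equivalent to finite type in the sense of Kohn--Bloom--Graham at $p$, which is the relevant notion of finite type appearing in Theorem \ref{t:main1}; moreover, in the hypersurface case, finite type at $p$ automatically implies essential finiteness at $p$. (In the real-analytic hypersurface case, D'Angelo finite type is equivalent to the nonexistence of germs of complex hypersurfaces contained in $M$, and this in turn forces both finite type and essential finiteness at every point; in the merely smooth case one uses the local description of the complex-tangential structure.) Hence $M$ satisfies the hypotheses of Theorem \ref{t:main1} at every point, and likewise $M'$ is essentially finite and of finite type at each of its points.

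Next I would address the gap between \emph{arbitrary} CR mappings, as in the statement of Corollary \ref{c:cor1}, and \emph{finite} CR mappings, as required in Theorem \ref{t:main1}. The key point is that a nonconstant smooth CR map $h\colon(M,p)\to M'$ between smooth real hypersurfaces of D'Angelo finite type is automatically a finite map at $p$: indeed, if $h$ had positive-dimensional fibers, one could produce a germ of a nontrivial complex-analytic variety in $M'$ through $h(p)$, or alternatively the rank condition would be violated, contradicting finite type of $M'$; here one invokes the standard fact that finite type hypersurfaces do not admit nonconstant CR maps with degenerate generic rank. The case $h_1$ or $h_2$ constant is handled separately and trivially: if $j^{\ell}_p h_1=j^{\ell}_p h_2$ with $\ell\ge 1$ and one of them is constant, then both have vanishing first-order part, hence both are constant equal to the same value, and the conclusion is immediate. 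So, after discarding this trivial case, both $h_1$ and $h_2$ are finite CR maps and Theorem \ref{t:main1} applies directly, yielding the integer $\ell_p$ (with the stated upper-semicontinuous dependence on $p$) and the conclusion $j^k_p h_1=j^k_p h_2$ for all $k$.

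Finally, for the real-analytic addendum, I would argue that a smooth CR map between real-analytic hypersurfaces which is finite at $p$ is automatically real-analytic near $p$; this is a classical regularity result (e.g.\ of the type established for finite CR mappings between real-analytic hypersurfaces of finite type). Therefore $h_1$ and $h_2$ are germs of holomorphic maps, their Taylor series at $p$ coincide to all orders by the first part of the corollary, and hence $h_1=h_2$ as germs of real-analytic (equivalently, holomorphic) maps.

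The main obstacle I anticipate is the verification that an arbitrary nonconstant smooth CR map between D'Angelo finite type hypersurfaces is genuinely finite at the source point: one must rule out that the map collapses a positive-dimensional complex-analytic germ, and the cleanest route is to combine the finite type of the target with the fact that the image of a finite type hypersurface under a CR map cannot contain a complex variety, together with a dimension count on the fibers. Everything else is either a direct citation of Theorem \ref{t:main1} or a standard regularity statement.
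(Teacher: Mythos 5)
Your overall strategy---reduce to Theorem~\ref{t:main1} by arguing that nonconstant smooth CR maps between D'Angelo finite type hypersurfaces are finite---is exactly the alternative derivation the authors mention only in passing (``could also be derived directly from Theorem~\ref{t:main1} using some results from \cite{BR4,ER1}''), but it is not the proof the paper gives, and as written your key step has a genuine gap. The finiteness of an arbitrary nonconstant smooth CR map between smooth D'Angelo finite type hypersurfaces is a nontrivial theorem, and your sketch does not establish it: in the merely smooth setting the only object attached to $h$ at $p$ is a formal holomorphic map, so ``positive-dimensional fibers'' and ``a germ of a complex-analytic variety in $M'$'' have no literal meaning, and even in the real-analytic case a nonconstant map of full generic rank can have a positive-dimensional fiber (e.g.\ $(z_1,z_2)\mapsto(z_1,z_1z_2)$); the classical exclusion of this uses essential finiteness of the \emph{source} together with a Segre/reflection argument, not finite type of the target, so the dimension count you propose does not by itself yield finiteness. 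The paper avoids the issue entirely: Corollary~\ref{c:cor1} is deduced from Corollary~\ref{c:newcorol}, i.e.\ from the formal Corollary~\ref{c:cor2}, where the dichotomy of \cite{LM4} (a formal map into a hypersurface containing no formal curve is either constant or CR-transversal) replaces any finiteness claim, and Theorem~\ref{t:main} is then applied to CR-transversal maps; all that is needed of D'Angelo finite type is that it puts $M$ in the class $\mathcal{C}$ and prevents $M'$ from containing formal curves. Relatedly, your treatment of the constant case is a non sequitur: vanishing of the first-order part of a smooth CR map does not imply constancy; to rule out the mixed case you need precisely such a dichotomy (nonconstant $\Rightarrow$ finite, hence CR-transversal by \cite{ER1}, hence nonzero $1$-jet).

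Two auxiliary claims are also false as stated. D'Angelo finite type is \emph{not} equivalent to Kohn--Bloom--Graham finite type when $N\geq 3$ (only the implication you actually need holds), and Kohn finite type does not imply essential finiteness for hypersurfaces: $\imag w=|z_1|^2|z_2|^2$ in $\C^3$ is of finite type at $0$ but not essentially finite (the relevant ideal is generated by $\chi_1\chi_2$). What is true, and what your reduction to Theorem~\ref{t:main1} requires, is that D'Angelo finite type of a smooth hypersurface forces its associated formal hypersurface to contain no formal curve, and for hypersurfaces this in turn gives essential finiteness; in the smooth case this must be argued at the formal level (truncation of a formal curve of infinite contact order, then the formal curve selection argument), not by ``the local description of the complex-tangential structure.'' Your route to the real-analytic addendum via analyticity of finite CR maps is reasonable in spirit but again rests on the unproved finiteness step; the paper instead quotes the regularity result of \cite{DP2} (and \cite{H2} for $N=2$) directly.
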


In another direction, a further consequence of Theorem~\ref{t:main1} is given by the
following.

\begin{thm}\label{t:main2} Let $M$ be a compact  real-analytic CR submanifold
  of $\CN$ that is of finite type  at each of its points. Then there exists a
  positive integer $k$, depending only on $M$, such that  for every
  real-analytic CR submanifold $M'\subset \CN$ of the same  dimension as that
  of $M$ and for every point $p\in M$, local smooth CR finite mappings sending
  a neighbourhood of $p$ in $M$ into $M^\prime$ are uniquely determined by
  their $k$-jet at $p$.  \end{thm}

Theorem~\ref{t:main2} follows
from the conjunction of the
upper-semicontinuity of the integer $\ell_p$ on $p$ in Theorem~\ref{t:main1}, a
well-known result of Diederich-Forn\ae ss \cite{DF2} stating that compact
real-analytic CR submanifolds of $\CN$ do necessarily not contain any analytic
disc and hence are essentially finite (see e.g.\  \cite{BERbook}) and the
combination of the regularity result due to Meylan \cite{Me1} with the recent
transversality result due to Ebenfelt-Rothschild \cite{ER1}. In the case of
local CR diffeomorphisms, Theorem~\ref{t:main2} was already obtained by the
authors in \cite{LM2}.

When both manifolds $M$ and $M'$ are compact hypersurfaces in
Theorem~\ref{t:main2}, we have the following neater
statement as an immediate consequence of Corollary~\ref{c:cor1}.

 \begin{cor}\label{c:corol} Let $M,M'\subset \CN$ be compact real-analytic
   hypersurfaces. Then there exists a positive integer $k$ depending only on
   $M$, such that for every point $p\in M$,
   local smooth CR mappings sending a
   neighbourhood of $p$ in $M$ into $M^\prime$
   are uniquely determined by their
   $k$-jet at $p$.
 \end{cor}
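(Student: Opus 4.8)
The plan is to deduce this from Corollary~\ref{c:cor1} together with a compactness argument. First I would observe that a compact real-analytic hypersurface $M \subset \CN$ is automatically of D'Angelo finite type at each of its points: this follows from the Diederich--Forn\ae ss theorem \cite{DF2}, which asserts that a compact real-analytic CR submanifold of $\CN$ contains no nontrivial analytic disc, and for a real hypersurface the absence of a local complex curve in $M$ through $p$ is equivalent to D'Angelo finite type at $p$ (see e.g.\ \cite{BERbook,DAfintyp}). Hence both $M$ and $M'$ satisfy the hypotheses of Corollary~\ref{c:cor1} at every point.

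Next, fix $M'$ and apply Corollary~\ref{c:cor1}: for every $p \in M$ there is an integer $\ell(M,p)$, depending \emph{upper-semicontinuously} on $p$, such that any pair of germs of smooth CR mappings $(M,p)\to M'$ agreeing to order $\ell(M,p)$ at $p$ agree to infinite order, and—since $M,M'$ are real-analytic—are therefore identical as germs. The key point is that the integer $\ell(M,p)$ produced by Corollary~\ref{c:cor1} depends only on the source data $(M,p)$ and not on $M'$; this is exactly the feature recorded in Theorem~\ref{t:main1} and inherited by Corollary~\ref{c:cor1}. Now $M$ is compact and $p \mapsto \ell(M,p)$ is upper-semicontinuous and integer-valued, so it attains a finite maximum on $M$; call it $k$. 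This $k$ depends only on $M$. Then for every $p \in M$ and every pair of germs of smooth CR mappings $h_1,h_2 \colon (M,p)\to M'$ with $j_p^k h_1 = j_p^k h_2$, we have in particular $j_p^{\ell(M,p)} h_1 = j_p^{\ell(M,p)} h_2$ since $\ell(M,p)\le k$, and Corollary~\ref{c:cor1} gives $h_1 = h_2$ as germs, hence on a neighbourhood of $p$ in $M$.

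The only mild subtlety—and the step I would be most careful about—is the upper-semicontinuity claim: one must check that the integer in Corollary~\ref{c:cor1} can genuinely be chosen upper-semicontinuously in $p$ \emph{uniformly in the target} $M'$, so that the maximum over the compact source $M$ is meaningful and target-independent. This is precisely what the statement of Theorem~\ref{t:main1} asserts (the jet order $\ell_p$ there depends only on $(M,p)$, not on $M'$), and Corollary~\ref{c:cor1} is the hypersurface specialization of it combined with the D'Angelo-finite-type $\Leftrightarrow$ essential finiteness $+$ finite type equivalence for hypersurfaces; so no extra work beyond invoking these is needed. Everything else is the routine extraction of a maximum of an upper-semicontinuous function on a compact set.
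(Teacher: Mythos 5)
Your proposal is correct and takes essentially the same route as the paper: Corollary~\ref{c:corol} is obtained as an immediate consequence of Corollary~\ref{c:cor1}, with the Diederich--Forn\ae ss theorem guaranteeing D'Angelo finite type at every point of the compact real-analytic hypersurfaces, and the target-independence plus upper-semicontinuity of $\ell(M,p)$ combined with compactness of $M$ yielding the uniform jet order $k$.
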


We note that the conclusion of Corollary~\ref{c:corol} does not hold (even for 
automorphisms) if the 
compactness assumption is dropped, as the following example shows.

\begin{exa}\label{exa:diff}\footnote{This is an adaptation of an example which 
	 appeared in \cite{ELZ1}, which grew out of a discussion at the workshop ``Complexity of mappings in CR-geometry'' at the American Institute of Mathematics in September 2006. The authors would like to take 
	this opportunity to thank the Institute for its hospitality.}
	Let $\Phi\colon\C\to\C$ be a non-zero entire function satisfying
	\[ \dopvar{^j\Phi}{z^j} (n) = 0, \quad j\leq n, \quad n \in \N,\]
	and consider the hypersurface 
	$ M \subset \C_{z_1,z_2,w}^3$ given by the equation
	\[ \imag w = \real \left(z_1 \overline{\Phi (z_2)}\right). \] 
	Then the entire automorphism 
	\[ H(z_1,z_2,w) = (z_1 + i\, \Phi (z_2), z_2, w)\]
	sends $M$ into itself, agrees with the identity up to order $n$ at each point $(0,n,0)$, $n\in\N$, but is not
	equal to the identity. This example shows that despite of the fact that local holomorphic automorphisms of $M$ are uniquely determined by a finite jet at every arbitrary fixed point of $M$ (since $M$ is holomorphically nondegenerate and of finite type, see \cite{BMR1}), a uniform bound for the jet order valid at all  points of the manifold need not exist in general, unless additional assumptions (like compactness) are added. Note also that in view of the results in \cite{ELZ1}, the above phenomenon
	cannot happen in $\C^2$. 
\end{exa}

By a classical result of H.\ Cartan \cite{Hca}, given any bounded domain $\Omega\subset \CN$,
any holomorphic self-map of $\Omega$ agreeing with the identity mapping up to order one at
any fixed point of $\Omega$ must be the identity mapping. Our last application provides a
new boundary version of this uniqueness theorem
for proper holomorphic mappings.

\begin{cor}\label{c:nbv}  Let $\Omega\subset\CN$ be a bounded domain with
  smooth real-analytic boundary.  Then there exists an integer $k$, depending
  only on the boundary $\partial \Omega$, such that for every other bounded
  domain $\Omega'$ with smooth real-analytic boundary, if $H_1,H_2\colon \Omega
  \to \Omega'$ are two proper holomorphic maps extending smoothly up to
  $\partial \Omega$ near some point $p\in \partial \Omega$ which satisfy
  $H_1(z) = H_2(z)+o ( \left|z - p \right|^k)$, then necessarily $H_1=H_2$.
\end{cor}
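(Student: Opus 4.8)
The plan is to reduce the boundary Cartan-type statement to an application of Corollary~\ref{c:corol}, via two classical ingredients about proper holomorphic maps between domains with real-analytic boundary. First, since $\Omega$ has smooth real-analytic boundary, $M := \partial\Omega$ is a compact real-analytic hypersurface in $\CN$; similarly $M' := \partial\Omega'$ is a compact real-analytic hypersurface. I would fix once and for all the integer $k$ provided by Corollary~\ref{c:corol} applied to $M$; note that this $k$ depends only on $\partial\Omega$, as required, and in particular is independent of $\Omega'$. The first key step is then the boundary-regularity/mapping statement: a proper holomorphic map $H\colon\Omega\to\Omega'$ that extends smoothly up to $\partial\Omega$ near $p$ restricts, near $p$, to a smooth CR mapping of a neighbourhood of $p$ in $M$ into $M'=\partial\Omega'$. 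This is the standard fact that a proper map between bounded domains pushes the boundary to the boundary (so $H(M\cap U)\subset\overline{\Omega'}$, and by the maximum principle / openness $H$ cannot send boundary points into the interior), combined with the observation that the smooth extension of a holomorphic map automatically satisfies the tangential Cauchy--Riemann equations on $M$. Hence $h_j := H_j|_{M}$ near $p$ are germs of smooth CR mappings $(M,p)\to M'$ for $j=1,2$.

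The second step is to translate the hypothesis $H_1(z)=H_2(z)+o(|z-p|^k)$ into equality of $k$-jets at $p$ of the boundary restrictions $h_1,h_2$. Since $H_1,H_2$ extend $C^\infty$-smoothly near $p$ on $\overline\Omega$, the estimate $H_1(z)-H_2(z)=o(|z-p|^k)$ for $z$ in (a neighbourhood of $p$ in) $\Omega$ forces, by continuity of all partial derivatives up to order $k$ of the extensions at $p$ and Taylor's theorem, that all derivatives of $H_1-H_2$ up to order $k$ vanish at $p$; restricting to $M$, this gives $j_p^k h_1 = j_p^k h_2$. (One uses here that $p$ is an accumulation point of $\Omega$ approached non-tangentially from inside, so the vanishing of the $o(|z-p|^k)$ Taylor remainder along $\Omega$ detects all the coefficients of the full $k$-jet of the smooth extension.) Now Corollary~\ref{c:corol} applies verbatim: with the fixed $k$ above, $j_p^k h_1 = j_p^k h_2$ implies $h_1 = h_2$ as germs of CR mappings at $p$, i.e. $H_1 = H_2$ on $M$ near $p$.

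Finally I would upgrade the boundary equality $H_1|_{M}=H_2|_{M}$ near $p$ to the global equality $H_1=H_2$ on $\Omega$. For this, note that $M$ is a generic (indeed, a hypersurface) real-analytic submanifold, so it is not contained in any proper complex-analytic subvariety of $\CN$; a holomorphic function on $\Omega$ (such as a component of $H_1-H_2$) that extends continuously to $\overline\Omega$ and vanishes on a relatively open piece of $\partial\Omega$ must vanish identically on the connected domain $\Omega$ — this is again an application of the uniqueness theorem for holomorphic functions together with the fact that a nonempty open subset of the generic manifold $M$ is a uniqueness set for functions holomorphic on one side and smooth up to the boundary (boundary uniqueness / the Privalov-type argument, or simply the edge-of-the-wedge observation that $H_1-H_2$ extends holomorphically across this piece of $M$ by zero). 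Since $\Omega$ is connected, this gives $H_1\equiv H_2$ on $\Omega$, completing the proof.

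I expect the main obstacle, if any, to be purely bookkeeping in Step~2: one must be careful that the hypothesis is phrased as $o(|z-p|^k)$ along the \emph{domain} $\Omega$ (not on a full neighbourhood in $\CN$), so one should verify that non-tangential approach to $p$ within $\Omega$ — available since $\partial\Omega$ is smooth, hence $\Omega$ contains an interior cone at $p$ — suffices to recover the vanishing of the full $k$-jet of the smooth extension of $H_1-H_2$ at $p$, and hence of its restriction to $M$. All the remaining ingredients (boundary goes to boundary for proper maps, tangential CR equations for smooth boundary extensions of holomorphic maps, and the boundary uniqueness theorem on a generic submanifold) are classical; the real content of the corollary is supplied entirely by Corollary~\ref{c:corol}.
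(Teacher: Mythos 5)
Your proposal is correct and follows essentially the same route as the paper, which simply notes that the corollary is an immediate consequence of Corollary~\ref{c:corol}; your write-up just supplies the standard reduction (boundary-to-boundary for proper maps, the cone argument converting the $o(|z-p|^k)$ hypothesis into equality of $k$-jets of the boundary restrictions, and boundary uniqueness plus the identity theorem to pass from $h_1=h_2$ on a piece of $\partial\Omega$ to $H_1=H_2$ on $\Omega$). The only cosmetic point is that boundary-to-boundary follows directly from properness rather than from the maximum principle, as you in fact also note.
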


Corollary~\ref{c:nbv} follows immediately from
Corollary~\ref{c:corol}. The authors do not know any other analog
of H. Cartan's uniqueness theorem for arbitrary pairs of proper
maps. A weaker version of Corollary~\ref{c:nbv} appears in the
authors' paper \cite{LM2} (namely when $\Omega=\Omega'$ and one of
the map is assumed to be the identity mapping).  For other related
results, we refer the reader to the papers  \cite{BK,H5,H6}.

The paper is organized as follows. In the next section, we recall the basic
concepts concerning  formal generic submanifolds and mappings which allow us to
state a general finite jet determination result (Theorem~\ref{t:main}) in such
a context for so-called CR-transversal mappings,
and from which Theorem~\ref{t:main1} will  be derived. In \S
\ref{s:first} we give the proof of Theorem~\ref{t:main} which involves the
Segre set machinery recently developed by Baouendi, Ebenfelt and Rothschild
\cite{BER4,BER5,BERbook}. In order to be able to compare arbitrary
pairs of mappings, we have to derive a number of new properties of the mappings under
consideration, when restricted to the first Segre set.
As a byproduct of the proof, we also obtain a new sufficient condition for a CR-transversal map
to be an automorphism (Corollary~\ref{c:byproduct}). The last part of the proof, concerned with the iteration
to higher order Segre sets, is established
by a careful analysis of standard reflection identities. During the course of the proof, we also have to keep track
of the jet order needed to get the determination of the maps so that this order behaves upper-semicontinuously on base points when
 applied at varying points of smooth generic submanifolds. This is done in the formal setting by defining
 new numerical invariants associated to any formal generic submanifold; such invariants are used to provide
  an explicit jet order that behaves upper-semicontinuously on the source manifold when this latter is subject to arbitrary continuous deformations.  The proofs of the results mentioned
in the introduction are then derived from Theorem~\ref{t:main} in \S\ref{s:end}.

\section{Formal submanifolds and mappings}\label{s:formal}

\subsection{Basic definitions}\label{ss:basicdef}

For $x=(x_1,\ldots,x_k)\in \C^k$, we denote by $\C \dbl x \dbr$
the ring of formal power series in $x$ and by $\C\{x\}$ the
subring of convergent ones. If $I\subset \C \dbl x\dbr$ is an
ideal and $F:(\C_x^k,0)\to (\C^{k'}_{x'},0)$ is a formal map, then
we define the {\em pushforward} $F_*(I)$ of $I$ to be the ideal in
$\C \dbl x'\dbr$, $x'\in \C^{k'}$, $F_*(I):=\{h\in \C \dbl
x'\dbr:h\circ F\in I\}$. We also call the {\em generic rank} of
$F$ and denote by $\Rk F$ the rank of the Jacobian matrix
${\partial F}/{\partial x}$ regarded as a $\C \dbl x\dbr$-linear
mapping $(\C \dbl x\dbr)^k\to (\C \dbl x\dbr)^{k'}$. Hence $\Rk F$
is the largest integer $r$ such that there is an $r\times r$ minor
of the matrix ${\partial F}/{\partial x}$ which is not  0 as a
formal power series in $x$. Note that if $F$ is convergent, then
$\Rk F$ is the usual generic rank of the map $F$. In addition, for
any complex-valued formal power series $h(x)$, we  denote by $\bar
h (x)$ the formal power series obtained from $h$ by taking complex
conjugates of the coefficients. We also denote by ${\rm ord}\,
h\in \N\cup \{+\infty\}$ the order of $h$ i.e. the smallest
integer $r$ such that $\partial^{\alpha}h(0)=0$ for all $\alpha
\in \N^k$ with $|\alpha|\leq r-1$ and for which
$\partial^{\beta}h(0)\not =0$ for some $\beta \in \N^k$ with
$|\beta|=r$ (if $h\equiv 0$, we set ${\rm ord}\, h=+\infty$).
Moreover, if $S=S(x,x')\in \C \dbl x,x'\dbr$, we write ${\rm
ord}_{x}\,S$ to denote the order of $S$ viewed as a power series
in $x$ with coefficients in the ring $\C \dbl x'\dbr$.

\subsection{Formal generic submanifolds and normal
coordinates}\label{ss:second}

For $(Z,\zeta)\in \C^N \times \C^N$, we define the involution
$\sigma : \C\dbl Z,\zeta\dbr \to \C\dbl Z,\zeta\dbr$ by $\sigma
(f)(Z,\zeta):=\bar{f}(\zeta,Z)$.  Let $r=(r_1,\ldots, r_d)\in
\left (\C \dbl Z,\zeta \dbr\right)^d$ such that $r$ {\em is
invariant under the involution} $\sigma$. Such an $r$ is said to
define a {\em formal generic submanifold} through the origin, which
we denote by $M$, if $r(0)=0$ and the vectors $\partial_Zr_1
(0),\ldots,\partial_Zr_d(0)$ are linearly independent over $\C$.
In this case, the number $n:=N-d$ is called the {\em CR dimension}
of $M$, the number $2N-d$ the {\em dimension} of $M$ and the
number $d$ the {\em codimension} of $M$. Throughout the paper, 
we shall freely write $M\subset \CN$. The complex
space of vectors of $T_0\CN$ which are in the kernel of the
complex linear map $\partial_Zr (0)$ will be denoted by
$T_0^{1,0}M$. Furthermore, in the case $d=1$, a formal generic
submanifold will be called a {\em formal real hypersurface}. These
definitions are justified by the fact that, on one hand, if $r\in
\left (\C\{Z,\zeta\}\right)^d$ defines a formal generic submanifold
then the set $\{Z\in \CN: r(Z,\bar{Z})=0\}$ is a germ through the
origin in $\CN$ of a real-analytic generic submanifold and
$T^{1,0}_0M$ is the usual space of $(1,0)$ tangent vectors of $M$
at the origin (see e.g.\ \cite{BERbook}). On the other hand, if
$\Sigma$ is a germ through the origin of a smooth generic
submanifold of $\CN$, then the complexified Taylor series of a
local smooth vector-valued defining function for $\Sigma$ near $0$
gives rise to a formal generic submanifold as defined above. These
observations will be used to derive the results mentioned in the
introduction from the corresponding results for formal generic
submanifolds given in \S \ref{s:further}.

Given a topological space $T$, by a {\em continuous family of
formal generic submanifolds} $(M_t)_{t\in T}$, we mean the data of a
formal power series mapping
$r(Z,\zeta;t)=(r_1(Z,\zeta;t),\ldots,r_{d}(Z,\zeta;t))$ in
$(Z,\zeta)$ with coefficients that are continuous functions of $t$
and such that for each $t\in T$, $M_{t}$ defines a formal submanifold
as described above. When $T$ is furthermore a smooth submanifold and
the coefficients depend smoothly on $t$, we say that $(M_t)_{t\in
T}$ is a {\em smooth family of formal generic submanifolds}. An
important example (for this paper) of such a family is given when
considering a smooth generic submanifold of $\CN$ near some point
$p_0\in \CN$ and allowing the base point to vary. In such a case,
the smooth family of formal submanifolds is just obtained by
considering a smooth defining function
$\rho=(\rho_1,\ldots,\rho_d)$ for $M$ near $p_0$ and by setting
$r(Z,\zeta;p)$ to be the complexified Taylor series mapping of
$\rho$ at the point $p$, for $p$ sufficiently close to $p_0$.

Given a family ${\mathcal E}$ of formal generic submanifolds of $\CN$, 
a numerical invariant $\iota$ attached to the family ${\mathcal E}$ 
 and  a submanifold $M\in {\mathcal E}$, we will further say that  $\iota (M)$ {\em depends upper-semicontinuously on 
continuous deformations of $M$} if for every continuous family 
of formal generic submanifolds $(M_t)_{t\in T}$ with
$M_{t_0} = M$ for some $t_0\in T$, there exists a neighbourhood $\omega$ of $t_0$ in $T$ such that
$M_t\in {\mathcal E}$ for all $t\in \omega$ and  such that the function
$\omega\ni t \mapsto  \iota (M_t)$ is upper-semicontinuous.

Throughout this paper, it will be convenient to use (formal) {\em
normal coordinates} associated to any formal generic submanifold $M$
of $\CN$ of codimension $d$ (see e.g.\ \cite{BERbook}). They are
given as follows. There exists a formal change of coordinates in
$\CN\times \CN$ of the form $(Z,\zeta)=(Z(z,w),\bar
Z(\chi,\tau))$, where $Z=Z(z,w)$ is a formal change of coordinates
in $\CN$ and where
$(z,\chi)=(z_1,\ldots,z_n,\chi_1,\ldots,\chi_n)\in \C^n\times
\C^n$, $(w,\tau)=(w_1,\ldots,w_d,\tau_1,\ldots,\tau_d)\in \C^d
\times \C^d$ so that $M$ is defined through the following defining
equations
\begin{equation}\label{e:coord}
r((z,w),(\chi,\tau))=w-Q(z,\chi,\tau),
\end{equation}
where $Q=(Q^1,\ldots,Q^d)\in \left(\C \dbl z,\chi,\tau
\dbr\right)^d$ satisfies
\begin{equation}\label{e:normality}
Q^j(0,\chi,\tau)=Q^j(z,0,\tau)=\tau_j,\ j=1,\ldots,d.
\end{equation}
Furthermore if $(M_t)_{t\in T}$ is a continuous (resp.\ smooth)
family of formal generic submanifolds with $M=M_{t_0}$ for some
$t_0\in T$, then one may construct normal coordinates so that the
formal power series mapping $Q=Q(z,\chi,\tau;t)$  depends
continuously (resp.\ smoothly) on $t$ for $t$ sufficiently close
to $t_0$.

\subsection{Formal mappings}\label{ss:third}
Let $r,r'\in \left(\C\dbl Z,\zeta\dbr\right)^d\times \left(\C\dbl
Z,\zeta\dbr\right)^d$ define two formal generic submanifolds $M$ and
$M'$ respectively of the same dimension and let ${\mathcal I}(M)$
(resp.\ ${\mathcal I}(M')$) be the ideal generated by $r$ (resp.\
by $r'$). Throughout the paper, given a formal power series
mapping $\varphi$ with components in the ring $ \C \dbl
Z,\zeta\dbr$, we  write
$\varphi (Z,\zeta)=0$ for $(Z,\zeta)\in {\mathcal M}$ to mean that
each component of $\varphi$ belongs to the ideal $ {\mathcal
I}(M)$. Let now $H\colon (\CN,0)\to (\CN,0)$ be a formal
holomorphic map.
For every integer $k$, the $k$-jet of $H$, denoted by $j_0^kH$, is simply the usual $k$-jet at $0$
of $H$. We associate to the map $H$ another formal map
${\mathcal H}\colon (\CN \times \CN,0)\to (\CN\times \CN,0)$
defined by ${\mathcal H}(Z,\zeta)=(H(Z),\bar{H}(\zeta))$. We say
that {\em $H$ sends $M$ into $M'$} if ${\mathcal I}(M')\subset
{\mathcal H}_*({\mathcal I}(M))$ and write $H(M)\subset M'$. Note
that if $M,M'$ are germs through the origin of real-analytic
generic submanifolds of $\CN$ and $H$ is convergent, then
$H(M)\subset M'$ is equivalent to say that $H$ sends a
neighborhood of $0$ in $M$ into $M'$. On the other hand, observe
that if $M,M'$ are merely smooth generic submanifolds through the
origin and $h\colon (M,0)\to (M',0)$ is a germ of a smooth CR
mapping, then there exists a unique (see e.g.\ \cite{BERbook})
formal (holomorphic) map $H\colon (\CN,0)\to (\CN,0)$ extending
the Taylor series of $h$ at $0$ (in any local coordinate system).
Then the obtained formal map $H$ sends $M$ into $M'$ in the sense
defined above when $M$ and $M'$ are viewed as formal generic
submanifolds.

A formal map $H\colon (\CN,0)\to (\CN,0)$ sending $M$ into $M'$
where $M,M'$ are formal generic submanifolds of $\CN$ is called
{\em CR-transversal} if
\begin{equation}\label{e:crtransverse}
T_0^{1,0}M'+dH(T_0\CN)=T_0\CN,
\end{equation}
where $dH$ denotes the differential of $H$ (at $0$). We say that
$H$ is a {\em finite} map if the ideal generated by the components
of the map $H$ is of finite codimension in the ring $\C \dbl
Z\dbr$. If $M,M'$ are merely smooth generic submanifolds through
the origin and $h\colon (M,0)\to (M',0)$ is a germ of a smooth CR
mapping, we say that $h$ is CR-transversal (resp.\ finite) if its
unique associated formal (holomorphic) power series mapping
extension is CR-transversal (resp.\ finite).

Finally, given $M,M'$ two real-analytic CR submanifolds of $\CN$, $h\colon M\to M'$
  a smooth CR mapping, $k$ a positive integer and $p$ a point in $M$, we will denote by $j_p^kh$ 
  the usual $k$-jet of $h$ at $p$. Note that 
there exists a (not necessarily unique) 
formal holomorphic map $(\CN,p)\to (\CN,h(p))$ extending the power series of $h$ at $p$ whose restriction to the intrinsic complexification of $M$ at $p$ is unique (see e.g.\ \cite{BERbook}). We then say that $h$ is a finite CR mapping if the above restricted map is a finite formal holomorphic map.

\subsection{Nondegeneracy conditions for formal submanifolds and numerical invariants}\label{ss:nondeg}
A formal vector field $V$ in $\CN \times \CN$ is a $\C$-linear
derivation of the ring $\C \dbl Z,\zeta \dbr$. If $M$ is a formal
generic submanifold of $\CN$, we say that $V$ is tangent to $M$ if
$V(f)\in {\mathcal I}(M)$ for every $f\in {\mathcal I}(M)$.

A formal (1,0)-vector field $X$ in $\C_Z^N \times \C_{\zeta}^N$ is
of the form
\begin{equation}\label{1,0}
X=\sum_{j=1}^Na_j(Z,\zeta)\frac{\partial}{\partial Z_j},\quad
a_j(Z,\zeta)\in \C \dbl Z,\zeta\dbr,\ j=1,\ldots,N.
\end{equation}
Similarly, a (0,1)-vector field $Y$ in $\C_{Z}^N \times
\C_{\zeta}^N$ is given by
\begin{equation}\label{0,1}
Y=\sum_{j=1}^Nb_j(Z,\zeta)\frac{\partial}{\partial \zeta_j},\quad
b_j(Z,\zeta)\in \C \dbl Z,\zeta\dbr,\ j=1,\ldots,N.
\end{equation}

For a formal generic submanifold $M$ of $\C^N$ of codimension $d$, we
denote by $\mathfrak g_{M}$ the Lie algebra generated by the
formal (1,0) and (0,1) vector fields tangent to $M$. The formal
generic submanifold $M$ is said to be {\em of finite type} if the
dimension of $\mathfrak g_{M}(0)$ over $\C$ is $2N-d$, where
$\mathfrak g_{M}(0)$ is the vector space obtained by evaluating
the vector fields in $\mathfrak g_{M}$ at the origin of $\C^{2N}$.
Note that if $M\subset \C^N$ is a smooth generic submanifold
through the origin, then the above definition coincides with the
usual finite type condition due to Kohn \cite{Ko1} and
Bloom-Graham \cite{BG1}.

We now need to introduce a nondegeneracy condition for formal generic submanifolds, which in the real-analytic
case was already defined by the authors in \cite{LM2}. Let therefore $M$ be
a formal generic submanifold of $\CN$ of codimension $d$ and choose normal coordinates as in \S \ref{ss:second}. For every $\alpha \in \N^n$, we set $\Theta_\alpha
(\chi)=(\Theta^1_\alpha (\chi),\ldots,\Theta^d_\alpha
(\chi)):=(Q^1_{z^\alpha}(0,\chi,0),\ldots,Q^d_{z^\alpha}(0,\chi,0))$.

\begin{defn}
We say that a formal submanifold $M$ defined in normal coordinates as
above is in the class ${\mathcal C}$ if for $k$ large enough the
generic rank of the formal $($holomorphic$)$ map $\chi \mapsto
\left(\Theta_\alpha (\chi)\right)_{|\alpha|\leq k}$ is equal to $n$. If this is the
case, we denote by $\kappa_M$ the smallest integer $k$ for which
the rank condition holds.
\end{defn}

If the formal submanifold $M\not \in {\mathcal C}$, we set $\kappa_M=+\infty$. 
In \S \ref{s:first}, we will show that for a formal submanifold $M$, being
in the class ${\mathcal C}$ is independent of the choice of normal
coordinates. Further, it will also be shown that $\kappa_M \in \N \cup \{+\infty\}$ 
is invariantly attached to $M$ (see Corollary
\ref{c:blabla}). Note that if $(M_t)_{t\in T}$ is a continous family of
formal generic submanifolds (parametrized by some topological space $T$) such that $M_{t_0}=M$ for some $t_0\in T$
and $M\in {\mathcal C}$, then there exists a neighbourhood $\omega$ of $t_0$ in $T$ such that $M_t\in {\mathcal C}$
for all $t\in \omega$ and furthermore the map $\omega \ni t\mapsto \kappa_{M_t}$ is clearly upper-semicontinuous. This remark is useful to keep in mind during the proof of Theorem~\ref{t:main}  below.
Note also that the definition of the class ${\mathcal C}$ given here coincides
with that given in \cite{LM2} in the real-analytic case. We therefore refer the reader to
the latter paper for further details on that class in the
real-analytic case.  We only note here that several comparison results between the class ${\mathcal C}$
and other classes of generic submanifolds still hold in the formal category. For instance, recall that a formal
manifold is said to be {\em essentially finite} if the formal holomorphic map $\chi \mapsto
\left(\Theta_\alpha (\chi)\right)_{|\alpha|\leq k}$ is finite
for $k$ large enough. It is therefore clear that if $M$ is essentially finite, then $M\in {\mathcal C}$. As in the real-analytic case, there are also other classes of formal submanifolds that are not essentially finite  and that still belong to the class ${\mathcal C}$. We leave the interested reader to mimic in the formal setting what has been done in the real-analytic case in \cite{LM2}.

If $M$ is a smooth generic submanifold of $\CN$ and $p\in M$, we
say that $(M,p)$ is in the class ${\mathcal C}$ (resp. essentially
finite) if the formal generic submanifold associated to $(M,p)$ (as
explained in \S \ref{ss:second}) is in the class ${\mathcal C}$
(resp.\ is essentially finite).

For every formal submanifold $M\subset \CN$, we need to
 define another numerical quantity that will be
used to give an explicit bound on the number of jets needed in
Theorem \ref{t:main}.  Given a choice of normal coordinates 
$Z=(z,w)$ for $M$, we set for any  
$n$-tuple of multiindeces 
$\ntal := (\alpha^{(1)},\dots
\alpha^{(n)})$, $\alpha^{(j)}\in\Nn$, and any $n$-tuple 
of integers $\nts := (s_1,\dots,s_n)\in \left\{ 
1,\dots,d\right\}^n$
\begin{equation}
  D_M^Z (\ntal,\nts ) =
  \det 
  \begin{pmatrix}
    \dopvar{\Theta^{s_1}_{\alpha^{(1)}}}{\chi_1} & \dots &
    \dopvar{\Theta^{s_1}_{\alpha^{(1)}}}{\chi_n} \\
    \vdots & & \vdots \\
    \dopvar{\Theta^{s_n}_{\alpha^{(n)}}}{\chi_1} & \dots &
    \dopvar{\Theta^{s_n}_{\alpha^{(n)}}}{\chi_n} \\
  \end{pmatrix}.
  \label{e:Ddef}
\end{equation}
Let us write $\maxlen{\ntal} := \max \left\{ 
|\alpha^{(j)}| \colon 1\leq j\leq n\right\}$. We now
define for every integer $k\geq 1$
\begin{equation}\label{e:definnu}
\nu^Z_M (k) :=\inf \left\{ \ord D_M^Z
(\ntal,\nts) \colon \maxlen{\ntal}\leq k \right \}\in \N \cup \{+\infty\}.
\end{equation}
Note that 
for a general formal submanifold $M$, the numerical quantity 
$\nu^Z_M(k)$ depends {\em a priori} on a choice of normal coordinates
for $M$; it will be shown in \S\ref{ss:proptrans} that $\nu^Z_M (k)$ is 
in fact independent of such a choice, and thus 
is a biholomorphic invariant of $M$. 
In view of this result, we will simply write $\nu_M (k)$ for $\nu_M^Z (k)$ for every $k$. Observe also 
that  if $M\in {\mathcal C}$ then for all $k\geq \kappa_M$, $\nu_M(k)<+\infty$.

 We also define the following quantity 
\begin{equation}\label{e:nuinftdef}
  \nu_M (\infty) := \lim_{k\to\infty} \nu_M (k) = \inf_{k \in\N} \nu_{M} (k) \in \N \cup \{+\infty\},
\end{equation}
and notice that $\nu_M (\infty) = 0$ if and only if for some $k$, the 
map $\chi\mapsto \left( \Theta_{\alpha} (\chi) \right)_{|\alpha|\leq k}$
is immersive; this is equivalent to $M$ being {\em finitely nondegenerate} (for
other possible ways of expressing this condition, see e.g. \cite{BERbook}).

Given the invariance of $\nu_M(k)$ for each $k$, it is also easy to see that if 
$(M_t)_{t\in T}$ is a continuous family of generic submanifolds, 
then for every $k\in\N^*\cup \{\infty\}$, the mappings  $T\ni t\mapsto \kappa_{M_t}$ and  
$T\ni t\mapsto \nu_{M_t}(k)$ are clearly upper-semicontinuous. Hence,
the numerical quantities $\kappa_M$ and $\nu_M (k)$ for $k\in \N^* \cup \{\infty\}$ 
depend upper-semicontinuously on continuous deformations of $M$. This fact has also to be kept in mind 
during the proof of Theorem \ref{t:main} below.

\subsection{Finite type and Segre sets mappings}\label{ss:segremaps}
We here briefly recall the definition of the Segre sets mappings
associated to any formal generic submanifold as well as the finite
type criterion in terms of these mappings due to Baouendi,
Ebenfelt and Rothschild \cite{BER4}.

Let $M$ be a formal submanifold of codimension $d$ in $\CN$ given for simplicity in
normal coordinates as in \S \ref{ss:second}. Then for
every integer $j\geq 1$, we define a formal mapping $v^j\colon
(\C^{nj},0)\to (\CN,0)$ called the Segre set mapping of order $j$
as follows. We first set $v^1(t^1)=(t^1,0)$ and define inductively
the $v^{j}$ by the formula
\begin{equation}\label{e:formula}
v^{j+1}(t^1,\ldots,t^{j+1})=(t^{j+1},Q(t^{j+1},\bar
v^{j}(t^1,\ldots,t^j))).
\end{equation}
Here and throughout the paper, each $t^k\in \C^n$ and we shall
also use the notation $t^{[j]}=(t^1,\ldots,t^j)$ for brevity. Note
that for every formal power series mapping $h\in \C \dbl
Z,\zeta\dbr$ such that $h(Z,\zeta)=0$ for $(Z,\zeta)\in {\mathcal
M}$, one has the identities $h(v^{j+1},\bar v^{j})\equiv 0$ in the ring $\C \dbl
t^1,\ldots,t^{j+1} \dbr$ and $h(v^1(t^1),0)\equiv 0$ in $\C \dbl t^1\dbr$.

The following well-known characterization of finite type
for a formal generic submanifold in terms of its Segre sets mappings
will be useful in the conclusion of the proof of Theorem
\ref{t:main}.

\begin{thm}\label{t:ber} \cite{BER4} Let $M$ be a formal generic submanifold of $\CN$. Then $M$ is of finite type
if and only if there exists an integer $1\leq m\leq (d+1)$ such
that $\Rk v^k=N$ for all $k\geq m$.
\end{thm}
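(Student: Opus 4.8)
\textbf{Proof plan for Theorem~\ref{t:ber}.}

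The statement is the Baouendi--Ebenfelt--Rothschild characterization of finite type via the Segre set mappings, so the plan is to reduce it to the known link between the rank of the Segre set mappings and the Lie algebra $\mathfrak g_M$, and then invoke the definition of finite type. First I would fix normal coordinates $Z=(z,w)$ as in \S\ref{ss:second}, so that $M$ is given by $w=Q(z,\chi,\tau)$ with the normalization \eqref{e:normality}, and write out the Segre set mappings $v^j$ via the recursion \eqref{e:formula}. The crucial observation is that each $v^{j+1}$ is obtained from $\bar v^j$ by applying the map $(z,\chi,\tau)\mapsto (z,Q(z,\chi,\tau))$, so the generic ranks $\Rk v^j$ form a nondecreasing sequence in the sense that the image of $v^{j+1}$ contains (formally) the image of $\bar v^{j}$ composed with the ``$Q$-flow''; more precisely, one shows $\Rk v^{j+1}\geq \Rk v^j$ and that once $\Rk v^{k}=\Rk v^{k+1}$, the rank stabilizes for all larger indices. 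This stabilization step is the technical heart and would be done by analyzing the Jacobian of \eqref{e:formula}: differentiating in the new block $t^{j+1}$ always contributes the full $n$ directions coming from the identity block $t^{j+1}\mapsto t^{j+1}$, while differentiating in $t^{[j]}$ reproduces, through $\bar v^j$ and the chain rule, exactly the tangent directions already present at stage $j$; hence no new directions appear after the first repetition.

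Second, I would connect $\Rk v^k$ to the iterated Lie brackets of the $(1,0)$ and $(0,1)$ vector fields tangent to $M$. The standard fact (which I would either cite from \cite{BER4,BERbook} or reprove by the usual commutator-flow argument) is that the dimension of $\mathfrak g_M(0)$ equals $n + \Rk v^{m}$ evaluated at the generic point for $m$ large, or equivalently that finite type ($\dim_{\C}\mathfrak g_M(0)=2N-d$) holds precisely when the Segre set mappings eventually become submersive onto $\CN$, i.e.\ $\Rk v^k=N$ for $k$ large. Concretely: $v^1$ parametrizes (a piece of) the complexified first Segre variety $\{w=0\}$, which carries the CR directions $\partial/\partial z_j$; passing from $v^j$ to $v^{j+1}$ corresponds to flowing along the conjugated vector fields, so $\image dv^{k}(0\text{-generic})$ spans the distribution obtained by iterating brackets $k-1$ times. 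Finite type is exactly the statement that these iterated brackets fill out all of $T_0\C^{2N}$ modulo the antiholomorphic (resp.\ holomorphic) part, which translates to $\Rk v^k=N$.

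Third, to pin down the bound $1\le m\le d+1$: since $\Rk v^1=n=N-d$ and the rank strictly increases at each stage until it stabilizes, and it cannot exceed $N$, there can be at most $d$ strict increases; hence stabilization occurs by stage $m\le d+1$. Combined with the stabilization step from the first paragraph, this gives $\Rk v^k=\Rk v^{d+1}$ for all $k\ge d+1$, and finite type is equivalent to this common value being $N$. I expect the main obstacle to be the rank-stabilization argument: one must be careful that the Jacobian of $v^{j+1}$, which involves $\partial Q/\partial z$, $\partial Q/\partial\chi$, $\partial Q/\partial\tau$ composed with $\bar v^j$ and with the conjugated recursion, genuinely contributes no directions beyond those already in $\Rk v^j$ plus the $n$ fresh ones from $t^{j+1}$, and that this holds at a \emph{generic} point (not just at $0$, where all the Segre mappings are highly degenerate). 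This is precisely where the normalization \eqref{e:normality} is used, and where a clean induction on $j$ comparing $\Rk v^{j}$ with $\Rk v^{j+1}$ via the block structure of the Jacobian must be carried out; I would organize it as a lemma stating $\Rk v^{j+1}=N$ as soon as $\Rk v^{j}=N$, and $\Rk v^{j+1}>\Rk v^{j}$ otherwise, from which the theorem follows immediately.
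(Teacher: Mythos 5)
First, note that the paper does not prove Theorem~\ref{t:ber} at all: it is quoted from \cite{BER4} (see also \cite{BERbook}) and used as a black box in the conclusion of the proof of Theorem~\ref{t:main}, so there is no internal proof to compare yours against; what you are sketching is a proof of the Baouendi--Ebenfelt--Rothschild criterion itself. Your overall strategy (monotonicity and stabilization of the generic ranks $\Rk v^j$, identification of the stabilized rank with the dimension of $\mathfrak{g}_M(0)$ minus $n$, and the counting argument $n\le \Rk v^1<\dots$ giving the bound $m\le d+1$) is indeed the shape of the BER argument, and your dimension count $\dim_\C\mathfrak{g}_M(0)=n+\Rk v^m$ for $m$ large is consistent with the statement. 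However, the step you yourself call the technical heart is not established by the argument you propose, and as stated it proves too much. You claim that in the Jacobian of \eqref{e:formula} the derivatives in $t^{[j]}$ ``reproduce, through $\bar v^j$ and the chain rule, exactly the tangent directions already present at stage $j$, hence no new directions appear after the first repetition.'' Taken literally this would force $\Rk v^{k}=\Rk v^{2}$ for all $k\ge 2$, which is false: in codimension $d>1$ the ranks genuinely keep increasing up to stage $d+1$ (this is precisely why the bound in the theorem is $d+1$ and not $2$). The reason the naive block computation fails is that the $t^{[j]}$-derivatives of $v^{j+1}$ are not the old tangent directions but their images under $Q_{(\chi,\tau)}(t^{j+1},\cdot)$ with $t^{j+1}$ varying, and the span of these images can strictly grow; so the implication ``$\Rk v^{j+1}=\Rk v^j$ for one $j$ implies stabilization for all larger indices'' is exactly the nontrivial lemma, and it cannot be read off from the block structure of the Jacobian.

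The second step is also only asserted: the identification of the stabilized Segre rank with $\dim_\C\mathfrak{g}_M(0)$ (equivalently, with the CR orbit of the base point) is the real content of the theorem, and your justification ``passing from $v^j$ to $v^{j+1}$ corresponds to flowing along the conjugated vector fields'' is not available here, because $M$ is merely a \emph{formal} generic submanifold: there are no flows of formal vector fields and no Nagano orbit to integrate, so the real-analytic argument must be replaced by its formal (purely algebraic) analogue, which is precisely what \cite{BER4} supplies. In short, your plan reproduces the skeleton of the known proof but leaves both load-bearing steps (rank stabilization and the orbit/Lie-algebra identification) unproved, and the mechanism you offer for the first one is incorrect; to make this rigorous you would essentially have to redo the formal Segre set theory of \cite{BER4}, which is why the paper simply cites it.
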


\section{Statement of the main result for formal submanifolds}\label{s:further}

We will derive in \S \ref{s:end} the results mentioned in the introduction from the following finite jet determination result for formal mappings between formal submanifolds.

\begin{thm}\label{t:main}
  Let $M\subset\CN$ be a formal generic submanifold of finite type which is
  in the class ${\mathcal C}$. Then there exists an integer $K$ 
  depending only on $M$ satisfying the following properties :
  \begin{enumerate}
    \item[{\rm (i)}] For every formal generic manifold $M'$ of $\CN$ 
      with the same dimension as  $M$, 
      and for any pair  $H_1,H_2\colon (\CN,0)\to (\CN,0)$ 
      of formal CR-transversal holomorphic mappings sending 
      $M$ into $M'$ it holds that if the  $K$-jets of $H_1$ and
      $H_2$ agree, then necessarily $H_1=H_2$.
    \item[{\rm (ii)}] The integer $K$ depends upper-semicontinuously
  on continuous deformations of $M$.\end{enumerate}
\end{thm}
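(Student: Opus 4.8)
The plan is to follow the Segre set iteration scheme of Baouendi--Ebenfelt--Rothschild \cite{BER4,BER5,BERbook}, but to carry out the analysis for an \emph{arbitrary} pair of CR-transversal maps rather than comparing one map to a fixed one. Fix normal coordinates $Z=(z,w)$ for $M$ (and corresponding ones for $M'$) as in \S\ref{ss:second}, and write $\mathcal H_i=(H_i,\bar H_i)$. First I would establish the base step on the first Segre set: restricting the basic reflection identity obtained from $H_i(M)\subset M'$ to the first Segre set mapping $v^1(t^1)=(t^1,0)$, one gets, using CR-transversality, that the ``$w'$-component'' behaviour and the normalization \eqref{e:normality} of $M'$ force the tangential part of $H_i$ along $v^1$ to be controlled by finitely many derivatives of the $\Theta_\alpha$'s. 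Here the determinants $D_M^Z(\ntal,\nts)$ from \eqref{e:Ddef} enter: the class~${\mathcal C}$ hypothesis guarantees that for $|\alpha|\le\kappa_M$ the map $\chi\mapsto(\Theta_\alpha(\chi))$ has generic rank $n$, so some $D_M^Z(\ntal,\nts)\not\equiv0$, and $\nu_M(\kappa_M)<\infty$ measures the order of vanishing one must divide by. The outcome of this step should be that the restriction of $H_i$ to the first Segre set, together with finitely many of its derivatives, is determined by a jet of $H_i$ at $0$ of order bounded by an explicit function of $\kappa_M$, $\nu_M(\kappa_M)$ and $d,N$; in particular $H_1$ and $H_2$ agree to high order along $v^1$ once their $K_0$-jets agree, for a suitable $K_0=K_0(M)$.

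Next I would run the inductive step over the higher Segre sets. Assuming $H_1$ and $H_2$ agree to sufficiently high order along $v^j$ (in the $t^{[j]}$ variables), I would plug $v^{j+1}(t^{[j+1]})=(t^{j+1},Q(t^{j+1},\bar v^j(t^{[j]})))$ into the standard reflection identities relating $H_i\circ v^{j+1}$ and $\bar H_i\circ \bar v^j$, differentiate in the fibre direction $t^{j+1}$, and solve for the relevant components of $H_i$ along $v^{j+1}$ by inverting a matrix whose determinant is, up to a nonvanishing factor, a composition of the $D_{M'}^{}$-type determinants with $H_i$ restricted to the previous Segre set. Because $M$ is of finite type, Theorem~\ref{t:ber} gives an integer $m\le d+1$ with $\Rk v^k=N$ for $k\ge m$; after $m$ steps the map $\mathcal H_i$ is pinned down on a set of generic full rank, whence $H_1=H_2$ as formal maps. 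At each stage one must keep careful track of how many derivatives of the identities are consumed: each iteration raises the required jet order by a controlled amount depending on $\nu_{M'}$-type quantities evaluated along $H_i(v^j)$; a key point, used to make everything depend only on $M$ and not on $M'$, is that the relevant invariants of $M'$ that appear are bounded in terms of those of $M$ because $H_i$ is CR-transversal and finite-type is preserved, so the chain of determinants that must be nonzero can be traced back to the nonvanishing of a single $D_M^Z(\ntal,\nts)$ secured by $M\in{\mathcal C}$.

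For part~(ii), I would simply observe that the integer $K$ produced by the above argument is given by an explicit formula in $N$, $d$, $\kappa_M$, $\nu_M(\kappa_M)$ (and possibly $\nu_M(\infty)$ and the finite-type integer $m\le d+1$), all of which, by the remarks in \S\ref{ss:nondeg}, depend upper-semicontinuously on continuous deformations of $M$: for a continuous family $(M_t)_{t\in T}$ with $M_{t_0}=M\in{\mathcal C}$ of finite type, both conditions (class~${\mathcal C}$, finite type) are open in $t$, $\kappa_{M_t}$ and $\nu_{M_t}(k)$ are upper-semicontinuous, and $m$ is bounded by $d+1$; hence $K(M_t)$ is upper-semicontinuous near $t_0$. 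One must also check, as promised in the excerpt, that ``$M\in{\mathcal C}$'', $\kappa_M$, and $\nu_M(k)$ are independent of the choice of normal coordinates, so that the formula for $K$ is coordinate-free; this is the content of Corollary~\ref{c:blabla} and the invariance statement for $\nu_M$, which I would prove separately by comparing two normal coordinate systems via the induced change of coordinates and noting that the generic rank and the order of the determinant $D_M^Z$ transform by nonvanishing factors.

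The main obstacle I expect is the base step on the first Segre set: for a single fixed target map the needed nondegeneracy of $M'$ can be read off from that map, but to compare two arbitrary CR-transversal maps one must extract, purely from $M\in{\mathcal C}$ and CR-transversality, enough nondegeneracy to solve for the tangential components of $H_i$ along $v^1$ \emph{with an explicit, $M'$-independent bound} on the number of derivatives — this is exactly where the invariants $\kappa_M$ and $\nu_M(k)$ are designed to do the work, and getting the bookkeeping of derivative orders to close up through all $m$ Segre-set iterations (so that the final $K$ genuinely depends only on $M$) is the delicate part of the argument.
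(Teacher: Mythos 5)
Your proposal is correct and follows essentially the same route as the paper: a base step on the first Segre set in which the class-$\mathcal{C}$ invariants $\kappa_M$, $\nu_M(k)$ (and the determinants $D_M^Z$) control both the determination of the maps and the transfer of nondegeneracy to $M'$ via CR-transversality, followed by an iteration along higher Segre sets using reflection identities for not totally degenerate maps, the finite-type criterion of Theorem~\ref{t:ber} to conclude, and an explicit formula for $K$ in the upper-semicontinuous invariants to obtain part (ii). The only part you leave schematic is the one the paper treats in Proposition~\ref{p:soso} (determination of derivatives along the first Segre set, which needs the uniform bound $\ord\det\bar F_\chi(\chi,0)\leq\nu_M(\infty)$ together with an order-of-vanishing lemma), but you correctly flag this bookkeeping as the delicate point and include $\nu_M(\infty)$ among the ingredients of $K$.
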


The upper-semicontinuity of the jet order $K$ on continuous perturbations of $M$ in the above theorem is 
of fundamental importance in order to provide the upper-semicontinuity of the integer $\ell_p$ on $p$ in Theorem~\ref{t:main1} (see \S \ref{s:end} for details). We also mention here 
the following consequence of Theorem \ref{t:main} which, 
under additional assumptions on the manifolds, 
provides a finite jet determination result valid 
for pairs of {\em arbitrary} maps. In what follows, we say that a formal manifold 
$M$ of $\CN$ contains a formal curve if there exists a non-constant
formal map $\gamma \colon (\C_t,0)\to (\CN,0)$ such that 
for every 
$h\in {\mathcal I}(M)$, $h(\gamma (t),\overline{ \gamma (t)})\equiv 0$.

\begin{cor}\label{c:cor2}
Let $M, M'\subset \CN$ be a formal real hypersurfaces. 
Assume that $M\in \mathcal{C}$ and that $M'$ does
not contain any formal curve. 
Then there exists an integer $K$, depending only 
on $M$, such that for
any pair of formal holomorphic maps 
$H_1,H_2\colon (\CN,0)\to (\CN,0)$ sending $M$ into $M'$ 
it holds that if the  $K$-jets of $H_1$ and $H_2$ agree,
then necessarily $H_1=H_2$. Furthermore, the integer $K$ can 
be chosen to depend upper-semicontinuously on 
continuous deformations of $M$.
\end{cor}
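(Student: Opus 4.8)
The plan is to deduce this from Theorem~\ref{t:main} by showing that, in the hypersurface case ($d=1$) with $M'$ containing no formal curve, every formal holomorphic map $H\colon(\CN,0)\to(\CN,0)$ sending $M$ into $M'$ is automatically CR-transversal, so that the blanket CR-transversality hypothesis in Theorem~\ref{t:main}(i) can be dropped. Once this is established, the integer $K$ produced by Theorem~\ref{t:main} — which depends only on $M$ and depends upper-semicontinuously on continuous deformations of $M$ — does the job verbatim, since any pair $H_1,H_2$ as in the corollary is then a pair of CR-transversal maps sending $M$ into $M'$.

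The key step is therefore: \emph{if $M,M'\subset\CN$ are formal real hypersurfaces, $M'$ contains no formal curve, and $H\colon(\CN,0)\to(\CN,0)$ is a formal holomorphic map with $H(M)\subset M'$, then $H$ is CR-transversal}. I would argue by contraposition. Working in normal coordinates $Z=(z,w)$ for $M$ and $Z'=(z',w')$ for $M'$, write $H=(F,G)$ with $F$ the $z'$-components and $G$ the $w'$-component; since $d=1$ the map $H$ fails to be CR-transversal precisely when $dG$ vanishes on $T_0^{1,0}M \oplus T_0^{0,1}M$ appropriately — more concretely, when the "transversal derivative" $\partial G/\partial w(0)$ vanishes and $G$ carries no first-order transversal component. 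Then I would restrict the basic reflection identity $\bar\rho'(H(Z),\bar H(\zeta)) = 0$ for $(Z,\zeta)\in\M$ to the first Segre set, i.e. substitute $Z=v^1(t)=(t,0)$ and $\zeta=0$, to get $\bar G(\bar F(0),\cdot)$-type relations; pushing this through and using the normality condition \eqref{e:normality} for $M'$, non-CR-transversality forces $G\circ v^1\equiv 0$ and in fact forces the image of a whole Segre-set mapping of $H$ to land in the complex tangent hyperplane $\{w'=0\}\subset M'$. The image of $v^1$ under $H$ (or a suitable component of it restricted to a generic line), being a non-constant formal map into $M'$ whenever $H\not\equiv\text{const}$, then exhibits a formal curve in $M'$, contradicting the hypothesis. (If $H$ is constant it is trivially not among the maps we need to separate, or rather the statement is vacuous; and one checks $H\not\equiv 0$ is forced since $H(M)\subset M'$ with both generic.) This kind of argument — that non-transversal maps into a hypersurface degenerate their Segre images into the complex tangent space, which then must contain a formal curve — is exactly the type of "new property of mappings restricted to the first Segre set" the introduction advertises, and versions of it appear in \cite{ER1,ELZ1}; in the formal hypersurface setting it is self-contained.

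The main obstacle I anticipate is the bookkeeping in that contraposition step: making precise, purely formally, what "fails to be CR-transversal" means in terms of the components $(F,G)$ and the defining series $Q'$ of $M'$, and then squeezing out of the reflection identity on the first Segre set enough vanishing to produce an honest \emph{non-constant} formal curve $\gamma\colon(\C_t,0)\to(\CN,0)$ with $\rho'(\gamma,\bar\gamma)\equiv 0$, rather than merely a curve into the complexification. One has to rule out the degenerate possibility that the relevant component of $H\circ v^1$ is itself constant while $H$ is not — here finite type of $M$ (hence $\Rk v^k=N$ for $k$ large, by Theorem~\ref{t:ber}) together with $M\in\mathcal C$ should be invoked to propagate non-constancy down to the first Segre level, or alternatively one passes to a higher Segre set and runs the same degeneration argument there. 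Modulo that case analysis, the corollary is then immediate from Theorem~\ref{t:main}, including the upper-semicontinuity clause, since $\kappa_M$, the invariants $\nu_M(k)$, and hence $K$ depend upper-semicontinuously on continuous deformations of $M$ as noted in \S\ref{ss:nondeg}.
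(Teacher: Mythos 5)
Your overall reduction is the same as the paper's (deduce Corollary~\ref{c:cor2} from Theorem~\ref{t:main} via a transversality statement for maps into a hypersurface containing no formal curve), but as written there are two genuine gaps. First, Theorem~\ref{t:main} requires $M$ to be of \emph{finite type}, which is not among the hypotheses of Corollary~\ref{c:cor2}; you never verify it, and in fact you invoke ``finite type of $M$'' in your degenerate-case analysis as though it were assumed. The missing (easy but necessary) observation, which the paper makes explicitly, is that a formal real hypersurface belonging to the class ${\mathcal C}$ is automatically of finite type: if $M$ were not of finite type, in normal coordinates one would have $Q(z,\chi,0)\equiv 0$, so all $\Theta_\alpha$ vanish and $M\notin{\mathcal C}$. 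Without this step Theorem~\ref{t:main} cannot be applied at all.

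Second, your key claim --- that \emph{every} formal holomorphic map sending $M$ into $M'$ is CR-transversal --- is false, and your attempt to dispose of the exception is incorrect: the constant map $H\equiv 0$ always sends $M$ into $M'$ in the formal sense (every element of ${\mathcal I}(M')$ vanishes at the origin, so its pullback by the zero map is $0\in{\mathcal I}(M)$), and it is never CR-transversal; so ``$H\not\equiv 0$ is forced since $H(M)\subset M'$ with both generic'' is wrong. The correct statement, which the paper simply quotes from \cite[Corollary~2.4]{LM4}, is the dichotomy: any such map is either constant or CR-transversal. With that dichotomy the constant case is not vacuous but is easily handled and should be said: if $j_0^KH_1=j_0^KH_2$ with $K\geq 1$ and $H_1$ constant, then $H_2$ has vanishing $1$-jet, hence cannot be CR-transversal, hence is constant by the dichotomy, so $H_1=H_2=0$; if both are CR-transversal one applies Theorem~\ref{t:main}. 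Finally, your plan to reprove the dichotomy from scratch via the reflection identity on the first Segre set is only a sketch (you acknowledge the non-constancy propagation and the construction of an honest non-constant formal curve in $M'$ as open bookkeeping); this is precisely the nontrivial content of the cited result, so as it stands that portion is not a proof but an appeal to a statement you have not established --- and, as noted, the statement you set out to establish is not the right one without the constant-map exception.
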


\begin{proof}{Proof of Corollary~\ref{c:cor2}}
The corollary is an immediate consequence of Theorem~\ref{t:main}
by noticing  that  any formal real hypersurface that belongs to the class 
${\mathcal C}$ is necessarily of finite type and by 
using \cite[Corollary~2.4]{LM4}  
that in the setting of Corollary \ref{c:cor2}, 
 any formal holomorphic mapping $H\colon (\CN,0)\to
(\CN,0)$ sending $M$ into $M'$ is either constant or
CR-transversal.
\end{proof}

 The proof of Theorem~\ref{t:main} is given the next section.
In order to prove this theorem, we need to establish several new properties of CR-transversal maps
 {\em along the Segre variety} 
(which is done through \S\ref{ss:proptrans}--\S \ref{ss:jetdetderiv}). Since
 the maps we consider will turn out to be {\em not totally degenerate}, 
that is, their restriction to the Segre variety is of 
generic full rank, a careful analysis of the usual reflection
identities will suffice to iterate the determination property 
along higher order Segre sets (this is carried out in \S\ref{ss:iter}). The well-known finite
type criterion (given in Theorem~\ref{t:ber}) is finally used to conclude the proof of the theorem.

\section{Proof of Theorem \ref{t:main}}\label{s:first}
In this section, we use the notation and terminology introduced in \S
\ref{s:formal}. We let $M,M'$ be two formal generic submanifolds of $\CN$ with the
same codimension $d$ and {\em fix}  a choice of normal coordinates $Z=(z,w)$
(resp.\ $Z'=(z',w')$) so that $M$ (resp.\ $M'$) is defined through the power
series mapping $Q=Q(z,\chi,\tau)$ (resp.\ $Q'=Q'(z',\chi',\tau')$) given in
\eqref{e:coord}. Recall that we write
\begin{equation}\label{e:decompQ} 
  \Theta_\alpha \left( \chi \right)
  = Q_{z^\alpha} (0,\chi,0),\quad \alpha \in \N^n.
\end{equation} 
 In what follows, we use
  analogous notations for $M'$ by just adding a ``prime" to the corresponding
  objects.

For every formal map $H\colon (\CN,0)\to (\CN,0)$, we split the map
$$H=(F,G)=(F^1,\ldots,F^n,G^1,\ldots,G^d)\in \C^n\times \C^d$$ according to the
above choice of normal coordinates for $M'$. 
If $H$ sends $M$ into $M'$, we have the following
fundamental $\C^d$-valued identity 
\begin{equation}\label{e:fundamental}
  G(z,Q(z,\chi,\tau))=Q'(F(z,Q(z,\chi,\tau)),\bar{F}(\chi,\tau),\bar
  G(\chi,\tau)), 
\end{equation}
which  holds in the ring $\C \dbl z,\chi,\tau\dbr$. Note
that $H$ is CR-transversal if and only the $d\times d$ matrix
$G_w(0)$ is invertible (see e.g.\ \cite{ER1}). Recall also that
$H$ is {\em not totally degenerate} if $\Rk F_z(z,0)=n$.

 For every
positive integer $k$, we denote by 
$J^k_{0,0}(\C^N,\CN)$ the jet space
of order $k$ of formal holomorphic maps $(\CN,0)\to (\CN,0)$ and
by $j_0^k$ be the $k$-jet mapping. (After identifying 
the jet space with polynomials of degree $k$, this is 
just the map which truncates the Taylor series at degree $k$.) 
As done before, we equip the
source space $\CN$ with normal coordinates $Z$ for $M$
and the target space
$\CN$ with normal coordinates $Z'$ for $M'$. 
This choice being fixed, we
denote by $\Lambda^k$ the corresponding coordinates on $J^k_{0,0}(\C^N, \CN)$
and by ${\mathcal T}_0^k(\CN)$ the open subset of $J^k_{0,0}(\C^N,
\CN)$ consisting of $k$-jets of holomorphic maps $H=(F,G)$ for
which $G_w(0)$ is invertible. Hence, for every formal
CR-transversal mapping $H$ sending $M$ into $M'$, we have
$j_0^kH\in
{\mathcal T}_0^k(\CN)$.

\subsection{Properties of CR-transversal maps on the first Segre set}
\label{ss:proptrans}
We start
by establishing here a few facts concerning  CR-transversal formal holomorphic
mappings sending formal generic submanifolds into each other. We will in
particular derive the following list of important properties:

\begin{enumerate}
\item[(1)] we provide the invariance
of the condition to be in the class ${\mathcal C}$ for a formal submanifold $M$ as
well as the invariance of the associated numerical quantities
$\kappa_M$ and $\nu_M (k)$ for $k\in \N^*$ (Corollary \ref{c:blabla}).
\item[(2)] we obtain some rigidity properties of CR-transversal mappings
between submanifolds in the class ${\mathcal C}$, e.g.\ the fact that they are necessarily not totally degenerate
with a certain uniform bound on the degeneracy 
considered (see Corollary~\ref{c:rigid} and Equation~\eqref{e:unifbd}) 
as well as their determination on the first Segre
set by a finite jet (Corollary \ref{c:finitejetfirst}).
\item[(3)] as a byproduct of the proofs, we obtain a new sufficient condition on $M$ that force any CR transversal formal map sending $M$ into another formal submanifold $M'$ of the same dimension to be a formal biholomorphism (Corollary~\ref{c:byproduct}). 
\end{enumerate}

All the above mentioned properties will be obtained as consequences of the following result, which can be seen as a generalization in higher codimension of an analogous version obtained for the case of hypersurfaces in \cite{DF1}.

\begin{prop}\label{p:invariance} Let $M,M'$ be formal generic submanifolds of
  $\CN$ of the same dimension. Then for
  every $\alpha\in \N^n$, there exists a universal $\C^d$-valued holomorphic
  map $\Phi_\alpha$ defined in a neighbourhood of $\{0\}\times {\mathcal
  T}_0^{|\alpha|}(\CN)\subset \C^{dr_{|\alpha|}}\times {\mathcal
  T}_0^{|\alpha|}(\CN)$, where $r_{|\alpha|}:={\rm card}\, \{\beta \in \N^n:
  1\leq |\beta|\leq |\alpha|\}$,  such that for every CR-transversal formal map
  $H\colon (\CN,0)\to (\CN,0)$ sending $M$ into $M'$, we have
  \begin{equation}\label{e:start1} \Theta_{\alpha} (\chi)=\Phi_\alpha
    \left(\left(\Theta_{\beta}'(\bar F (\chi,0))\right)_{|\beta|\leq
    |\alpha|},j_0^{|\alpha|}H\right).  \end{equation} \end{prop}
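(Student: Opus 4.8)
The plan is to extract identity \eqref{e:start1} directly from the fundamental identity \eqref{e:fundamental} by differentiating in $z$ and then restricting carefully. Write $H=(F,G)$ in normal coordinates and recall that $\Theta_\alpha(\chi)=Q_{z^\alpha}(0,\chi,0)$ and $\Theta'_\beta(\chi')=Q'_{z'^\beta}(0,\chi',0)$. The strategy is: first set $\tau=0$ in \eqref{e:fundamental}, using the normalization \eqref{e:normality} (namely $Q(z,0,0)=0$ and $Q(0,\chi,0)=0$, $Q'(z',0,0)=0$, $Q'(0,\chi',0)=0$) to control the low-order terms; then apply $\partial_z^\alpha$ and evaluate at $z=0$. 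The left-hand side of \eqref{e:fundamental} becomes, after $\partial_z^\alpha|_{z=0}$, an expression built from $G_{z^\beta w^\gamma}(0)$ (jet data of $H$ of order $\le|\alpha|$) together with the derivatives $Q_{z^\beta}(0,\chi,0)=\Theta_\beta(\chi)$ for $|\beta|\le|\alpha|$ via the Faà di Bruno formula applied to $z\mapsto(z,Q(z,\chi,0))$. The right-hand side, by the chain rule applied to the composition $z\mapsto\bigl(F(z,Q(z,\chi,0)),\bar F(\chi,0),\bar G(\chi,0)\bigr)$ fed into $Q'$, becomes an expression in the $(|\alpha|)$-jet of $H$ and in the derivatives of $Q'$ at the point $\bigl(0,\bar F(\chi,0),\bar G(\chi,0)\bigr)$; here one must also use \eqref{e:fundamental} with $z=0$ to identify $\bar G(\chi,0)$ in terms of jets of $H$ — in fact $G(0,\tau)=\tau$ forces, after conjugation and the appropriate substitution, a relation expressing the relevant $\bar G$-data, and the $\chi'$-derivatives $Q'_{z'^\beta}(0,\chi',0)$ only enter as $\Theta'_\beta(\bar F(\chi,0))$ once one checks the $w'$-slot gets set to the value dictated by normality.

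Concretely, the key step is an induction on $|\alpha|$. For $|\alpha|=1$, differentiating \eqref{e:fundamental} once in $z_j$ at $z=0$, $\tau=0$, and using $F(0,0)=0$, $Q_z(0,\chi,0)$-normality, yields $\Theta_{e_j}(\chi)= (\text{linear expression in }Q'_{z'}(0,\bar F(\chi,0),0)\text{ with coefficients }F_z(0),G_w(0)^{-1},\dots)$, and the invertibility of $G_w(0)$ — guaranteed precisely on ${\mathcal T}_0^{|\alpha|}(\CN)$ — is what lets us solve for $\Theta_{e_j}$; this is where the domain $\{0\}\times{\mathcal T}_0^{|\alpha|}(\CN)$ and the holomorphic dependence of $\Phi_\alpha$ on $j_0^{|\alpha|}H$ (through $G_w(0)^{-1}$, which is holomorphic on that open set) come from. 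For the inductive step, applying $\partial_z^\alpha|_{z=0,\tau=0}$ produces on the left a term $G_w(0)\,\Theta_\alpha(\chi)$ plus lower-order junk expressible via the $\Theta_\beta$ with $|\beta|<|\alpha|$ and $j_0^{|\alpha|}H$, and on the right a term that is again $G_w(0)$ times something plus an expression in $\Theta'_\beta(\bar F(\chi,0))$, $|\beta|\le|\alpha|$, and $j_0^{|\alpha|}H$; the inductive hypothesis eliminates the $\Theta_\beta$, $|\beta|<|\alpha|$, in favor of $\Theta'_\beta$ and jets, and inverting $G_w(0)$ again gives the desired closed form, defining $\Phi_\alpha$. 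The $\C^{dr_{|\alpha|}}$ factor in the domain is exactly the slot where the tuple $\bigl(\Theta'_\beta(\bar F(\chi,0))\bigr)_{1\le|\beta|\le|\alpha|}$ lives (each $\Theta'_\beta$ being $\C^d$-valued, and $r_{|\alpha|}$ counting the relevant multiindices), and the neighbourhood of $\{0\}$ there is harmless since $\Phi_\alpha$ is polynomial in those variables.

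I expect the main obstacle to be bookkeeping: organizing the Faà di Bruno / multivariate chain rule expansions on both sides so that the highest-order term is cleanly isolated as $G_w(0)$ times $\Theta_\alpha$ (resp. a linear combination of the $\Theta'_\beta$), and verifying that every remaining term is genuinely a function only of $j_0^{|\alpha|}H$ and the lower-order $\Theta_\beta$ (for the induction) — in particular that no derivative of $Q$ or $Q'$ of order $>|\alpha|$ in the $z$ (resp. $z'$) variables, and no derivative in the $\tau$ (resp. $w'$) variables beyond what is killed by normality, sneaks in. A secondary point requiring care is the substitution in the $w'$-argument of $Q'$: one needs $\bar G(\chi,0)=0$ (which follows from $\bar Q'(\ldots)$-normality combined with \eqref{e:fundamental} at $z=0,\tau=0$, giving $\bar G(\chi,0)=\bar Q(0,\ldots)$-type terms that vanish), so that indeed $Q'_{z'^\beta}$ is evaluated at $(0,\bar F(\chi,0),0)$ and equals $\Theta'_\beta(\bar F(\chi,0))$. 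Once these normalization reductions are in place, the argument is a routine, if lengthy, differentiation; the existence and universality (independence of $M,M'$ beyond their normal-coordinate data, which is itself encoded in the $\Theta$'s) of $\Phi_\alpha$ then follow by construction, and holomorphy near $\{0\}\times{\mathcal T}_0^{|\alpha|}(\CN)$ is inherited from the rational dependence on the entries of $G_w(0)$.
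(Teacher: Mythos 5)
Your overall strategy---induction on $|\alpha|$, differentiating the fundamental identity \eqref{e:fundamental} in $z$ and evaluating at $z=\tau=0$, with normality giving $G(z,0)\equiv 0$, hence $\bar G(\chi,0)\equiv 0$, so that the derivatives of $Q'$ are evaluated at $(0,\bar F(\chi,0),0)$ and produce $\Theta'_\beta(\bar F(\chi,0))$---is exactly the paper's. But there is a genuine gap at the solving step. When you differentiate the right-hand side of \eqref{e:fundamental}, the first argument of $Q'$ is $F(z,Q(z,\chi,0))$, and $\partial_z^\alpha$ of this at $z=0$ contains the term $F_w(0)\cdot\Theta_\alpha(\chi)$ (already visible for $|\alpha|=1$, cf.\ \eqref{e:dday}). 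So the unknown $\Theta_\alpha(\chi)$ appears on \emph{both} sides: on the right it comes multiplied by $\sum_{k}\Theta'_{e_k}(\bar F(\chi,0))\,F^k_w(0)$. Inverting $G_w(0)$, as you propose, therefore does not isolate $\Theta_\alpha$; after that inversion the unknown is still present on the right. The matrix one must actually invert is
$A=G_w(0)-\sum_{k=1}^n\Theta'_{e_k}(\bar F(\chi,0))\cdot F^k_w(0)$,
which depends on the $\Theta'$-data as well as on $j_0^1H$ (this is \eqref{e:addnew} in the paper), and the proposition's map is $\Phi_\alpha:=A^{-1}\cdot B_\alpha$ for suitable universal polynomial maps $B_\alpha$.

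This is not mere bookkeeping: it is what forces the shape of the statement. Since $\det A$ can vanish away from the origin of the first factor, $\Phi_\alpha$ is rational, not polynomial, in the $\C^{dr_{|\alpha|}}$-variables; its holomorphy is guaranteed only because $A(0,\Lambda)=G_w(0)$ is invertible for $\Lambda\in {\mathcal T}_0^{|\alpha|}(\CN)$, hence $A$ is invertible on a neighbourhood of $\{0\}\times{\mathcal T}_0^{|\alpha|}(\CN)$. Your closing claims that $\Phi_\alpha$ ``is polynomial in those variables'' and that the restriction to a neighbourhood of $\{0\}$ is ``harmless'' are therefore incorrect, and they mark precisely the point your argument skips: the neighbourhood condition is essential, not cosmetic. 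The gap is repairable along your own lines: after applying $\partial_z^\alpha|_{z=\tau=0}$, collect all occurrences of $\Theta_\alpha$ on one side (using the inductive hypothesis only for the genuinely lower-order terms $\Theta_\beta$, $|\beta|<|\alpha|$), observe that the coefficient is the same matrix $A$ for every $\alpha$, and invert $A$ near the zero section of the first factor; with that correction your induction closes and reproduces the paper's proof.
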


\begin{proof} We proceed by induction on the length of $\alpha$. For every
  $j=1,\ldots,n$, we denote by $e_j$ the multiindex of $\N^n$ having $1$ at the
  $j$-th digit and zero elsewhere.  Let $H$ be as in the statement of the
  proposition. Differentiating \eqref{e:fundamental} with respect to $z_j$,
  evaluating at $z=\tau=0$ and using the fact that $G(z,0)\equiv 0$ (which
  follows directly from \eqref{e:fundamental}) yields
  \begin{equation}\label{e:dday} G_w(0)\cdot \Theta_{e_j}(\chi)=\sum_{k=1}^n
    \Theta_{e_k}'(\bar F(\chi,0))\, (F^k_{z_j}(0)+F^k_w(0)\cdot \Theta_{e_j}(\chi)),
  \end{equation} where $\Theta_{e_j}$ is considered as a column vector  and
  $F^k_w(0)$ as a row vector. We thus define polynomial maps 
  \[A\colon
  \C^{dn}\times J^1_{0,0}(\CN,\CN)\to \mathbb M_d (\C),
  \quad B_j \colon
  \C^{dn}\times J^1_{0,0}(\CN,\CN)\to  \C^d, \quad j = 1,\dots,n,\]
where
  $\mathbb M_d$ denotes the space of $d\times d$ complex-valued matrices, so 
  that for each
  $j=1,\ldots,n$, 
  so that for every map $H$ as above
  \begin{equation}\label{e:addnew} A((\Theta_{\beta}'(\bar F
    (\chi,0)))_{|\beta|=1},j_0^1H)=\frac{\partial G}{\partial
    w}(0)-\sum_{k=1}^n\Theta_{e_k}'(\bar F (\chi,0))\cdot F_w^k(0),
  \end{equation}
    $$B_j ((\Theta_{\beta}'(\bar F
    (\chi,0)))_{|\beta|=1},j_0^1H)=\sum_{k=1}^n\Theta_{e_k}'(\bar F(\chi,0))\,
    F^k_{z_j}(0).$$
    Note also that for all 
    $\Lambda^1 \in {\mathcal
    T}^1_0(\CN),\, {\rm det}\, A(0,\Lambda^1)\not =0$. Therefore,
    $\Phi_{e_j}:=A^{-1}\cdot {B_j}$ is holomorphic in a neighbhourhood of
    $\{0\}\times {\mathcal T}_0^1(\CN)\subset \C^{dn} \times {\mathcal
    T}_0^1(\CN)$ and satisfies the desired property in view of \eqref{e:dday}.

To prove \eqref{e:start1} for $|\alpha|>1$, one differentiates
\eqref{e:fundamental} with respect to $z^\alpha$ and evaluates at $z=\tau=0$.
Using the induction to express every term $\Theta_{\beta}$ with $|\beta|<|\alpha|$ by
$\Phi_\beta$, we obtain for every formal map $H\colon (\CN,0)\to (\CN,0)$
sending $M$ into $M'$ an expresssion of the form 
$$A((\Theta_{\beta}'(\bar F
(\chi,0)))_{|\beta|=1},j_0^1H)\cdot \Theta_{\alpha} (\chi)=B_\alpha
\left((\Theta_{\beta}'(\bar F (\chi,0)))_{|\beta|\leq
|\alpha|},j_0^{|\alpha|}H\right),$$ 
where $B_\alpha \colon
\C^{dr_{|\alpha|}}\times J^1_{0,0}(\CN,\CN)\to  \C^d$ 
is a universal polynomial
map and $A$ is given by \eqref{e:addnew}. 
As in the case of multiindices of
length one, we conclude by setting $\Phi_\alpha:=A^{-1}\cdot B_\alpha$. The
proof of Proposition \ref{p:invariance} is complete.  \end{proof}

A number of interesting consequences may be derived from Proposition
\ref{p:invariance}. For instance, it immediately yields the following
corollary;
we note that we have not yet proved the independence of the quantities $\kappa_M$ and $\nu_M(k)$ for $k\in \N^*\cup \{\infty\}$ on
the choice of coordinates; however, this invariance, stated
in Corollary~\ref{c:blabla} below, is an immediate
consequence of Corollary \ref{c:rigid}, 
so we already state this latter in the invariant way.

\begin{cor}\label{c:rigid} Let $M,M'\subset \CN$ be a formal generic submanifolds of
  the same dimension. Suppose that $M$
  belongs to the class ${\mathcal C}$ $($as defined in \S {\rm
  \ref{ss:nondeg}}$)$ and that there exists a formal CR-transversal map
  $H\colon (\CN,0)\to (\CN,0)$ sending $M$ into $M'$. Then necessarily $H$ is
  not totally degenerate, $M'\in {\mathcal C}$, $\kappa_{M'}\leq \kappa_M$, 
  and for 
  every integer $k\geq 1$, 
  $\nu_{M} (k) \geq \nu_{M'} (k) + \ord \det \bar F_\chi (\chi,0)$.
\end{cor}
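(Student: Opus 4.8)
The plan is to extract all four conclusions directly from the key identity~\eqref{e:start1} of Proposition~\ref{p:invariance}, namely
\[
\Theta_\alpha(\chi) = \Phi_\alpha\!\left(\bigl(\Theta'_\beta(\bar F(\chi,0))\bigr)_{|\beta|\le|\alpha|}, j_0^{|\alpha|}H\right),
\]
combined with differentiation in $\chi$ and an analysis of ranks. First I would differentiate this identity with respect to $\chi$, using the chain rule: for a fixed $k$, the Jacobian in $\chi$ of the map $\chi\mapsto\bigl(\Theta_\alpha(\chi)\bigr)_{|\alpha|\le k}$ factors through the Jacobian $\bar F_\chi(\chi,0)$ of the inner map $\chi\mapsto\bar F(\chi,0)$ composed with the (holomorphic) Jacobian of the universal maps $\Phi_\alpha$ evaluated at the relevant jet. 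Since $M\in\mathcal C$, the map $\chi\mapsto\bigl(\Theta_\alpha(\chi)\bigr)_{|\alpha|\le\kappa_M}$ has generic rank $n$, so the composition on the right must also have generic rank $n$; this forces the generic rank of $\bar F_\chi(\chi,0)$ to be $n$, i.e.\ $\Rk \bar F_\chi(\chi,0)=n$, which (taking conjugates) is exactly the statement that $H$ is not totally degenerate, $\Rk F_z(z,0)=n$.

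Next, for the statement $M'\in\mathcal C$ with $\kappa_{M'}\le\kappa_M$: the factorization shows that the rank of $\chi\mapsto\bigl(\Theta_\alpha(\chi)\bigr)_{|\alpha|\le k}$ is bounded above by the generic rank of $\chi\mapsto\bigl(\Theta'_\beta(\bar F(\chi,0))\bigr)_{|\beta|\le k}$, and since $\bar F_\chi(\chi,0)$ has generic rank $n$, the latter equals the generic rank of $\chi'\mapsto\bigl(\Theta'_\beta(\chi')\bigr)_{|\beta|\le k}$ (composition with a generically-full-rank map between equidimensional spaces does not drop generic rank). Applying this with $k=\kappa_M$ shows the primed map has generic rank $n$ for some $k\le\kappa_M$, hence $M'\in\mathcal C$ and $\kappa_{M'}\le\kappa_M$. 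The invariance statement of Corollary~\ref{c:blabla} then follows by applying this to the pair $(M,M)$ with $H$ a formal change of coordinates, in both directions.

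Finally, for the order inequality $\nu_M(k)\ge\nu_{M'}(k)+\ord\det\bar F_\chi(\chi,0)$: here I would fix an $n$-tuple of multiindices $\ntal$ with $\maxlen{\ntal}\le k$ and the corresponding $\nts$, and compute $D_M^Z(\ntal,\nts)$ by differentiating~\eqref{e:start1} once more in $\chi$ and taking the relevant $n\times n$ minor. By the chain rule, the matrix $\bigl(\partial\Theta^{s_i}_{\alpha^{(i)}}/\partial\chi_j\bigr)_{i,j}$ equals a product whose right factor is the $n\times n$ Jacobian $\bar F_\chi(\chi,0)$ and whose left factor is an $n\times n$ matrix built from the primed $\Theta'$-derivatives and the jet of $H$ — specifically the rows come from $\sum_\gamma (\partial\Phi_{\alpha^{(i)}}^{s_i}/\partial\Theta'_\gamma)\cdot(\partial\Theta'_\gamma/\partial\chi')(\bar F(\chi,0))$. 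Taking determinants, $D_M^Z(\ntal,\nts)$ is a sum of products, each containing the factor $\det\bar F_\chi(\chi,0)$, of terms of the form $D_{M'}^{Z'}(\underline{\beta},\underline{t})\big|_{\chi'=\bar F(\chi,0)}$ times jet factors; hence $\ord D_M^Z(\ntal,\nts)\ge \ord\det\bar F_\chi(\chi,0) + \min_{\underline\beta,\underline t}\ord\bigl(D_{M'}^{Z'}(\underline\beta,\underline t)\circ\bar F(\cdot,0)\bigr)$, and since $\bar F_\chi(\chi,0)$ has generic rank $n$ the substitution $\chi'=\bar F(\chi,0)$ does not increase order beyond what is recorded, giving the lower bound $\ge\ord\det\bar F_\chi(\chi,0)+\nu_{M'}(k)$. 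Taking the infimum over $\ntal$ with $\maxlen{\ntal}\le k$ yields the claim.

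The main obstacle I expect is the bookkeeping in this last step: one must check that every multiindex tuple $\underline\beta$ appearing on the right still satisfies $\maxlen{\underline\beta}\le k$ (this is clear since $\Phi_\alpha$ only involves $\Theta'_\gamma$ with $|\gamma|\le|\alpha|$), and, more delicately, that composing $D_{M'}^{Z'}(\underline\beta,\underline t)$ with $\chi'\mapsto\bar F(\chi,0)$ does not artificially lower the order below $\nu_{M'}(k)$ — i.e.\ one needs a small lemma stating $\ord(g\circ\psi)\ge\ord g$ whenever $\psi$ is a formal map with $\psi(0)=0$. Care is also needed because $\ord\det\bar F_\chi(\chi,0)$ could be $0$, in which case the inequality is simply the composition-does-not-lower-order statement; the genuinely new content is when this order is positive, i.e.\ when $H$ is not a submersion along the Segre variety.
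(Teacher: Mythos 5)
Your proposal is correct and follows essentially the same route as the paper: differentiate the identity of Proposition~\ref{p:invariance} in $\chi$, factor the Jacobian through $\bar F_\chi(\chi,0)$ to get non-total-degeneracy and $M'\in\mathcal{C}$ with $\kappa_{M'}\le\kappa_M$, and then obtain the $\nu$-inequality by expanding the $n\times n$ determinant of the chain-rule product (the paper invokes Cauchy--Binet for exactly the sum-of-minors expansion you describe) together with the elementary fact that substituting $\chi'=\bar F(\chi,0)$, which vanishes at $0$, does not lower order below $\nu_{M'}(k)$.
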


\begin{proof} We start the proof
  by introducing some notation which will be used consistently from 
  now on. For any $n$-tuple of multiindeces of $\N^n$ 
  $\ntal=\left( \alpha^{(1)},\dots,\alpha^{(n)} \right)$ and integers
  $\nts = (s_1,\dots,s_n)\in\left\{ 1,\dots,d \right\}^{n}$, we write
  \begin{equation}
    \Theta_{{\ntal,\nts}}  = \left( 
    \Theta_{\alpha^{(1)}}^{s_1}, \dots ,
    \Theta_{\alpha^{(n)}}^{s_n}\right), 
    \label{e:defthals}
  \end{equation}
  and 
  \begin{equation}\label{e:defphials}
    \Phi_{{\ntal,\nts}}
    :=\left(\Phi^{s_1}_{\alpha^{(1)}},\ldots,\Phi^{s_n}_{\alpha^{(n)}}
    \right)
  \end{equation} 
  for the corresponding map 
  given by Proposition \ref{p:invariance}. We thus have 
  from the same Proposition that
  \begin{equation}\label{e:tired} \Theta_{{\ntal,\nts}}(\chi)
    =\Phi_{{\ntal,\nts}}
    \left(\left(\Theta_{\beta}'(\bar F (\chi,0))\right)_{|\beta|\leq
    \maxlen{\ntal}},j_0^{\maxlen{\ntal}}H\right)=
    \Phi_{{\ntal,\nts}}
    \left(\Theta_{\maxlen{\ntal}}'
    (\bar F (\chi,0)),j_0^{\maxlen{\ntal}}H\right),  
  \end{equation} 
  where we use the notation
  $\Theta'_k = (\Theta'_\beta)_{|\beta|\leq k}$ for every integer $k$. 
  We also write for any $\ntal$, $\nts$
  \begin{equation}\label{e:upsilon} 
    \Upsilon^H_{\ntal,\nts} (\chi'):=\Phi_{{\ntal,\nts}}
    \left(\Theta_{\maxlen{\ntal}}'(\chi'),
    j_0^{\maxlen{\ntal}}H\right),  
  \end{equation} 
  where we recall that $\Phi_{{\ntal,\nts}}=\Phi_{{\ntal,\nts}} \left(X, \Lambda^{\maxlen{\ntal}}\right)$ is holomorphic in a neighbourhood of $\{0\} \times {\mathcal T}_0^{\maxlen{\ntal}}(\CN)\subset \C^{dr_{\maxlen{\ntal}}}\times {\mathcal T}_0^{\maxlen{\ntal}}(\CN)$.  Since $M\in \mathcal{C}$, we can choose   $n$-tuples
  of multiindeces
  $\ntal$ 
  and integers $\nts$ with $\maxlen{\ntal}= \kappa_M$ 
  such that the formal map
  $\chi \mapsto \Theta_{{\ntal,\nts}}(\chi)$ is of
  generic rank $n$. 
  Differentiating
  \eqref{e:tired} with respect to $\chi$ yields
  \begin{equation}\label{e:key} 
    \frac{ \partial\Theta_{{\ntal,\nts}}}{\partial \chi}(\chi)
    =\frac{\partial \Upsilon^H_{{\ntal,\nts}}}{\partial \chi'}
    (\bar  F(\chi,0))\cdot  \bar F_\chi(\chi,0).  
  \end{equation} 
  From
  \eqref{e:key}, we immediately get that $\Rk \displaystyle \bar
  F_\chi(\chi,0)=n$ i.e.\ that $H$ is not totally degenerate. We also
  immediately get that $$\Rk \frac{\partial \Upsilon^H_{
  {\ntal,\nts}}}{\partial \chi'} (\chi')=n,$$ which implies
  in view of \eqref{e:upsilon} that the generic rank of the map 
  $\chi'\mapsto \Theta_{\kappa_M}'(\chi')$ is also
  $n$, which shows that $M'\in {\mathcal C}$ and that $\kappa_{M'}\leq
  \kappa_{M}$.  

  Let us now prove the inequality for $\nu_M$. To this end, for every integer $k\geq 1$ 
  and for every choice of $\ntal= (\alpha^{(1)}, \dots, \alpha^{(n)})\in \Nn
  \times \dots \times \Nn$ with $\maxlen{\ntal}\leq k$ 
  and $\nts=(s_1,\dots,s_n) \in 
  \left\{ 1,\dots,d \right\}^n$, we consider the resulting equation
  \eqref{e:tired}. Differentiating \eqref{e:tired}
  with respect to $\chi$ yields 
  that the determinant considered in \eqref{e:Ddef} is expressed 
  as the product of $\det \bar F_\chi (\chi,0)$ with the determinant of 
  \[\frac{\partial \Phi_{{\ntal,\nts}}}{\partial X} 
  \left(\Theta^\prime_{\maxlen{\ntal}} (\chi'),j_0^{\maxlen{\ntal}}H\right)\Big|_{
  \chi' = \bar F(\chi,0)} 
  \cdot \frac{\partial \Theta^\prime_{{\maxlen{\ntal}}}}{\partial \chi'} 
  (\bar F(\chi,0)).\]
  Applying the Cauchy-Binet formula (allowing to express the determinant
  of this matrix product as the sum of the product of corresponding minors
  of the factors), we get the equation
  \[
  D_M^Z (\ntal,\nts) = 
  \left( \sum_{
  \substack{\maxlen{\underline{\beta}}\leq k \\ 
  \underline{t} \in \left\{ 1,\dots,d \right\}^n}} 
  a_{\underline{\beta},\underline{t}} (\chi) D_{M'}^{Z'} (\underline{\beta} ,\underline{t}) 
  (\chi')\Big|_{\chi'=\bar F( \chi,0)}\right) \det \bar F_\chi (\chi,0).
  \]
  From this we 
  see that the order of the of the right hand side
  is at least $\nu_{M'} (k)  + \ord \det \bar F_\chi (\chi,0)$, and 
  since this holds for any choice of $\ntal$ and $\nts$ as above, we obtain the inequality
  $\nu_M (k) \geq \nu_{M'} (k) + \ord \det \bar F_{\chi} (\chi,0)$.
\end{proof}

\begin{rem}\label{r:order} Under the assumptions and notation of the proof of
  Corollary \ref{c:rigid}, it also follows from \eqref{e:key} 
  that the order of the
  power series
  \[\chi \mapsto {\rm det}\, \left( \displaystyle \frac{\partial \Upsilon^H_{
  {\ntal,\nts}}}{\partial \chi'}
  (\bar F(\chi,0))\right)\]
  is {\em uniformly bounded} by $\nu_M (k)$ for any choice 
  of $n$-tuple of multiindeces $\ntal$ with
  $\maxlen{\ntal} \leq k$ and of integers $\nts = (s_1,\dots,s_n)$ for
  which $\ord D_M^Z (\ntal,\nts) = \nu_M(k)$. This fact will be
  useful in the proof of Corollary \ref{c:finitejetfirst} and Proposition
  \ref{p:soso} below.  
\end{rem}
\begin{rem}
  It is easy to see that the inequality $\nu_{M'}(k) + \ord \det \bar F_\chi
  (\chi,0)\leq \nu_M(k)$ may be strict; consider for 
  example $M = \left\{(z,w)\in \C^2: \imag w = |z|^8 \right\}$, $M' = \left\{ (z,w)\in \C^2: \imag w = |z|^{4} \right\}$, 
  and $H(z,w)= (z^2,w)$. Our proof also gives the 
  somewhat better inequality 
  $$\nu_{M'}(k)\cdot \,\ord \bar F(\chi,0) + \ord \det \bar F_\chi (\chi,0)
  \leq \nu_M(k)$$
  (in which equality holds in the above example in $\C^2$, but 
  not in general). The inequality given in Corollary~\ref{c:rigid} 
  is strong enough 
  in order to derive the invariance in Corollary~\ref{c:blabla}
  below, so we will not dwell on this matter any longer.
\end{rem}

From  Corollary \ref{c:rigid}, the invariance of $\kappa_M$ and $\nu_M (k)$
immediately follows.

\begin{cor}\label{c:blabla} Let $M$ be a formal generic submanifold of $\CN$. Then
  the condition for $M$ to be in the class ${\mathcal C}$ is independent of the
  choice of $($formal$)$ normal coordinates. Moreover, for $M$ arbitrary, the integers
  $\kappa_M$ and $\nu_M (k)$ for $k\in \N^*\cup \{\infty\}$, 
  defined in \S {\rm \ref{ss:nondeg}}, are
  also independent of a choice of such
  coordinates and hence invariantly attached to the formal submanifold $M$.
\end{cor}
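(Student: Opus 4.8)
The plan is to obtain Corollary~\ref{c:blabla} as a direct consequence of Corollary~\ref{c:rigid}, applied to the biholomorphism relating two choices of normal coordinates. Fix a formal generic submanifold $M\subset\CN$ and two choices of normal coordinates $Z=(z,w)$ and $\tilde Z=(\tilde z,\tilde w)$ as in \S\ref{ss:second}, with associated defining series $Q$ and $\tilde Q$; write $M_Z$ (resp.\ $M_{\tilde Z}$) for $M$ regarded as a formal submanifold in the coordinates $Z$ (resp.\ $\tilde Z$). First I would record that the change of coordinates $\Psi$ defined by $\tilde Z=\Psi(Z)$ is a formal biholomorphism of $(\CN,0)$ which, by construction, sends $M_Z$ into $M_{\tilde Z}$ in the sense of \S\ref{ss:third} (and whose inverse sends $M_{\tilde Z}$ into $M_Z$). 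Being invertible, $\Psi$ is in particular CR-transversal. Splitting $\Psi=(F,G)$ according to the target normal coordinates $\tilde Z$, the standard fact that a change of normal coordinates restricts at the origin to a linear isomorphism of $T^{1,0}_0M$ forces the $n\times n$ matrix $F_z(0)$ to be invertible; hence $\det\bar F_\chi(\chi,0)$ has nonzero constant term, i.e.\ $\ord\det\bar F_\chi(\chi,0)=0$. The same holds for $\Psi^{-1}$.

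With this in hand, the argument is essentially immediate. Assume first that $M_Z\in{\mathcal C}$. Applying Corollary~\ref{c:rigid} to $\Psi\colon M_Z\to M_{\tilde Z}$ yields $M_{\tilde Z}\in{\mathcal C}$, $\kappa_{M_{\tilde Z}}\le\kappa_{M_Z}$, and, for every integer $k\ge 1$,
\[
\nu_{M_Z}(k)\ \ge\ \nu_{M_{\tilde Z}}(k)+\ord\det\bar F_\chi(\chi,0)\ =\ \nu_{M_{\tilde Z}}(k).
\]
Since we now know $M_{\tilde Z}\in{\mathcal C}$, we may in turn apply Corollary~\ref{c:rigid} to $\Psi^{-1}\colon M_{\tilde Z}\to M_Z$, which reverses all these inequalities; hence $\kappa_{M_Z}=\kappa_{M_{\tilde Z}}$ and $\nu_{M_Z}(k)=\nu_{M_{\tilde Z}}(k)$ for all $k\ge 1$. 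Because the two coordinate systems play symmetric roles, this also shows $M_Z\in{\mathcal C}$ if and only if $M_{\tilde Z}\in{\mathcal C}$, so membership in ${\mathcal C}$ is coordinate independent, and $\kappa_M$ and $\nu_M(k)$ for $k\in\N^*$ are invariantly attached to $M$.

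It then remains to dispose of two trivial cases. If $M\notin{\mathcal C}$, then in any normal coordinates the generic rank of $\chi\mapsto(\Theta_\alpha(\chi))_{|\alpha|\le k}$ is $<n$ for every $k$, so every determinant $D_M^Z(\ntal,\nts)$ in \eqref{e:Ddef} vanishes identically; thus $\kappa_M=+\infty$ and $\nu_M^Z(k)=+\infty$ for all $k$, trivially independently of the coordinates. Finally, by \eqref{e:nuinftdef} one has $\nu_M(\infty)=\inf_{k\in\N}\nu_M(k)$, so the invariance of $\nu_M(\infty)$ follows from that of the $\nu_M(k)$.

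The only point requiring a word of care is the claim that $F_z(0)$ is invertible for a change of normal coordinates — which is what makes $\ord\det\bar F_\chi(\chi,0)=0$ and thereby lets the inequality of Corollary~\ref{c:rigid} collapse to an equality in both directions. This is the standard observation that any biholomorphism carrying one system of normal coordinates for $M$ to another must preserve the complex tangent space $T^{1,0}_0M$, spanned by $\partial/\partial z_1,\dots,\partial/\partial z_n$ in normal coordinates; everything else in the proof is a formal bookkeeping already carried out in Corollary~\ref{c:rigid}.
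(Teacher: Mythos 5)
Your proof is correct and follows exactly the route the paper intends: Corollary~\ref{c:blabla} is obtained by applying Corollary~\ref{c:rigid} to the formal biholomorphism relating two systems of normal coordinates (which is CR-transversal) in both directions, plus the trivial case $M\notin\mathcal{C}$. The only remark worth making is that your care about $F_z(0)$ being invertible is not even needed: since $\ord\det\bar F_\chi(\chi,0)\geq 0$ for both $\Psi$ and $\Psi^{-1}$, the two opposite inequalities from Corollary~\ref{c:rigid} already force equality of the $\nu$'s (and, as a byproduct, the vanishing of both order terms).
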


Another consequence that is noteworthy to point out 
is given by the following criterion for a CR-transversal map 
to be an automorphism. Note that the inequality for the numerical invariant
$\nu_M$ given in Corollary~\ref{c:rigid} implies 
that for any CR-transversal map $H$ sending the 
formal generic submanifold $M$ 
of $\CN$, where $M\in \mathcal{C}$, into another
formal generic submanifold
$M'$ of $\CN$ with the same dimension, it follows that
\begin{equation}
  \ord \det \bar F_\chi (\chi,0) \leq \nu_M (\infty).
  \label{e:unifbd}
\end{equation}
Recalling that $\nu_M(\infty)=0$ if and only if $M$ is finitely nondegenerate, we therefore get:
  \begin{cor}\label{c:byproduct}
  Let $M,M'\subset \CN$ be  formal generic submanifolds of the same dimension, and
  assume that $M\in\mathcal{C}$. Then a formal
  CR-transversal holomorphic map sending $M$ into $M'$ is an automorphism
  if and only if for some $k\geq\kappa_M$, 
  $\nu_M (k) = \nu_{M'} (k)$. Furthermore,
  if $M$ is finitely nondegenerate, every formal CR-transversal map is a formal biholomorphism.
\end{cor}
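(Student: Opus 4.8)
The plan is to derive the corollary from Corollaries~\ref{c:rigid} and \ref{c:blabla} together with one elementary observation: a formal CR-transversal holomorphic map $H=(F,G)\colon(\CN,0)\to(\CN,0)$ sending $M$ into $M'$ is an automorphism (i.e.\ a formal biholomorphism of $(\CN,0)$) if and only if $\ord\det\bar F_\chi(\chi,0)=0$. To see this I would use that $G(z,0)\equiv 0$ (which follows directly from \eqref{e:fundamental}, as already exploited in the proof of Proposition~\ref{p:invariance}), so that $G_z(0)=0$ and the Jacobian $dH(0)$ is block upper triangular with diagonal blocks $F_z(0)$ and $G_w(0)$. Since $H$ is CR-transversal, $G_w(0)$ is invertible, so $dH(0)$ is invertible precisely when $F_z(0)$ — equivalently $\bar F_\chi(0,0)$ — is invertible, which is exactly the condition $\ord\det\bar F_\chi(\chi,0)=0$; by the formal inverse function theorem this is equivalent to $H$ being an automorphism.

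For the implication ``$\nu_M(k)=\nu_{M'}(k)$ for some $k\geq\kappa_M$ $\Rightarrow$ $H$ is an automorphism'', I would argue as follows. By Corollary~\ref{c:rigid}, $M'\in\mathcal C$ with $\kappa_{M'}\leq\kappa_M$, so for $k\geq\kappa_M$ both $\nu_M(k)$ and $\nu_{M'}(k)$ are finite (and $\ord\det\bar F_\chi(\chi,0)$ is finite as well, $H$ being not totally degenerate). The inequality in Corollary~\ref{c:rigid} reads $\nu_M(k)\geq\nu_{M'}(k)+\ord\det\bar F_\chi(\chi,0)$; combined with the assumed equality $\nu_M(k)=\nu_{M'}(k)$ this forces $\ord\det\bar F_\chi(\chi,0)=0$, and the observation above shows $H$ is an automorphism.

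For the converse, suppose $H$ is an automorphism sending $M$ into $M'$. Here I would invoke the standard fact that a formal biholomorphism of $(\CN,0)$ carrying a formal generic submanifold into another of the same codimension necessarily carries it onto it: both defining ideals are prime of height $d$ in the regular ring $\C\dbl Z,\zeta\dbr$, so the inclusion $\mathcal I(M')\subseteq\mathcal H_*\mathcal I(M)$ is forced to be an equality. Hence $M'=H(M)$ is formally biholomorphically equivalent to $M$, and by the invariance in Corollary~\ref{c:blabla} we get $\kappa_{M'}=\kappa_M$ and $\nu_{M'}(k)=\nu_M(k)$ for every $k$; in particular $\nu_M(k)=\nu_{M'}(k)$ for any $k\geq\kappa_M$. (Alternatively, one can bypass the invariance statement and instead apply Corollary~\ref{c:rigid} to $H^{-1}$ — which is CR-transversal and, by the same ``onto'' fact, sends $M'$ into $M$ — to obtain the reverse inequality $\nu_{M'}(k)\geq\nu_M(k)$.) Finally the last assertion is immediate: if $M$ is finitely nondegenerate then $\nu_M(\infty)=0$, so \eqref{e:unifbd} gives $\ord\det\bar F_\chi(\chi,0)\leq\nu_M(\infty)=0$, whence $H$ is an automorphism by the opening observation.

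Essentially everything here is formal once Corollaries~\ref{c:rigid} and \ref{c:blabla} and inequality \eqref{e:unifbd} are in hand; the only point requiring a bit of care is the ``maps into $\Rightarrow$ maps onto'' statement used in the converse, which I expect to be the main (and rather minor) obstacle. It is classical (see e.g.\ \cite{BERbook}), and, as noted, the converse can in any case be made self-contained modulo the same observation by passing to $H^{-1}$.
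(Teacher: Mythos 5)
Your proof is correct and follows essentially the same route as the paper, which obtains the corollary directly from the inequality $\nu_M(k)\geq\nu_{M'}(k)+\ord\det\bar F_\chi(\chi,0)$ of Corollary~\ref{c:rigid} together with \eqref{e:unifbd}, the CR-transversality criterion ($G_w(0)$ invertible, $G(z,0)\equiv 0$), and the invariance statement of Corollary~\ref{c:blabla} for the converse. Your filling-in of the details the paper leaves implicit (the block-triangular Jacobian observation and the ``into implies onto'' fact for the converse, or equivalently applying Corollary~\ref{c:rigid} to $H^{-1}$) is accurate.
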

\begin{rem}
(i) A criterion analogous to the second part of Corollary~\ref{c:byproduct} for a formal {\em finite} holomorphic mapping to be a biholomorphism was obtained in
\cite[Theorem 6.5]{ER1} under the additional assumption that $M$ is of finite type. In fact, this latter result can also be seen as a consequence of Corollary \ref{c:byproduct} in conjunction with the transversality result \cite[Theorem 3.1]{ER1}. Note also that the second part of Corollary~\ref{c:byproduct} does not hold for finite maps as can be seen by considering $M=M'=\{(z,w_1,w_2)\in \C^3: \imag w_1=|z|^2,\ \imag w_2=0\}$ and $H(z,w_1,w_2)=(z,w_1,w_2^2)$.

(ii) A nice application of the preceding corollary is also a ``one-glance''
proof of the fact that (for example) the hypersurfaces
\[ M_1 \colon \imag w = |z_1|^2 + \real z_1^2 \bar z_2^3 +
\real z_1^4 \bar z_2 + O(6), \quad 
M_2 \colon \imag w = |z_1|^2 + \real z_1^2 \bar z_2^2 + \real z_1^4 \bar z_2 + O(6),
\] are not biholomorphically equivalent; indeed, both are 
finitely nondegenerate, and we have
\[ \kappa_{M_1} = \kappa_{M_2} = 2, \quad   \nu_{M_1} (k)= \nu_{M_2} (k), \text{ for } k\neq 2,
\text{ but } 2 = \nu_{M_1} (2) \neq \nu_{M_2} (2) = 1.\]
\end{rem}

As a consequence of \eqref{e:unifbd} and \cite[Corollary 2.4]{LM4}, we also get 
following  property that under some additional assumptions on the manifolds, tangential flatness up to a certain
order of a given map implies that it is necessarily constant.

\begin{cor}
  Let $M\subset \CN$ be a formal real hypersurface given in normal coordinates as above, and 
  assume that $M\in \mathcal{C}$. Then 
  there exists an integer $k$ such that for every formal real hypersurface
  $M'\subset \CN$ not containing any formal curve and every formal holomorphic map
  $H\colon (\CN,0)\to (\Cn,0)$ sending $M$ into $M'$, $H=(F,G)$ is constant if and only if
  \[ F_{z^\alpha}(0)= 0, \quad 1\leq |\alpha|\leq k.\]
\end{cor}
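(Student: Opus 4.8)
The plan is to deduce the statement from the transversality dichotomy of \cite{LM4} together with the uniform bound \eqref{e:unifbd} already established above. The ``only if'' implication is immediate: a constant formal map sending the origin to the origin is identically zero, hence has all its derivatives equal to $0$; so everything comes down to the converse.

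For the converse I would take $k := \nu_M(\infty) + 1$. This is a well-defined positive integer depending only on $M$, because $M\in\mathcal{C}$ forces $\nu_M(\infty)\leq \nu_M(\kappa_M)<+\infty$. Now let $M'\subset\CN$ be a formal real hypersurface containing no formal curve, and let $H=(F,G)\colon(\CN,0)\to(\CN,0)$ send $M$ into $M'$ and satisfy $F_{z^\alpha}(0)=0$ for $1\leq|\alpha|\leq k$. Combined with $F(0)=0$, this means that the power series $z\mapsto F(z,0)$ either vanishes identically or has order at least $k+1$; in either case every entry of the $n\times n$ matrix $\bar F_\chi(\chi,0)$ has order at least $k$, and therefore, whenever $\det\bar F_\chi(\chi,0)\not\equiv 0$, one has $\ord\det\bar F_\chi(\chi,0)\geq nk\geq k>\nu_M(\infty)$.

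On the other hand, by \cite[Corollary~2.4]{LM4} --- applicable since a formal real hypersurface in the class $\mathcal{C}$ is automatically of finite type and $M'$ contains no formal curve --- the map $H$ is either constant or CR-transversal. If $H$ were CR-transversal, then, because $M\in\mathcal{C}$, the inequality \eqref{e:unifbd} would give $\ord\det\bar F_\chi(\chi,0)\leq\nu_M(\infty)<+\infty$; in particular $\det\bar F_\chi(\chi,0)\not\equiv 0$, so the lower bound from the previous paragraph would be violated. Hence $H$ must be constant, which proves the corollary.

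I do not expect a genuine obstacle here: all the substantive ingredients --- the invariant $\nu_M$ and its invariance, the bound \eqref{e:unifbd} coming from Corollary~\ref{c:rigid}, and the constant-or-CR-transversal dichotomy --- are already in place. The only point to watch is the elementary bookkeeping of orders that links the vanishing of the pure jet $F_{z^\alpha}(0)$ for $|\alpha|\leq k$ to a lower bound on $\ord_\chi\det\bar F_\chi(\chi,0)$; this is precisely what dictates both the appearance of the factor $n$ and the choice $k=\nu_M(\infty)+1$.
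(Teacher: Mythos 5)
Your proof is correct and follows exactly the route the paper intends: the paper derives this corollary directly from the dichotomy of \cite[Corollary~2.4]{LM4} (constant or CR-transversal, using that a hypersurface in $\mathcal{C}$ is of finite type) together with the uniform bound \eqref{e:unifbd}, and your argument simply makes explicit the order bookkeeping linking the vanishing of $F_{z^\alpha}(0)$ for $1\leq|\alpha|\leq k=\nu_M(\infty)+1$ to $\ord\det\bar F_\chi(\chi,0)>\nu_M(\infty)$. No gaps; the choice of $k$ and the contradiction with CR-transversality are exactly as required.
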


For the purposes of this paper, the most important 
consequence of Proposition~\ref{p:invariance} lies in  the following finite
jet determination property.

\begin{cor}\label{c:finitejetfirst} Let $M,M'\subset \CN$  
  be formal generic submanifolds of the same dimension, given in normal coordinates as above. 
  Assume that $M$ belongs to the class ${\mathcal C}$. Then the integer
  \[
  k_0:=\min_{k\geq \kappa_M}
  \max \{k,\nu_M(k)\}
  \]
  satisfies the following property:
  For any pair  $H_1,H_2\colon (\CN,0)\to
  (\CN,0)$ of formal CR-transversal holomorphic mappings sending $M$ into
  $M'$,
  if the $k_0$-jets of $H_1$ and $H_2$ agree, 
  then necessarily $H_1(z,0)=H_2(z,0)$.
  Furthermore,
  $k_0$ depends upper-semicontinuously on continuous deformations of $M$.
\end{cor}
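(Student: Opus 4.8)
The plan is to exploit Proposition~\ref{p:invariance} to show that if two CR-transversal maps $H_1=(F_1,G_1)$ and $H_2=(F_2,G_2)$ sending $M$ into $M'$ have the same $k_0$-jet, then their restrictions to the first Segre set — equivalently, the series $\bar F_j(\chi,0)$ — must coincide. First I would observe, using~\eqref{e:normality} and~\eqref{e:fundamental}, that $G_i(z,0)\equiv 0$, so the restricted map is $H_i(z,0)=(F_i(z,0),0)$; conjugating, it suffices to prove $\bar F_1(\chi,0)=\bar F_2(\chi,0)$. By the definition of class $\mathcal{C}$ we may fix $n$-tuples $\ntal,\nts$ with $\maxlen{\ntal}=\kappa_M$ (hence $\maxlen{\ntal}\le k_0$) such that $\ord D_M^Z(\ntal,\nts)=\nu_M(\kappa_M)$ is finite and $\chi\mapsto\Theta_{\ntal,\nts}(\chi)$ has generic rank $n$. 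Applying~\eqref{e:tired} for both maps and subtracting, and recalling from~\eqref{e:upsilon} that $\Upsilon^{H_i}_{\ntal,\nts}(\chi')=\Phi_{\ntal,\nts}(\Theta'_{\maxlen{\ntal}}(\chi'),j_0^{\maxlen{\ntal}}H_i)$ depends on $H_i$ only through its $\maxlen{\ntal}$-jet, the hypothesis $j_0^{k_0}H_1=j_0^{k_0}H_2$ forces $\Upsilon^{H_1}_{\ntal,\nts}=\Upsilon^{H_2}_{\ntal,\nts}=:\Upsilon_{\ntal,\nts}$ as germs of holomorphic maps. Thus we arrive at the single functional equation
\[
\Upsilon_{\ntal,\nts}(\bar F_1(\chi,0))=\Theta_{\ntal,\nts}(\chi)=\Upsilon_{\ntal,\nts}(\bar F_2(\chi,0))
\]
in $\C\dbl\chi\dbr$, where $\Upsilon_{\ntal,\nts}$ is a holomorphic map germ $(\C^n_{\chi'},0)\to(\C^n,0)$ whose Jacobian has finite order: indeed by Remark~\ref{r:order} (and~\eqref{e:key}) the order of $\chi\mapsto\det(\partial\Upsilon_{\ntal,\nts}/\partial\chi')(\bar F_1(\chi,0))$ is bounded by $\nu_M(\kappa_M)<+\infty$, and in particular $\Upsilon_{\ntal,\nts}$ is not totally degenerate, i.e.\ has generic rank $n$.

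The core of the argument is then the following elementary fact about formal maps: if $\Psi\colon(\C^n,0)\to(\C^n,0)$ is a formal holomorphic map of generic rank $n$, and $g_1,g_2\colon(\C^m,0)\to(\C^n,0)$ are two formal maps with $\Psi\circ g_1=\Psi\circ g_2$, then $g_1=g_2$. To see this, write $\Psi=(\psi_1,\dots,\psi_n)$; after a generic linear change of target coordinates the functions $\psi_1,\dots,\psi_n$ can be assumed algebraically independent over $\C$ (equivalently $\det\partial\Psi/\partial\chi'\not\equiv 0$), so $\Psi$ is dominant and hence the field extension $\C(\chi')/\C(\psi_1,\dots,\psi_n)$ is algebraic and finite; separability is automatic in characteristic $0$. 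Each coordinate function $\chi'_\ell$ then satisfies a monic polynomial relation $P_\ell(\psi;\chi'_\ell)=0$ with coefficients rational in $\psi$, and after clearing denominators and differentiating we obtain $R_\ell(\psi)\,\chi'_\ell=S_\ell(\psi,\chi'_\ell,\dots)$ type identities — more cleanly: $\chi'_\ell\in\C(\psi_1,\dots,\psi_n)^{\mathrm{alg}}$, and an algebraic function of $\psi$ composed with $g_i$ depends only on $\psi\circ g_i=\Psi\circ g_i$. Hence $g_1^{(\ell)}=\chi'_\ell\circ g_1$ and $g_2^{(\ell)}=\chi'_\ell\circ g_2$ are both equal to the same algebraic function evaluated at the common series $\Psi\circ g_1=\Psi\circ g_2$, giving $g_1=g_2$. (Alternatively, one argues directly in the completed local ring: $\det\partial\Psi/\partial\chi'\not\equiv 0$ means $\Psi^*\colon\C\dbl\chi'\dbr\to\C\dbl\chi\dbr$ is injective on the image appropriately, and a Weierstrass/implicit-function argument along a generic line where the Jacobian is a unit recovers $g_i$ from $\Psi\circ g_i$.)

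Applying this with $\Psi=\Upsilon_{\ntal,\nts}$, $g_i=\overline{F_i(\cdot,0)}$ (note $g_i(0)=0$ since $H_i(0)=0$), and using that $\Upsilon_{\ntal,\nts}$ has generic rank $n$, we conclude $\bar F_1(\chi,0)=\bar F_2(\chi,0)$, hence $F_1(z,0)=F_2(z,0)$ and therefore $H_1(z,0)=H_2(z,0)$, as required. Finally, for the upper-semicontinuity statement: by Corollary~\ref{c:blabla} the integer $\kappa_M$ and the quantities $\nu_M(k)$ are biholomorphic invariants of $M$, and by the remarks in \S\ref{ss:nondeg} both $t\mapsto\kappa_{M_t}$ and $t\mapsto\nu_{M_t}(k)$ are upper-semicontinuous along any continuous family $(M_t)_{t\in T}$. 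Since $k_0=\min_{k\ge\kappa_M}\max\{k,\nu_M(k)\}$ is obtained from these invariants by operations (finite max, shifted min over $k\ge\kappa_M$) that preserve upper-semicontinuity — here one uses that near $t_0$ one has $\kappa_{M_t}\le\kappa_{M_{t_0}}$ and $\nu_{M_t}(k)\le\nu_{M_{t_0}}(k)$, and that the min is attained at some fixed finite $k$ for $t$ near $t_0$ — it follows that $t\mapsto k_0(M_t)$ is upper-semicontinuous. The main obstacle is the ``core fact'': making rigorous that a formal map of generic full rank is left-cancellable when precomposed, i.e.\ $\Psi\circ g_1=\Psi\circ g_2\Rightarrow g_1=g_2$; this is where the finiteness of $\ord\det\partial\Upsilon_{\ntal,\nts}/\partial\chi'$ (hence $\Upsilon_{\ntal,\nts}\not\equiv$ degenerate), guaranteed by $M\in\mathcal C$ and Remark~\ref{r:order}, is essential, and one must be careful that the generic-rank hypothesis alone suffices (it does, in characteristic zero, by the algebraic-dependence argument above).
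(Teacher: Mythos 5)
Your reduction to showing $\bar F_1(\chi,0)=\bar F_2(\chi,0)$, and the observation that $j_0^{k_0}H_1=j_0^{k_0}H_2$ forces $\Upsilon^{H_1}_{\ntal,\nts}=\Upsilon^{H_2}_{\ntal,\nts}$, agree with the paper. But the ``core fact'' your argument hinges on --- that a formal map $\Psi\colon(\C^n,0)\to(\C^n,0)$ of generic rank $n$ is left-cancellable, i.e.\ $\Psi\circ g_1=\Psi\circ g_2\Rightarrow g_1=g_2$ --- is false. Take $n=m=1$, $\Psi(x)=x^2$, $g_1(t)=t$, $g_2(t)=-t$: here $\det \partial\Psi/\partial x=2x\not\equiv 0$, yet $\Psi\circ g_1=\Psi\circ g_2$ while $g_1\neq g_2$. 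Your algebraic argument breaks exactly at the phrase ``an algebraic function of $\psi$ composed with $g_i$ depends only on $\Psi\circ g_i$'': algebraicity of $\chi'_\ell$ over $\C(\psi_1,\dots,\psi_n)$ only says that each $\chi'_\ell\circ g_i$ is a root of one and the same polynomial with coefficients depending on $\Psi\circ g_1=\Psi\circ g_2$, and the two maps may select different roots (as in the example). A telltale sign is that if your lemma were true, the corollary would already hold with $k_0=\kappa_M$, and the invariant $\nu_M$ would be irrelevant to the statement.

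What is actually needed --- and what the paper does --- is to use quantitatively that $\bar F_1(\chi,0)$ and $\bar F_2(\chi,0)$ already agree up to order $k_0$, an item of information you establish but then discard. Writing $\Upsilon(y)-\Upsilon(x)=(y-x)\cdot\int_0^1\frac{\partial\Upsilon}{\partial\chi'}\bigl(ty+(1-t)x\bigr)dt$ with $x=\bar F_1(\chi,0)$, $y=\bar F_2(\chi,0)$, one must show the determinant of the averaged Jacobian is $\not\equiv 0$. By Remark~\ref{r:order}, $\ord\,\det\frac{\partial\Upsilon}{\partial\chi'}(\bar F_2(\chi,0))\leq\nu_M(\tilde k)$; since the segment $t\bar F_2+(1-t)\bar F_1$ differs from $\bar F_2(\chi,0)$ only in order $>k_0\geq\nu_M(\tilde k)$, the averaged determinant agrees with this one to order beyond its vanishing order, hence is nonzero, and cancellation is then legitimate. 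This is precisely where the $\nu_M$ part of $k_0$ enters. A secondary point: the paper first picks $\tilde k$ realizing $k_0=\min_{k\geq\kappa_M}\max\{k,\nu_M(k)\}$ and then $\ntal,\nts$ with $\maxlen{\ntal}\leq\tilde k$ and $\ord D_M^Z(\ntal,\nts)=\nu_M(\tilde k)$; your choice $\maxlen{\ntal}=\kappa_M$ would, even after repairing the cancellation step as above, only yield the possibly larger bound $\max\{\kappa_M,\nu_M(\kappa_M)\}$, since $k_0$ can be strictly smaller than $\nu_M(\kappa_M)$. Your closing paragraph on upper-semicontinuity is fine and matches the paper.
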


\begin{proof} 
  Let
  $\tilde k$ be an integer with 
  $\max \left\{ \tilde k,\nu_{M}(\tilde k) \right\} = k_0$.
  We choose $\ntal = (\alpha^{(1)},\dots,\alpha^{(n)})$
  with $\maxlen{\ntal}\leq \tilde k$ and 
  $\nts = (s_1,\dots,s_n)$ such that
  $\ord D_M^Z (\ntal,\nts) = \nu_M (\tilde k)$.   
  We use the notation of the proof of Corollary \ref{c:rigid}, 
  in particular, we consider the
  function 
  $\Upsilon_{{\ntal,\nts}}^{H_j}$ defined there, 
  with this choice of $\ntal$ and
  $\nts$ and for a given pair $H_1,H_2$ 
  of formal CR-transversal maps satisfying
  $j_0^{k_0}H_1=j_0^{k_0}H_2$.
  In view of \eqref{e:upsilon}, we have
  \begin{equation}
    \Upsilon_{{\ntal,\nts}}^{H_1}(\chi')
    =\Upsilon_{{\ntal,\nts}}^{H_2}(\chi')
    =:\Upsilon_{{\ntal,\nts}}(\chi').
  \end{equation} 
  We write $H_j=(F_j,G_j)\in \C^n\times \C^d$, $j=1,2$. We now
  claim that $\bar F_1(\chi,0)=\bar F_2(\chi,0)$ which yields the desired
  result. Indeed first note that the identity 
  $$\Upsilon_{{\ntal,\nts}} (y)-\Upsilon_{{\ntal,\nts}}
  (x)=(y-x)\cdot \int_0^1\frac{\partial \Upsilon_{{\ntal,\nts}}}{\partial \chi'}(ty+(1-t)x)dt,$$ gives in view of
  \eqref{e:tired} and \eqref{e:upsilon} that
  \begin{equation} 0=(\bar
    F_2(\chi,0)-\bar F_1(\chi,0))\cdot \int_0^1
    \frac{\partial \Upsilon_{{\ntal,\nts}}}{\partial \chi'}(t\bar
    F_2(\chi,0)+(1-t)\bar F_1(\chi,0))dt. 
  \end{equation} 
  To prove the claim,
  it is therefore enough to show that 
  \begin{equation}\label{e:detnonzero}
    {\rm det} \left( \int_0^1\frac{\partial \Upsilon_{{\ntal,\nts}}}{\partial \chi'}(t\bar F_2(\chi,0)+(1-t)\bar
    F_1(\chi,0))dt\right)\not \equiv 0.  
  \end{equation} 
  By Remark
  \ref{r:order}, the order of the power series $\chi \mapsto {\rm det}\,
  \left( \displaystyle \frac{\partial \Upsilon_{{\ntal,\nts}}}{\partial \chi'}(\bar F_2(\chi,0))\right)$ is at 
  most $\nu_M (\tilde k)$ and
  since $\bar F_1(\chi,0)$ agrees with $\bar F_2 (\chi,0)$ up to order
  $k_0\geq \nu_M (\tilde k)$, it follows that \eqref{e:detnonzero} automatically
  holds. The proof of Corollary \ref{c:finitejetfirst} is complete, up 
  to the upper-semicontinuity of the integer $k_0$, which is 
  a direct consequence of the upper-semicontinuity on continuous deformations of $M$ of the numerical invariants $\kappa_M$ and $\nu_M(k)$ for all $k\in \N^*\cup \{\infty\}$.
\end{proof}

\subsection{Finite jet determination of the derivatives on the first Segre set}\label{ss:jetdetderiv}
Our next goal is to establish a finite jet determination property similar to that obtained
in Corollary~\ref{c:finitejetfirst}, but this time for the derivatives of the
maps. For this, we will need a number of small technical lemmas. In what
follows, for every positive integer $\ell$, we write $\widehat j_{\zeta}^{\ell}
\bar{H}$ for $(\partial_{\zeta}^{\alpha}\bar H (\zeta))_{1\leq |\alpha|\leq
\ell}$ and similarly for $\widehat j_{Z}^{\ell} {H}$ to mean
$(\partial_{Z}^{\alpha} H (Z))_{1\leq |\alpha|\leq \ell}$. 
 We also keep the notation introduced in previous sections. We start with the
following.

\begin{lem}\label{l:computecomplex}
Let $M,M'\subset \CN$ be formal generic submanifolds  of codimension $d$ given in normal coordinates as above.
Then for every multiindex $\mu \in \N^d\setminus \{0\}$, there exists a universal $\C^d$-valued power series mapping  ${\mathcal S}_\mu=
{\mathcal S}_\mu (Z,\zeta, Z',\zeta';\cdot)$ polynomial in its last argument with coefficients in the ring $\C \dbl Z,\zeta,Z',\zeta'\dbr$ such that for every formal holomorphic map $H\colon (\CN,0)\to (\CN,0)$ sending $M$ into $M'$ with $H=(F,G)\in \C^n\times \C^d$, the following identity holds for $(Z,\zeta)\in {\mathcal M}$:
\begin{equation}\label{e:jams}
\bar F_{\tau^\mu}(\zeta)\cdot Q_{\chi'}'(f(Z),\bar H(\zeta))=
{\mathcal S}_{\mu}\left(Z,\zeta,H(Z),\bar H(\zeta);\widehat j_{Z}^{|\mu|} {H}, (\bar F_{\tau^\gamma}(\zeta))_{|\gamma|\leq |\mu|-1}, (\bar G_{\tau^\eta}(\zeta))_{|\gamma|\leq |\mu|} \right).
\end{equation}
\end{lem}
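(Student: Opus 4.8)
The plan is to differentiate the fundamental identity \eqref{e:fundamental} (more precisely, its avatar on $\mathcal M$) with respect to the $\tau$-variables and to extract from the resulting equations a linear system for the quantities $\bar F_{\tau^\mu}(\zeta)$, solving it by triangularity in $|\mu|$. To set things up, recall that on $\mathcal M$ we may use the defining equations $w = Q(z,\chi,\tau)$ and $\bar w$-slot written via $\tau$, so that \eqref{e:fundamental} reads, after substituting $w = Q(z,\chi,\tau)$,
\[
G(z,Q(z,\chi,\tau)) = Q'\bigl(F(z,Q(z,\chi,\tau)),\bar F(\chi,\tau),\bar G(\chi,\tau)\bigr).
\]
Here the pair $(\chi,\tau)$ plays the role of $\zeta = (\chi,\tau)$. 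I would differentiate this with respect to $\tau^\mu$, $\mu\in\N^d\setminus\{0\}$. On the left the chain rule produces $G_w\cdot Q_{\tau^\mu}$ plus lower-order terms (in $|\mu|$) times derivatives of $G$ evaluated at $(z,Q)$, i.e.\ expressible through $\widehat j_Z^{|\mu|}H$ composed with the known functions $Q_{\tau^\gamma}$, $|\gamma|\le|\mu|$. On the right, the chain rule gives a sum of terms; the ones containing $\bar F_{\tau^\mu}$ and $\bar G_{\tau^\mu}$ appear linearly, with coefficients $Q'_{\chi'}\bigl(F,\bar F,\bar G\bigr)$ and $Q'_{\tau'}\bigl(F,\bar F,\bar G\bigr)$ respectively, while all remaining terms involve only $\bar F_{\tau^\gamma}$, $\bar G_{\tau^\gamma}$ with $|\gamma|\le|\mu|-1$, derivatives of $F$ at $F(z,Q)$ (hence $\widehat j_Z^{|\mu|}H$ composed with known data), and derivatives of $Q'$, which are universal power series in $(Z',\zeta')$ evaluated at $(F(z,Q),\bar F(\chi,\tau),\bar G(\chi,\tau))$.

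The key simplification is to evaluate on the first Segre set, i.e.\ to restrict the $\mathcal M$-identity to $z$ arbitrary and $\chi=\tau=0$ inside the $\zeta$-slot is not quite what is wanted; rather, one exploits the normalization \eqref{e:normality}: $Q'(z',0,\tau')=\tau'$, hence $Q'_{\chi'}(z',0,\tau') = 0$ is false in general, but $Q'_{\tau'}(z',0,\tau') = \mathrm{Id}$. More to the point, differentiating the normalization relations of $M'$ I would first eliminate $\bar G_{\tau^\mu}$ in terms of $\bar F_{\tau^\mu}\cdot Q'_{\chi'}$ and lower-order data by using the $\mu$-th $\tau$-derivative of the scalar identity encoding $G$ on $\mathcal M$ together with $Q'_{\tau'}$ being a unit matrix at the relevant point; this is exactly the mechanism that turns the two-unknown linear system (in $\bar F_{\tau^\mu}$ and $\bar G_{\tau^\mu}$) into a single relation of the form asserted in \eqref{e:jams}. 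Concretely, from the $\tau^\mu$-derivative of $\bar G$-side normalization one gets $\bar G_{\tau^\mu}(\zeta)$ expressed polynomially through $\bar F_{\tau^\gamma}$, $|\gamma|\le|\mu|$, and derivatives of $\bar Q$ (a universal series), and substituting this back collapses the right-hand side to $\bar F_{\tau^\mu}(\zeta)\cdot Q'_{\chi'}(F(Z),\bar H(\zeta))$ plus a universal polynomial in the remaining arguments. Moving everything else to the right then yields \eqref{e:jams} with ${\mathcal S}_\mu$ assembled from the universal pieces: the chain-rule multinomial coefficients, the derivatives of $Q$ and $Q'$ (as explicit universal power series in $Z,\zeta,Z',\zeta'$), and the specified jets of $H$ and lower $\tau$-derivatives of $\bar F,\bar G$.

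Induction on $|\mu|$ organizes the bookkeeping: for $|\mu|=1$ the computation is a direct single differentiation (this is essentially \eqref{e:dday}-type algebra transported to the $\tau$-variables), and the inductive step differentiates once more, peeling off the new top-order terms while all freshly produced lower-order terms are, by the inductive hypothesis, already of the permitted form. The polynomial dependence on the last block of arguments is automatic because each differentiation of a composition introduces only polynomial (multinomial) combinations of derivatives. The main obstacle I anticipate is purely combinatorial: carefully tracking, via the Fa\`a di Bruno / multivariate Leibniz formula, which compositions of derivatives of $F$, $G$ get evaluated at $F(Z)$ versus at $(Z,Q)$, and verifying that every such evaluated object is one of $\widehat j_Z^{|\mu|}H$, $(\bar F_{\tau^\gamma})_{|\gamma|\le|\mu|-1}$, $(\bar G_{\tau^\eta})_{|\eta|\le|\mu|}$, or a universal series in $(Z,\zeta,Z',\zeta')$ — in particular checking that no derivative of $F$ of order $>|\mu|$ and no $\bar F_{\tau^\gamma}$ with $|\gamma|\ge|\mu|$ other than the isolated linear term $\bar F_{\tau^\mu}$ survives. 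The normalization \eqref{e:normality} of both $M$ and $M'$ is the crucial structural input that makes this elimination possible.
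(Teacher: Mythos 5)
Your overall skeleton — differentiate the fundamental identity \eqref{e:fundamental} with respect to $\tau^\mu$, observe that $\bar F_{\tau^\mu}$ and $\bar G_{\tau^\mu}$ enter linearly with coefficients $Q'_{\chi'}$ and $Q'_{\tau'}$ evaluated at $(F(Z),\bar H(\zeta))$, and organize the bookkeeping by induction on $|\mu|$ — is exactly the argument the paper has in mind (its proof is left to the reader). However, the middle step of your plan, the ``elimination'' of $\bar G_{\tau^\mu}$, is both unnecessary and, as you describe it, incorrect. It is unnecessary because $\bar G_{\tau^\eta}$ for \emph{all} $|\eta|\leq|\mu|$, including $\eta=\mu$, is explicitly among the permitted arguments of $\mathcal S_\mu$ in \eqref{e:jams}; so after the $\tau^\mu$-differentiation you may simply move the term $\bar G_{\tau^\mu}(\zeta)\cdot Q'_{\tau'}(F(Z),\bar H(\zeta))$ (whose coefficient is a universal series evaluated at $(H(Z),\bar H(\zeta))$) into $\mathcal S_\mu$ together with all the Fa\`a di Bruno terms, and \eqref{e:jams} drops out with no further work.

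The step is also flawed as stated, for two reasons. First, if you express $\bar G_{\tau^\mu}$ through the conjugated map identity (this is precisely \eqref{e:gmu} of Lemma~\ref{l:compute}), namely $\bar G_{\tau^\mu}=\bar F_{\tau^\mu}\cdot \bar Q'_{\chi'}(\bar F(\zeta),H(Z))+\cdots$, and substitute it back, the right-hand side does \emph{not} ``collapse'' to $\bar F_{\tau^\mu}\cdot Q'_{\chi'}(F(Z),\bar H(\zeta))$ plus allowed data: the substitution reintroduces $\bar F_{\tau^\mu}$ with the extra matrix coefficient $\bar Q'_{\chi'}(\bar F(\zeta),H(Z))\cdot Q'_{\tau'}(F(Z),\bar H(\zeta))$, so you end up with $\bar F_{\tau^\mu}\cdot\bigl(Q'_{\chi'}+\bar Q'_{\chi'}\cdot Q'_{\tau'}\bigr)$ on the left — a different (reflection-type) identity, closer to \eqref{e:fundamental3}, not the one claimed, and $\bar F_{\tau^\mu}$ is not an allowed argument of $\mathcal S_\mu$ (only $|\gamma|\leq|\mu|-1$ is). Second, your appeal to the normalization \eqref{e:normality}, ``$Q'_{\tau'}$ is the identity matrix at the relevant point,'' is not available: the identity \eqref{e:jams} is asserted on all of $\mathcal M$, where the evaluation point is $\chi'=\bar F(\zeta)$, generically nonzero; $Q'_{\tau'}$ there is only invertible (it is $\mathrm{Id}$ plus higher-order terms), not the identity, and in any case no invertibility is needed once you keep $\bar G_{\tau^\mu}$ as an argument. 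Dropping the elimination step entirely repairs the proof and reduces it to the paper's intended one-differentiation-plus-induction argument.
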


\begin{proof} The proof follows easily by induction and
  differentiating \eqref{e:fundamental} with respect to $\tau$. 
  We leave the details of this to the reader.
\end{proof}

The following lemma is stated in \cite[Lemma 9.3]{LM2} for the case of
biholomorphic self-maps of real-analytic generic submanifolds but it
(along with the proof) also applies
to the case of arbitrary
formal holomorphic maps between formal generic submanifolds.

\begin{lem}\label{l:compute} Let $M,M'\subset \CN$ be formal generic submanifolds of
  codimension $d$ given in normal coordinates as above. Then for every
  multiindex $\mu \in \N^d\setminus \{0\}$, there exists a universal
  $\Cd$-valued power series mapping $W_\mu (Z,\zeta,Z',\zeta';\cdot)$ polynomial in its last argument 
   with coefficients in the ring $\C \dbl Z,\zeta,Z',\zeta' \dbr$ such
  that for every formal holomorphic map $H\colon (\CN,0)\to (\CN,0)$ sending
  $M$ into $M'$ with $H=(F,G)\in \C^n\times \C^d$ the following identity holds
  \begin{equation}\label{e:gmu} \bar G_{\tau^\mu}(\zeta)=\bar
    F_{\tau^\mu}(\zeta)\cdot \bar Q_{\chi'}'(\bar F(\zeta),{H}(Z))+ W_\mu
    \left(Z,\zeta,H(Z),\bar{H}(\zeta); \widehat j_{Z}^{|\mu|} {H},\widehat
    j_{\zeta}^{|\mu|-1} {\bar H}\right).  \end{equation} In particular, there
    exists a universal $\C^d$-valued polynomial map ${\mathcal R}_\mu={\mathcal
    R}_\mu(\chi,\chi';\cdot)$ of its arguments with coefficients in the ring
    $\C \dbl \chi,\chi' \dbr$ such that for every map $H$ as above, the
    following holds: \begin{equation}\label{e:gderivative} \bar
      G_{\tau^\mu}(\chi,0)={\mathcal R}_\mu \left(\chi,\bar
      F(\chi,0);(\partial^\beta \bar H(\chi,0))_{1\leq |\beta|\leq
      |\mu|-1},j_0^{|\mu|}H\right), \end{equation} \end{lem}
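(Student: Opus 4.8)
The plan is to start from the identity that records $H(M)\subset M'$ in normal coordinates. Expressing membership of $\mathcal{H}(Z,\zeta)=(H(Z),\bar H(\zeta))$ in ${\mathcal M}'$ through the defining equations $\tau'=\bar Q'(\chi',z',w')$ of $M'$ gives, for $(Z,\zeta)\in{\mathcal M}$,
\[
\bar G(\zeta)=\bar Q'\bigl(\bar F(\zeta),H(Z)\bigr);
\]
parametrizing ${\mathcal M}$ by $(z,\chi,\tau)\mapsto\bigl((z,Q(z,\chi,\tau)),(\chi,\tau)\bigr)$ — equivalently, reading the identities below modulo ${\mathcal I}(M)$ — this becomes a bona fide power series identity in $\C\dbl z,\chi,\tau\dbr$ in which $\bar F$ and $\bar G$ occur undifferentiated in the $\tau$-variables. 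From here I would prove \eqref{e:gmu} by induction on $|\mu|$, differentiating this identity by $\partial_{\tau_j}$ inside the ring $\C\dbl z,\chi,\tau\dbr$, and then deduce \eqref{e:gderivative} by restricting \eqref{e:gmu} to $Z=0$.

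For the base case $|\mu|=1$, differentiating the displayed identity in $\tau_j$ and applying the chain rule on the right yields
\[
\bar G_{\tau_j}(\zeta)=\bar F_{\tau_j}(\zeta)\cdot\bar Q_{\chi'}'\bigl(\bar F(\zeta),H(Z)\bigr)+\Bigl(\bar Q_{z'}'\bigl(\bar F(\zeta),H(Z)\bigr)\,F_w(Z)+\bar Q_{w'}'\bigl(\bar F(\zeta),H(Z)\bigr)\,G_w(Z)\Bigr)Q_{\tau_j},
\]
where $Q_{\tau_j}=Q_{\tau_j}(z,\chi,\tau)$ and, on the right, $F(Z),G(Z)$ abbreviate $F(z,Q(z,\chi,\tau)),G(z,Q(z,\chi,\tau))$. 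The parenthesized summand depends only on $(Z,\zeta)$ through $H(Z)$ and $\bar H(\zeta)$, on $\widehat j_Z^1 H$, and on the universal series $Q_{\tau_j}$ and the first partials of $\bar Q'$, so it is of the form $W_{e_j}(Z,\zeta,H(Z),\bar H(\zeta);\widehat j_Z^1 H)$ as required. For the inductive step, assuming \eqref{e:gmu} for all $\mu$ with $|\mu|=\ell\geq 1$ (with $e_j\in\N^d$ the $j$-th unit multiindex), I would apply $\partial_{\tau_j}$: differentiating the main term $\bar F_{\tau^\mu}(\zeta)\cdot\bar Q_{\chi'}'(\bar F(\zeta),H(Z))$ produces the new main term $\bar F_{\tau^{\mu+e_j}}(\zeta)\cdot\bar Q_{\chi'}'(\bar F(\zeta),H(Z))$ together with $\bar F_{\tau^\mu}(\zeta)$ times $\partial_{\tau_j}\bigl[\bar Q_{\chi'}'(\bar F(\zeta),H(Z))\bigr]$, while $\partial_{\tau_j}W_\mu$ is computed by the chain rule. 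Using that on ${\mathcal M}$ one has $Z=(z,Q(z,\chi,\tau))$, so that $\partial_{\tau_j}$ applied to any $\partial_Z^\alpha H(Z)$ equals $(\partial_Z^{\alpha+e}H)(Z)\,Q_{\tau_j}$ (with $e$ the multiindex in the $w_j$-direction), one verifies: every $\tau$-derivative of $\bar H$ that appears — namely $\bar F_{\tau^\mu}$, $\bar F_{\tau_j}$, $\bar H_{\tau_j}$ and derivatives of the entries of $\widehat j_\zeta^{\ell-1}\bar H$ — has order $\leq\ell=|\mu+e_j|-1$; every $Z$-derivative of $H$ that appears — namely $F_w,G_w,H_w$ and derivatives of the entries of $\widehat j_Z^{\ell}H$ — has order $\leq\ell+1=|\mu+e_j|$; and all remaining factors are universal series built from $Q$, $Q'$ and their derivatives, hence absorbable into the coefficients. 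Regrouping exhibits the right side as $\bar F_{\tau^{\mu+e_j}}(\zeta)\cdot\bar Q_{\chi'}'(\bar F(\zeta),H(Z))+W_{\mu+e_j}(Z,\zeta,H(Z),\bar H(\zeta);\widehat j_Z^{|\mu+e_j|}H,\widehat j_\zeta^{|\mu+e_j|-1}\bar H)$ with $W_{\mu+e_j}$ of the desired type, closing the induction.

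To get \eqref{e:gderivative}, I would restrict \eqref{e:gmu} to the point of ${\mathcal M}$ with $Z=0$; by normality \eqref{e:normality} ($Q(0,\chi,\tau)=\tau$) this forces $\zeta=(\chi,0)$. Three facts then make the right side collapse: $H(0)=0$; the normality $\bar Q'(\chi',0,w')=w'$ of $M'$, which gives $\bar Q_{\chi'}'(\chi',0,0)\equiv 0$ and hence kills the main term $\bar F_{\tau^\mu}(\chi,0)\cdot\bar Q_{\chi'}'(\bar F(\chi,0),0,0)$; and $G(z,0)\equiv 0$ (already noted in the proof of Proposition~\ref{p:invariance}), which gives $\bar G(\chi,0)=0$, so that $\bar H(\chi,0)=(\bar F(\chi,0),0)$ and $\widehat j_\zeta^{|\mu|-1}\bar H(\chi,0)=(\partial^\beta\bar H(\chi,0))_{1\leq|\beta|\leq|\mu|-1}$. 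Since moreover $\widehat j_Z^{|\mu|}H(0)$ is determined by $j_0^{|\mu|}H$, I would set ${\mathcal R}_\mu(\chi,\chi';\,\cdot\,):=W_\mu\bigl(0,(\chi,0),0,(\chi',0);\,\cdot\,\bigr)$ — the last slot, originally holding $\widehat j_Z^{|\mu|}H$ and $\widehat j_\zeta^{|\mu|-1}\bar H$, now being fed with the jet $j_0^{|\mu|}H$ and with $(\partial^\beta\bar H(\chi,0))_{1\leq|\beta|\leq|\mu|-1}$, and $\chi'$ kept symbolic — which is a universal $\C^d$-valued map, polynomial in the last slot, with coefficients in $\C\dbl\chi,\chi'\dbr$, and for which \eqref{e:gderivative} holds after substituting $\chi'=\bar F(\chi,0)$.

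The one genuinely delicate point will be the order bookkeeping in the inductive step: one must make sure that $\bar F_{\tau^{\mu+e_j}}$, coming from differentiating the main term, is the only occurrence of a $\tau$-derivative of $\bar H$ of the top order $|\mu+e_j|$ and that it is isolated into the new main term, whereas $\bar F_{\tau^\mu}$ and the various order-one derivatives of $H$ and $\bar H$ spun off by the chain rule all respect the admissible orders $|\mu+e_j|$ for $H$ and $|\mu+e_j|-1$ for $\bar H$, and so legitimately enter the body $W_{\mu+e_j}$; the $Z$-derivative half of this check rests on the observation that passing to ${\mathcal M}$ makes each $\partial_{\tau_j}$ raise the order of a $Z$-derivative of $H$ by exactly one.
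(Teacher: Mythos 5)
Your proof is correct and is exactly the argument the paper intends: the paper omits the proof, referring to \cite[Lemma 9.3]{LM2}, and the derivation there (like the sketch given for Lemma~\ref{l:computecomplex}) is precisely your induction on $|\mu|$, differentiating the conjugate identity $\bar G(\zeta)=\bar Q'(\bar F(\zeta),H(Z))$ on ${\mathcal M}$ with respect to $\tau$ in the parametrization $(z,\chi,\tau)$, and then evaluating at $Z=0$, $\zeta=(\chi,0)$ (using $H(0)=0$, $G(z,0)\equiv 0$ and the normality of $Q'$) to obtain \eqref{e:gderivative}. The only cosmetic slip is the chain-rule shorthand $(\partial_Z^{\alpha+e}H)(Z)\,Q_{\tau_j}$, which should be the sum $\sum_{k=1}^d(\partial_{w_k}\partial_Z^{\alpha}H)(Z)\,Q^k_{\tau_j}$ over the $d$ transversal directions; this does not affect the argument.
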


Combining Lemma \ref{l:computecomplex} and Lemma \ref{l:compute} together, we
get the following.

\begin{lem}\label{l:onemore} In the situation of Lemma~{\rm
  \ref{l:computecomplex}}, there exists, for every multiindex $\mu \in
  \N^d\setminus \{0\}$, a universal $\C^d$-valued power series mapping ${\mathcal
  A}_\mu={\mathcal A}_\mu(z,\chi,Z',\zeta',;\cdot)$ polynomial in its last argument with
  coefficients in the ring $\C \dbl z,\chi,Z',\zeta'\dbr$ such that for every
  formal holomorphic map $H\colon (\CN,0)\to (\CN,0)$ sending $M$ into $M'$
  with $H=(F,G)\in \C^n\times \C^d$ the following identity holds
  \begin{multline}\label{e:ma} \bar F_{\tau^\mu}(\chi,0)\cdot
    Q_{\chi'}'(F(z,Q(z,\chi,0)),\bar H(\chi,0))=\\ {\mathcal
    A}_{\mu}\left(z,\chi,H(z,Q(z,\chi,0)),\bar H(\chi,0); ((\partial^\beta
    H)(z,Q(z,\chi,0)))_{1\leq |\beta|\leq |\mu|}, (\partial^\beta \bar
    H(\chi,0))_{1\leq |\beta|\leq |\mu|-1}, j_0^{|\mu|}H \right).
  \end{multline}
\end{lem}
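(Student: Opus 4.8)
The plan is to combine the two identities from Lemma~\ref{l:computecomplex} and Lemma~\ref{l:compute} by substituting the explicit expression for the Segre variety $w=Q(z,\chi,\tau)$ into the identity of Lemma~\ref{l:computecomplex} and then eliminating the ``bad'' derivative terms $(\bar G_{\tau^\eta}(\zeta))$ appearing there by means of the identity of Lemma~\ref{l:compute}. More precisely, the starting point is \eqref{e:jams}, which holds for $(Z,\zeta)\in{\mathcal M}$; the simplest way to produce an honest power series identity is to plug in the parametrization $Z=(z,Q(z,\chi,\tau))$, $\zeta=(\chi,\tau)$, which lies on ${\mathcal M}$ by construction of normal coordinates. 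Then I would specialize further to $\tau=0$, so that $\zeta=(\chi,0)$ and $Z=(z,Q(z,\chi,0))$, which is precisely the combination of arguments appearing on the left-hand side of \eqref{e:ma}.

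With that substitution, the left side of \eqref{e:jams} becomes exactly the left side of \eqref{e:ma}, namely $\bar F_{\tau^\mu}(\chi,0)\cdot Q'_{\chi'}(F(z,Q(z,\chi,0)),\bar H(\chi,0))$. On the right side of \eqref{e:jams} we now have the universal mapping ${\mathcal S}_\mu$ evaluated at $Z=(z,Q(z,\chi,0))$, $\zeta=(\chi,0)$, at $H(Z)$, $\bar H(\zeta)$, at $\widehat j^{|\mu|}_Z H$ evaluated at $(z,Q(z,\chi,0))$, at $(\bar F_{\tau^\gamma}(\chi,0))_{|\gamma|\le|\mu|-1}$, and at $(\bar G_{\tau^\eta}(\chi,0))_{|\eta|\le|\mu|}$. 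The only ingredients here that are not yet of the form allowed in \eqref{e:ma} are the $\bar G_{\tau^\eta}(\chi,0)$ for $1\le|\eta|\le|\mu|$: everything else is already a derivative of $H$ or $\bar H$ evaluated at one of the two permitted points, a coefficient power series in $(z,\chi)$, or $j_0^{|\mu|}H$ once one expands the coefficients. The key step is therefore to invoke \eqref{e:gderivative} of Lemma~\ref{l:compute}, which expresses each $\bar G_{\tau^\eta}(\chi,0)$ as ${\mathcal R}_\eta(\chi,\bar F(\chi,0);(\partial^\beta\bar H(\chi,0))_{1\le|\beta|\le|\eta|-1},j_0^{|\eta|}H)$. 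Since $|\eta|\le|\mu|$, all the derivatives of $\bar H$ appearing there are among $(\partial^\beta\bar H(\chi,0))_{1\le|\beta|\le|\mu|-1}$ and all jets among $j_0^{|\mu|}H$, so substituting these expressions into the right side of the specialized \eqref{e:jams} produces a universal power series mapping, polynomial in its last batch of arguments with coefficients in $\C\dbl z,\chi,Z',\zeta'\dbr$, in exactly the variables listed in \eqref{e:ma}. Defining ${\mathcal A}_\mu$ to be this composite yields the claimed identity, and universality is inherited since ${\mathcal S}_\mu$, the ${\mathcal R}_\eta$, and the substitution $Z=(z,Q(z,\chi,0))$ are all independent of $H$.

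The one point requiring a little care — and the step I would flag as the main (mild) obstacle — is bookkeeping the arguments so that the final mapping genuinely depends only on the quantities displayed in \eqref{e:ma}, in particular that $\partial^\beta H$ is needed only at the single point $(z,Q(z,\chi,0))$ and $\partial^\beta\bar H$ only at $(\chi,0)$, and that all ``ambient'' dependence collapses into the power-series coefficients in $(z,\chi,Z',\zeta')$ together with $j_0^{|\mu|}H$. This is purely a matter of tracking which point each derivative is evaluated at through the two substitutions; there is no analytic difficulty, which is why I would, as the authors do for the preceding lemmas, indicate the reduction and leave the routine verification to the reader. I would close by remarking that \eqref{e:ma} is the form of the reflection identity that will be fed into the iteration along higher-order Segre sets in \S\ref{ss:iter}, since the left-hand side couples the tangential derivatives $\bar F_{\tau^\mu}(\chi,0)$ of the map to a matrix, $Q'_{\chi'}(F(z,Q(z,\chi,0)),\bar H(\chi,0))$, whose generic rank is controlled precisely because the maps in question are not totally degenerate by Corollary~\ref{c:rigid}.
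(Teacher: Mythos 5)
Your proof is correct and is essentially the paper's own argument: the authors likewise set $Z=(z,Q(z,\chi,0))$, $\zeta=(\chi,0)$ in \eqref{e:jams} and then substitute each $\bar G_{\tau^\eta}(\chi,0)$ by its expression \eqref{e:gderivative}, which removes precisely the terms not allowed in \eqref{e:ma}. Your bookkeeping remark (that $|\eta|\leq|\mu|$ forces only derivatives of $\bar H$ at $(\chi,0)$ of order at most $|\mu|-1$ plus $j_0^{|\mu|}H$ to appear) is exactly the point the paper leaves to the reader.
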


\begin{proof} Setting $Z=(z,Q(z,\chi,0))$ and $\zeta=(\chi,0)$ in
  \eqref{e:jams} and substituing $\bar G_{\tau^\eta}(\chi,0)$ by its expression
  given by \eqref{e:gderivative} yields the required conclusion of the lemma.
\end{proof}

We need a last independent lemma before proceeding with the proof of the main
proposition of this section.

\begin{lem}\label{l:ordervanish} Let $A=A(u,v)$ be a $\C^k$-valued formal
  power series mapping, $u,v\in \C^k$, satisfying ${\rm det}\, A_u(u,v)\not
  \equiv 0$ and $A(0,v)\equiv 0$. Assume that ${\rm ord}_u\, \left({\rm det}\,
  A_u(u,v)\right)\leq \nu$ for some nonnegative integer $\nu$. Then for every
  nonnegative integer $r$ and for every formal power series $\psi (t,v)\in \C
  \dbl t,v\dbr$, $t\in \C^k$, if ${\rm ord}_u \left(\psi (A(u,v),v)\right) >
  r(\nu +1)$, then necessarily ${\rm ord}_t\, \psi (t,v)> r$. 
\end{lem}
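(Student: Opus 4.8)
The plan is to establish the sharper quantitative estimate
\[
\ord_u\bigl(\psi(A(u,v),v)\bigr)\ \leq\ (\nu+1)\,\ord_t\,\psi(t,v);
\]
granting this, if the left-hand side exceeds $r(\nu+1)$ then $(\nu+1)\ord_t\psi>r(\nu+1)$, hence $\ord_t\psi>r$ since $\nu+1\geq 1$, which is exactly the conclusion of Lemma~\ref{l:ordervanish}. (If $\psi\equiv 0$ both sides are $+\infty$ and there is nothing to prove, so one may assume $\psi\not\equiv 0$.) Throughout I would use that $\ord_u$ is additive on products, since $\C\dbl u,v\dbr=(\C\dbl v\dbr)\dbl u\dbr$ is a formal power series ring over the integral domain $\C\dbl v\dbr$, and that $\ord_u\partial_{u_i}g\geq \ord_u g-1$ for every $g\in\C\dbl u,v\dbr$.

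First I would record a ``chain rule plus Cramer'' identity. Set $\Psi(u,v):=\psi(A(u,v),v)$ and, more generally, $\Phi(u,v):=\phi(A(u,v),v)$ for an arbitrary $\phi\in\C\dbl t,v\dbr$. The chain rule gives $\partial_{u_i}\Phi=\sum_{j=1}^{k}(\partial_{t_j}\phi)(A(u,v),v)\,\partial_{u_i}A_j(u,v)$; multiplying by the adjugate matrix $B:=\operatorname{adj}(A_u)$, whose entries are polynomials in those of $A_u$ (hence lie in $\C\dbl u,v\dbr$) and which satisfies $B\,A_u=(\det A_u)\,I_k$, yields for each $j$
\[
\bigl(\det A_u(u,v)\bigr)\,(\partial_{t_j}\phi)(A(u,v),v)=\sum_{i=1}^{k}B_{ij}(u,v)\,\partial_{u_i}\Phi(u,v),\qquad j=1,\dots,k.
\]
Since $\ord_u B_{ij}\geq 0$, $\ord_u\partial_{u_i}\Phi\geq \ord_u\Phi-1$, and $\ord_u\det A_u\leq\nu$ by hypothesis, comparing the $u$-orders of the two sides and using additivity of $\ord_u$ on the left gives the \emph{order-drop estimate}
\[
\ord_u\bigl((\partial_{t_j}\phi)(A(u,v),v)\bigr)\ \geq\ \ord_u\bigl(\phi(A(u,v),v)\bigr)-(\nu+1)
\]
for every $j$ and every $\phi\in\C\dbl t,v\dbr$.

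Then I would iterate: applying the order-drop estimate successively with $\phi=\partial_t^{\beta'}\psi$ along a chain $0,e_{j_1},e_{j_1}+e_{j_2},\dots,\beta$ gives, for every multiindex $\beta\in\N^k$,
\[
\ord_u\Psi\ \leq\ \ord_u\bigl((\partial_t^{\beta}\psi)(A(u,v),v)\bigr)+|\beta|\,(\nu+1).
\]
To finish, since $\psi\not\equiv 0$ I would choose $\beta$ with $|\beta|=\ord_t\psi$ and $(\partial_t^{\beta}\psi)(0,v)\not\equiv 0$; then setting $u=0$ in $(\partial_t^{\beta}\psi)(A(u,v),v)$ and invoking the hypothesis $A(0,v)\equiv 0$ gives $(\partial_t^{\beta}\psi)(0,v)\not\equiv 0$, so $\ord_u\bigl((\partial_t^{\beta}\psi)(A(u,v),v)\bigr)=0$, and the displayed bound collapses to $\ord_u\Psi\leq(\nu+1)\,\ord_t\psi$, as wanted. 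I do not expect a genuine obstacle: the only points requiring care are the valuation-theoretic bookkeeping (additivity of $\ord_u$, the harmless loss of one unit under $\partial_{u_i}$, and the fact that $\ord_u\det A_u\leq\nu$ is used $|\beta|$ times along the chain) and the use of $A(0,v)\equiv 0$ to ensure that the top-order part of $\psi$ survives the substitution; everything else is a direct manipulation of the Cramer identity. (Alternatively one could exploit that $\det A_u\not\equiv 0$ makes $u\mapsto A(u,v)$ dominant and pass to a generic one-dimensional slice, but the adjugate argument keeps the constant $\nu+1$ transparent.)
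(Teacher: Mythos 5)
Your argument is correct and is essentially the paper's own proof: the key step in both is the chain rule combined with the adjugate (classical inverse) of $A_u$ and the bound ${\rm ord}_u\,\det A_u(u,v)\leq\nu$, yielding that each $t$-derivative composed with $A$ costs at most $\nu+1$ in $u$-order, together with the use of $A(0,v)\equiv 0$ at the end. The only difference is organizational: the paper runs this as an induction on $r$, while you iterate the one-step estimate down to a top-order coefficient of $\psi$, which in passing gives the slightly sharper explicit inequality ${\rm ord}_u\bigl(\psi(A(u,v),v)\bigr)\leq(\nu+1)\,{\rm ord}_t\,\psi$.
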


\begin{proof} We prove the lemma by induction on $r$ and notice that the
  statement automatically holds for $r=0$. Suppose that $\psi$ is as in the
  lemma and satisfies ${\rm ord}_u \left(\psi (A(u,v),v)\right) > r(\nu
  +1)$ for some $r\geq 1$.  Differentiating $\psi (A(u,v),v)$ with respect
  to $u$, we get that the order (in $u$) of each component of
  $\psi_t(A(u,v),v)\cdot A_u(u,v)$ is strictly greater than $r\nu+r-1$.
  Multiplying $\psi_t(A(u,v),v)\cdot A_u(u,v)$ by the classical inverse of
  $A_u(u,v)$, we get the same conclusion for each component  of the power series mapping $({\rm
  det}\, A_u(u,v))\, \psi_t(A(u,v),v)$. By assumption, ${\rm ord}_u\, \left({\rm det}\,
  A_u(u,v)\right)\leq \nu$ and therefore the order (in $u$) of each
  component of $\psi_t(A(u,v),v)$ is strictly greater than $r\nu+r-1-\nu=(r-1)(\nu+1)$.
  From the induction assumption, we conclude that the order in $t$ of each
  component of $\psi_t(t,v)$ (strictly) exceeds $r-1$. To conclude that ${\rm
  ord}_t\, \psi (t,v) > r$ from the latter fact, it is enough to notice
  that $\psi (0,v)\equiv 0$ since ${\rm ord}_u \left(\psi
  (A(u,v),v)\right)> r(\nu+1)\geq 1$ and $A(0,v)\equiv 0$. The proof of
  Lemma~\ref{l:ordervanish} is complete. 
\end{proof}

We are now completely ready to prove the following main goal of this section.

\begin{prop}\label{p:soso} Let $M,M'\subset \CN$ 
  be formal generic submanifolds of the same dimension, 
  given in normal coordinates as above. Assume that $M$
  belongs to the class ${\mathcal C}$ et let $k_0$ be the integer given in Corollary~{\rm \ref{c:finitejetfirst}}.  Then the integer
  $k_1:=\max \{ k_0, \kappa_M(\nu_M(\infty)+1)\}$ has the following property:
  for any  pair $H_1,H_2\colon (\CN,0)\to (\CN,0)$
  of formal CR-transversal holomorphic mappings 
  sending $M$ into $M'$ and any nonnegative integer
  $\ell$, if $j_0^{k_1+\ell} H_1=j_0^{k_1+\ell} H_2$, then
  necessarily $(\partial^\alpha H_1)(z,0)=(\partial^\alpha H_2)(z,0)$ for all
  $\alpha \in \N^N$ with $|\alpha|\leq \ell$. Furthermore,
  $k_1$ depends upper-semicontinuously on continuous deformations of $M$.
\end{prop}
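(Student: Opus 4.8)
The plan is to prove the statement by induction on $\ell$, the case $\ell=0$ being exactly Corollary~\ref{c:finitejetfirst} since $k_1\geq k_0$. For the inductive step, suppose the conclusion holds for $\ell-1$ and let $H_1,H_2$ be CR-transversal formal maps sending $M$ into $M'$ with $j_0^{k_1+\ell}H_1=j_0^{k_1+\ell}H_2$; applying the induction hypothesis (and in particular the case $\ell=0$) gives $(\partial^\beta H_1)(z,0)=(\partial^\beta H_2)(z,0)$ for all $|\beta|\leq\ell-1$, as well as $H_1(z,0)=H_2(z,0)$. Writing $\alpha=(\alpha',\alpha'')\in\Nn\times\N^d$ according to $Z=(z,w)$, the derivatives $(\partial^\alpha H_j)(z,0)$ with $\alpha''=0$ are $\partial_z^{\alpha'}$ of $H_j(z,0)$ and those with $1\leq|\alpha''|\leq\ell-1$ are $\partial_z^{\alpha'}$ of $(\partial_w^{\alpha''}H_j)(z,0)$, so these already agree for $j=1,2$; it thus suffices to show that for every $\mu\in\N^d$ with $|\mu|=\ell$ one has $(\partial_w^\mu H_1)(z,0)=(\partial_w^\mu H_2)(z,0)$, equivalently $\bar F_{1,\tau^\mu}(\chi,0)=\bar F_{2,\tau^\mu}(\chi,0)$ and $\bar G_{1,\tau^\mu}(\chi,0)=\bar G_{2,\tau^\mu}(\chi,0)$. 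The $G$-component is immediate from \eqref{e:gderivative}: there all the arguments of $\mathcal{R}_\mu$, namely $\chi$, $\bar F_j(\chi,0)$, $(\partial^\beta\bar H_j(\chi,0))_{1\leq|\beta|\leq\ell-1}$ and $j_0^\ell H_j$, coincide for $j=1,2$ by the above.

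For the $F$-component I would use \eqref{e:ma} in Lemma~\ref{l:onemore} with $Z=(z,Q(z,\chi,0))$ and $\zeta=(\chi,0)$. Since $G_j(z,0)\equiv0$ one has $\bar G_j(\chi,0)\equiv0$, so the left-hand side there equals $\bar F_{j,\tau^\mu}(\chi,0)\cdot Q'_{\chi'}(F_j(z,Q(z,\chi,0)),\bar F_j(\chi,0),0)$, while normality of $Q'$ gives $Q'_{\chi'}(0,\chi',\tau')\equiv0$ and, more precisely, $Q'_{(z')^\gamma\chi'}(0,\chi',0)=\partial_{\chi'}\Theta'_\gamma(\chi')$ for all $\gamma$. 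Expanding both sides of \eqref{e:ma} in powers of $z$ and comparing the coefficient of $z^p$ for $|p|\leq k_1$: on the left it is $\bar F_{j,\tau^\mu}(\chi,0)\cdot N_p^{(j)}(\chi)$, where $N_p^{(j)}(\chi)$ is the $z^p$-coefficient of $Q'_{\chi'}(F_j(z,Q(z,\chi,0)),\bar F_j(\chi,0),0)$, which by the Fa\`a di Bruno formula depends only on $j_0^{|p|}F_j$, on $\bar F_j(\chi,0)$ and on the $\Theta_\alpha$ of $M$; hence $N_p^{(1)}=N_p^{(2)}=:N_p$ for $|p|\leq k_1$. On the right, the only arguments of $\mathcal{A}_\mu$ that are not identical for $H_1$ and $H_2$ are the $z$-dependent ones $H_j(z,Q(z,\chi,0))$ and $(\partial^\beta H_j)(z,Q(z,\chi,0))$, $1\leq|\beta|\leq\ell$; but their differences have order $>k_1$ in $z$, since $\partial^\beta(H_1-H_2)$ has order $\geq k_1+\ell+1-|\beta|\geq k_1+1$ in $Z$ and $Q(z,\chi,0)$ has positive order in $z$. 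Therefore the $z^p$-coefficients of the right-hand side of \eqref{e:ma} coincide for $j=1,2$ when $|p|\leq k_1$, and we obtain $\delta_\mu(\chi)\cdot N_p(\chi)=0$ for all $|p|\leq k_1$, where $\delta_\mu(\chi):=\bar F_{1,\tau^\mu}(\chi,0)-\bar F_{2,\tau^\mu}(\chi,0)$.

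It then remains to deduce $\delta_\mu\equiv0$ from these relations. Writing, again by Fa\`a di Bruno and $Q'_{(z')^\gamma\chi'}(0,\chi',0)=\partial_{\chi'}\Theta'_\gamma(\chi')$, $N_p(\chi)=\sum_{1\leq|\gamma|\leq|p|}c_{p,\gamma}(\chi)\,\big(\partial_{\chi'}\Theta'_\gamma\big)\big(\bar F_1(\chi,0)\big)$ with scalar coefficients $c_{p,\gamma}$ built from $j_0^{|p|}F_1$ and the $\Theta_\alpha$ of $M$, one wants the family $(N_p)_{|p|\leq k_1}$ to be of generic rank $n$. Here one would use: $M'\in\mathcal{C}$ with $\kappa_{M'}\leq\kappa_M\leq k_1$ (Corollary~\ref{c:rigid}), which gives that $\big(\partial_{\chi'}\Theta'_\gamma(\chi')\big)_{1\leq|\gamma|\leq k_1}$ has generic rank $n$; that $\chi\mapsto\bar F_1(\chi,0)$ has generic rank $n$ since $H_1$ is not totally degenerate (Corollary~\ref{c:rigid}), so this rank persists after evaluation at $\bar F_1(\chi,0)$; and that the scalar matrix $(c_{p,\gamma}(\chi))$ has generic full column rank once $|p|$ ranges up to a bound controlled by $\kappa_M$ and $\nu_M(\infty)$. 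The latter is the place where Lemma~\ref{l:ordervanish} is applied — with $A=\bar F_1(\cdot,0)$ (for which $\ord_\chi\det\bar F_{1,\chi}(\chi,0)\leq\nu_M(\infty)$ by \eqref{e:unifbd}), $\nu=\nu_M(\infty)$ and $r=\kappa_M$ — and is precisely what produces the term $\kappa_M(\nu_M(\infty)+1)$ in $k_1$. Granting the rank statement, a suitable $n\times n$ minor of the stacked family $(N_p)_{|p|\leq k_1}$ has nonzero determinant, which forces $\delta_\mu\equiv0$ as the formal power series ring is an integral domain. Finally, the upper-semicontinuity of $k_1$ on continuous deformations of $M$ is immediate from that of $k_0$, $\kappa_M$ and $\nu_M(\infty)$ (Corollary~\ref{c:finitejetfirst} and \S\ref{ss:nondeg}), a maximum and a product of upper-semicontinuous functions being upper-semicontinuous.

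The delicate part is this last one: verifying that $(N_p)_{|p|\leq k_1}$ — equivalently the chain-rule coefficient matrix $(c_{p,\gamma})$ — has the required generic rank with the range of $|p|$ expressed purely through $\kappa_M$ and $\nu_M(\infty)$. Everything else is the essentially formal order bookkeeping carried out above, whereas this step is what forces the combined use of normality of $Q,Q'$, of the rigidity of Corollary~\ref{c:rigid} (not total degeneracy, $\kappa_{M'}\leq\kappa_M$, and the bound \eqref{e:unifbd}), and of Lemma~\ref{l:ordervanish}.
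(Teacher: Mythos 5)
Your overall architecture is the same as the paper's: induction on $\ell$, reduction to the pure transversal derivatives $\partial_w^\mu H_j(z,0)$ with $|\mu|=\ell$, disposal of the $G$-component via Lemma~\ref{l:compute}, and extraction from \eqref{e:ma} of the relations $\delta_\mu(\chi)\cdot N_p(\chi)=0$ for $|p|\leq k_1$, where $\delta_\mu=\bar F_{1,\tau^\mu}(\chi,0)-\bar F_{2,\tau^\mu}(\chi,0)$ and $N_p$ is the $z^p$-coefficient of $Q'_{\chi'}(F_1(z,Q(z,\chi,0)),\bar H_1(\chi,0))$; your coefficient-by-coefficient bookkeeping there is a legitimate substitute for the paper's truncation $\widehat F$. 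The problem is the step you yourself flag as delicate: it is left unproved, and the route you sketch for it does not work as stated. First, your claim that the chain-rule matrix $(c_{p,\gamma}(\chi))_{|p|\leq k_1,\,1\leq|\gamma|\leq k_1}$ has generic full column rank (and that this is ``equivalent'' to the generic rank statement for $(N_p)$) is false in general: if the first Segre restriction $F_1(z,Q(z,\chi,0))$ vanishes to order $\geq 2$ in $z$ (e.g.\ $H(z,w)=(z^2,w)$ on the example in the remark after Corollary~\ref{c:rigid}), every column with $|\gamma|>k_1/2$ is annihilated by the truncation at degree $k_1$, so the kernel is nontrivial. What is true, and all that is needed, is the much weaker statement that no nonzero series of $z'$-order $\leq\kappa_M$ is killed by the substitution up to order $k_1$. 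Second, Lemma~\ref{l:ordervanish} cannot be applied ``with $A=\bar F_1(\cdot,0)$'': your relations live in the $z$-expansion, so the substitution to be inverted is the inner map $z\mapsto\widehat F(z,\chi)$, i.e.\ the degree-$k_1$ truncation in $z$ of $F_1(z,Q(z,\chi,0))$, with $\chi$ as the parameter $v$ of the lemma. For this map one must first check the hypothesis $\ord_z\det\widehat F_z(z,\chi)\leq\nu_M(\infty)$; this is not literally \eqref{e:unifbd} but follows from it by a short argument using $k_1\geq\nu_M(\infty)+1$ (this is \eqref{e:claim} and \eqref{e:teuf} in the paper), a point your sketch skips entirely.

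Once Lemma~\ref{l:ordervanish} is applied to the correct map (with $\nu=\nu_M(\infty)$, $r=\kappa_M$, using $k_1>\kappa_M(\nu_M(\infty)+1)$), its conclusion is only that $\ord_{z'}\bigl(\delta_\mu(\chi)\cdot Q'_{\chi'}(z',\bar H_1(\chi,0))\bigr)>\kappa_M$, i.e.\ $\delta_\mu(\chi)\cdot\partial_{\chi'}\Theta'_\gamma(\bar F_1(\chi,0))=0$ for all $|\gamma|\leq\kappa_M$ --- not any column-rank statement about $(c_{p,\gamma})$. The proof is then finished exactly by the ingredients you list at the end, used in the right place: $M'\in\mathcal{C}$ with $\kappa_{M'}\leq\kappa_M$ gives generic rank $n$ of $\chi'\mapsto\bigl(\Theta'_\gamma(\chi')\bigr)_{|\gamma|\leq\kappa_M}$, and not total degeneracy of $H_1$ (Corollary~\ref{c:rigid}) together with $\bar F_1(\chi,0)=\bar F_2(\chi,0)$ shows this rank survives precomposition with $\bar F_1(\chi,0)$, forcing $\delta_\mu\equiv 0$. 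So the missing piece is precisely the conversion of your order-$k_1$ relations in $z$ into the finitely many relations indexed by $|\gamma|\leq\kappa_M$; as proposed, that conversion rests on a false intermediate claim and a misapplied lemma, whereas the rest of your argument (including the upper-semicontinuity of $k_1$) is correct and parallels the paper.
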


\begin{proof} The proposition is proved by induction on $\ell$.  For $\ell=0$, the proposition
follows immediately from Corollary~\ref{c:finitejetfirst}. Consider now 
  a pair of maps $H_1,H_2$ as
  in the statement of the proposition with the same $k_1+\ell$ jet at $0$,
  where $\ell\geq 1$. Then
  from the induction assumption,  
  we know that $(\partial^\alpha
  H_1)(z,0)=(\partial^\alpha H_2)(z,0)$ for all $\alpha \in \N^N$ with
  $|\alpha|\leq \ell-1$. Hence it is enough to show that for all multiindices
  $\mu\in \N^d$ with $|\mu|=\ell$, 
  \begin{equation}\label{e:jewel2}
    \frac{\partial^\mu \bar H_1}{\partial \tau^\mu}(\chi,0)=\frac{\partial^\mu
    \bar H_2}{\partial \tau^\mu}(\chi,0).  
  \end{equation} 
  This is further simplified by noticing that
  Lemma \ref{l:compute} (more precisely  \eqref{e:gmu}
  applied with $Z=0$ and $\zeta=(\chi,0)$) implies
  that it is enough to prove that
  for all $\mu\in \N^d$ as above, 
  \begin{equation}\label{e:jewel}
    \frac{\partial^\mu \bar F_1}{\partial
    \tau^\mu}(\chi,0)=\frac{\partial^\mu \bar F_2}{\partial
    \tau^\mu}(\chi,0). 
  \end{equation} 
  Next, applying \eqref{e:ma} to both
  $H_1$ and $H_2$, we get the order in $z$ of each component of the power
  series mapping given by 
  \begin{equation}\label{e:hungry}
    \frac{\partial^\mu \bar F_1}{\partial \tau^\mu}(\chi,0)\cdot
    Q_{\chi'}'(F_1(z,Q(z,\chi,0)),\bar H_1(\chi,0))- \frac{\partial^\mu
    \bar F_2}{\partial \tau^\mu}(\chi,0)\cdot
    Q_{\chi'}'(F_2(z,Q(z,\chi,0)),\bar H_2(\chi,0)) 
  \end{equation} 
  is at
  least $k_1+1$. Consider the power series mapping 
  \begin{equation} 
    \psi
    (z',\chi):= \frac{\partial^\mu \bar F_1}{\partial
    \tau^\mu}(\chi,0)\cdot Q_{\chi'}'(z',\bar H_1(\chi,0))-
    \frac{\partial^\mu \bar F_2}{\partial \tau^\mu}(\chi,0)\cdot
    Q_{\chi'}'(z',\bar H_2(\chi,0)), 
  \end{equation} 
  and let $\widehat
  F(z,\chi)\in \C \dbl \chi\dbr [z]$ be the Taylor polynomial (in $z$)
  of order $k_1$ of $F_1(z,Q(z,\chi,0))$ viewed as a power series in
  the ring $\C \dbl \chi\dbr \dbl z\dbr$.  Note that it follows from
  our assumptions that  $\widehat F(z,\chi)$ coincides also with the
  Taylor polynomial (in $z$) of order $k_1$ of $F_2(z,Q(z,\chi,0))$
  (also viewed as a power series in the ring $\C \dbl \chi\dbr \dbl
  z\dbr$). Hence since the order in $z$ of each component of the power
  series mapping given by \eqref{e:hungry} is at least $k_1+1$, this
  also holds for the power series mapping $\psi (\widehat
  F(z,\chi),\chi)$.  Furthermore, we claim that
  \begin{equation}\label{e:claim} 
    {\rm ord}_z\ \left({\rm det}\,
    \widehat F_z(z,\chi)\right)\leq \nu_M (\infty).  
  \end{equation} Indeed,
  suppose not. Since $${\rm ord}_z\, \left(\widehat
  F(z,\chi)-F_1(z,Q(z,\chi,0))\right)\geq k_1+1,$$ we have
  \begin{equation}\label{e:teuf} {\rm ord}_z\,  \left(\widehat
    F_z(z,\chi)-\frac{\partial}{\partial
    z}\left[F_1(z,Q(z,\chi,0))\right] \right)\geq k_1\geq \nu_M (\infty)+1.
  \end{equation} 
  Therefore \eqref{e:teuf} yields ${\rm ord}_z\,
  \left({\rm det }\, \displaystyle \frac{\partial}{\partial
  z}\left[F_1(z,Q(z,\chi,0))\right]\right)\geq \nu_M(\infty)+1$ and hence in
  particular that 
  $${\rm ord}\, \left({\rm det} \frac{\partial
  F_1}{\partial z}(z,0)\right)\geq \nu_M(\infty)+1,$$ 
  which contradicts
  \eqref{e:unifbd} and proves the claim. Since ${\rm ord}_z\,
  \left(\psi(\widehat F(z,\chi),\chi)\right)\geq k_1+1 > 
  \kappa_M(\nu_M+1)$ and since $\widehat F(0,\chi)\equiv 0$, from
  \eqref{e:claim} and Lemma \ref{l:ordervanish} we conclude that
  ${\rm ord}_{z'}\, \psi (z',\chi) > \kappa_M$, which is equivalent
  to say that
  \begin{equation}\label{e:equal} \frac{\partial^\mu \bar
    F_1}{\partial \tau^\mu}(\chi,0)\cdot \frac{\partial
    \Theta_{\alpha}'}{\partial \chi'}(\bar F_1(\chi,0))=
    \frac{\partial^\mu \bar F_2}{\partial \tau^\mu}(\chi,0)\cdot
    \frac{\partial \Theta_{\alpha}'}{\partial \chi'}(\bar
    F_2(\chi,0)), 
  \end{equation} 
  for all $\alpha \in \N^n$ with
  $|\alpha|\leq \kappa_M$. By Corollary \ref{c:rigid}, the formal
  submanifold $M'\in {\mathcal C}$ and $\kappa_{M'}\leq \kappa_M$.
  Therefore since the formal map $\chi' \mapsto 
  \Theta_{ \kappa_M}'(\chi')$ is of generic
  rank $n$, and by assumption $\bar F_1(\chi,0)=\bar
  F_2(\chi,0)$, and since this map is not totally degenerate by virtue of Corollary~\ref{c:rigid},
  it follows from  \eqref{e:equal} that
  \eqref{e:jewel} holds 
  which completes the proof of Proposition
  \ref{p:soso}.  
\end{proof}

\subsection{Iteration and proof of Theorem \ref{t:main}}\label{ss:iter}

We now want to iterate the jet determination property along higher
order Segre sets by using the reflection identities from
\cite{LM2} established for holomorphic self-automorphisms. 
Such identities could not be used to establish
Corollary~\ref{c:finitejetfirst} and Proposition~\ref{p:soso}, since for 
CR-transversal mappings $H=(F,G)$, the matrix $F_z(0)$ need not be invertible. 
On the other hand, they will be good enough  for the iteration
process, since $F_z (z,0)$ has generic full rank in view of Corollary~\ref{c:rigid}. 
We therefore first collect from \cite{LM2} the necessary reflection
identities. Even though, as mentioned above,  such identities were considered in
\cite{LM2} only for holomorphic self-automorphisms of a given
real-analytic generic submanifold of $\CN$, we note here that
their proof also yields the same identities for merely not totally
degenerate formal holomorphic maps between formal generic
submanifolds. We start with the following version of
\cite[Proposition 9.1]{LM2}.

\begin{prop}\label{p:reflection1}
In the situation of Lemma~{\rm \ref{l:computecomplex}}, there
exists a universal power series $\D=\D(Z,\zeta; \cdot)$ polynomial in its last 
argument with coefficients in the ring $\C \dbl Z,\zeta \dbr$
and, for every $\alpha \in \N^n\setminus \{0\}$, another universal
$\Cd$-valued  power series mapping
$\PP_{\alpha}=\PP_{\alpha}(Z,\zeta;\cdot)$ $($whose components belong to the same ring as that of $\D)$, such
that for every not totally degenerate formal holomorphic map
$H\colon (\CN,0)\to (\CN,0)$ sending $M$ into $M'$ with
$H=(F,G)\in \C^n\times \C^d$ the following holds:
\begin{enumerate}
\item[{\rm (i)}] $\D (Z,\zeta;\widehat j_{\zeta}^{1} \bar{H})|_{(Z,\zeta)=(0,(\chi,0))}={\rm det}\,
\left(\bar F_\chi (\chi,0)\right)\not \equiv 0$;
\item[{\rm (ii)}]
$(\D(Z,\zeta;\widehat j_{\zeta}^{1} \overline{H}
))^{2|\alpha|-1}\,
\bar{Q}_{{\chi'}^{\alpha}}'(\bar{F}(\zeta),H(Z))=\PP_{\alpha}(Z,\zeta;\widehat
j_{\zeta}^{\left| \alpha \right|} \bar{H}),\quad {\rm for}\, \, \,
(Z,\zeta)\in {\mathcal M}$.
\end{enumerate}
\end{prop}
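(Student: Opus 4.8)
The plan is to prove both statements simultaneously by induction on $|\alpha|$, following the strategy of \cite[Proposition~9.1]{LM2}, the only new point being that one must track where the invertibility of $F_z(0)$ was used and replace it by the weaker "not totally degenerate" hypothesis $\Rk F_z(z,0) = n$. First I would rewrite the fundamental identity \eqref{e:fundamental} in its "complexified" form along $\mathcal M$, namely
\begin{equation}\label{e:refl-start}
  \bar G(\zeta) = \bar Q'(\bar F(\zeta), H(Z)), \qquad (Z,\zeta)\in \mathcal M,
\end{equation}
which is just the conjugate/$\sigma$-applied version of \eqref{e:fundamental}; this is the identity one differentiates in $\chi$ (equivalently, applies the tangential CR vector fields to). Differentiating \eqref{e:refl-start} once with respect to each $\chi_j$ and evaluating on the first Segre set (i.e. at $Z = (z,Q(z,\chi,0))$, $\zeta=(\chi,0)$) expresses $\bar Q'_{\chi'}(\bar F(\zeta),H(Z))$ in terms of $\bar F_\chi(\chi,0)$ and $\bar G_\chi(\chi,0)$; here one sets $\D := \det \bar F_\chi$, and statement (i) is then immediate from the normalization \eqref{e:normality} (which gives $\bar Q'_{\chi'}(0,H(0)) = 0$ is false — rather one uses $\bar Q'(\chi',0) = \bar\tau'$, i.e. the linear term in $\chi'$ of $\bar Q'$ along $w'=0$ is what survives) together with the fact that $H$ is not totally degenerate, forcing $\det \bar F_\chi(\chi,0)\not\equiv 0$.

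For the inductive step I would differentiate \eqref{e:refl-start} $\alpha$ times in $\chi$. By the chain rule, $\partial_\chi^\alpha$ applied to $\bar Q'(\bar F(\zeta), H(Z))$ produces a leading term $\bar Q'_{{\chi'}^\alpha}(\bar F,H)\cdot(\bar F_\chi)^{\otimes|\alpha|}$ plus lower-order terms involving $\bar Q'_{{\chi'}^\beta}$ with $|\beta|<|\alpha|$ (times various derivatives of $\bar F$ of order $\le |\alpha|$) and the pure $\chi$-derivatives $\partial_\chi^\alpha \bar G$. Solving for $\bar Q'_{{\chi'}^\alpha}(\bar F,H)$ requires inverting the matrix $\bar F_\chi$ repeatedly: each application of the induction hypothesis for a multiindex $\beta$ already carries a factor $\D^{2|\beta|-1}$ in its denominator, and combining these with the extra $|\alpha|$-fold Jacobian inversion from the leading term, an elementary bookkeeping of the worst exponent gives exactly the power $2|\alpha|-1$ of $\D$ needed to clear all denominators — this is the origin of the precise exponent in (ii). The resulting numerator is a universal polynomial in $\widehat j_\zeta^{|\alpha|}\bar H$ with coefficients that are universal power series in $(Z,\zeta)$ (coming from the coefficients of $Q,Q'$ and from $\Theta'_\beta$), which we name $\PP_\alpha$; that these coefficients do not depend on $M,M'$ beyond their normal-coordinate data $Q,Q'$ is what "universal" means, and it is verified by inspecting the recursion.

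The main obstacle I anticipate is purely organizational rather than conceptual: one must set up the recursion so that the exponent of $\D$ appearing at stage $|\alpha|$ is \emph{exactly} $2|\alpha|-1$ and no larger — a naive estimate would give a worse (larger) exponent — which forces a careful grouping of terms, exploiting that the leading term needs only the $|\alpha|$-th inverse power while each lower term of order $|\beta|$ already comes pre-cleared except for the additional Jacobians introduced by differentiating $\bar F(\zeta)$ itself, of which there are at most $|\alpha|-|\beta|$. A secondary point requiring care is that \eqref{e:refl-start} is an identity only modulo $\mathcal I(M)$, so all differentiations must be performed before restricting to $\mathcal M$, and the substitution $\zeta = (\chi,0)$, $Z=(z,Q(z,\chi,0))$ must be checked to be compatible with this; but since $v^2(z,\chi) = (z,Q(z,\chi,0))$ and $\bar v^1(\chi) = (\chi,0)$ lie on $\mathcal M$ in the sense recalled in \S\ref{ss:segremaps}, this is legitimate. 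Everything else — that $H$ not totally degenerate suffices in place of $F_z(0)$ invertible — is exactly the observation already flagged in the text preceding the proposition, namely that the proof of \cite[Proposition~9.1]{LM2} only ever uses $\det \bar F_\chi(\chi,0)\not\equiv 0$, which is $\Rk F_z(z,0)=n$ after conjugation.
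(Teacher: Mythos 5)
Your overall route is the paper's own: the paper does not reprove this proposition but imports it from \cite[Proposition 9.1]{LM2}, observing precisely what you observe, namely that the proof there never uses invertibility of $F_z(0)$ but only $\det \bar F_\chi(\chi,0)\not\equiv 0$, i.e.\ not-total-degeneracy. However, two steps in your sketch, as written, would not go through. First, $\D$ cannot be taken to be $\det \bar F_\chi$. The identity $\bar G(\zeta)=\bar Q'(\bar F(\zeta),H(Z))$ holds only modulo $\mathcal I(M)$, so you may not differentiate it with the plain $\partial/\partial\chi_j$ (these fields are not tangent to $\mathcal M$); you must apply the tangential $(0,1)$ fields $\bar L_j=\partial_{\chi_j}+\sum_l \bar Q^l_{\chi_j}(\chi,z,w)\,\partial_{\tau_l}$, or equivalently parametrize $\mathcal M$ by $\tau=\bar Q(\chi,z,w)$ and differentiate in $\chi$ there. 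Either way, the matrix produced by the chain rule is $\bigl(\bar L_j\bar F^k\bigr)=\bar F_\chi(\zeta)+\bar F_\tau(\zeta)\cdot \bar Q_\chi(\chi,z,w)$, and $\D$ must be its determinant: this is a polynomial in $\widehat j^1_\zeta\bar H$ whose coefficients lie in $\C\dbl Z,\zeta\dbr$ through $\bar Q_\chi$, and it reduces to $\det\bar F_\chi(\chi,0)$ at $(Z,\zeta)=(0,(\chi,0))$ only because normality gives $\bar Q_\chi(\chi,0,0)=0$ — that reduction is the actual content of (i). With your choice $\D=\det\bar F_\chi$, the Cramer-rule step that yields (ii) simply does not close.

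Second, you must not evaluate at $Z=(z,Q(z,\chi,0))$, $\zeta=(\chi,0)$ at the base step. Statement (ii) is an identity for $(Z,\zeta)\in\mathcal M$, i.e.\ a congruence modulo $\mathcal I(M)$ in all of $(Z,\zeta)$, and that full strength is exactly what is used later: Proposition~\ref{p:iteration} substitutes $Z=v^{j+1}$, $\zeta=\bar v^{j}$ for every $j$, not just the first Segre set; moreover, your induction has to differentiate the case $|\alpha|$ again to reach $|\alpha|+1$, which is impossible once you have restricted. Finally, the exponent bookkeeping you worry about is automatic if you go up one derivative at a time: setting $u_\alpha:=\bar Q'_{{\chi'}^{\alpha}}(\bar F(\zeta),H(Z))$ and noting that $\bar L_j$ annihilates $H(Z)$, one gets $\bar L_j u_\alpha=\sum_k u_{\alpha+e_k}\,\bar L_j\bar F^k$ on $\mathcal M$; Cramer's rule together with the inductive formula $u_\alpha=\PP_\alpha/\D^{2|\alpha|-1}$ (so that $\bar L_j u_\alpha$ has denominator $\D^{2|\alpha|}$) gives $u_{\alpha+e_k}$ with denominator $\D^{2|\alpha|+1}=\D^{2(|\alpha|+1)-1}$ and numerator polynomial in $\widehat j^{|\alpha|+1}_\zeta\bar H$, so the exponent $2|\alpha|-1$ falls out with no delicate grouping of terms.
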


We also need the following version of \cite[Proposition 9.4]{LM2}.

\begin{prop}\label{p:reflection3}
In the situation of Lemma~{\rm \ref{l:computecomplex}}, for any
$\mu \in \N^d\setminus \{0\}$ and $\alpha \in \N^n\setminus\{0\}$,
there exist universal $\Cd$-valued power series mappings ${\mathcal
B}_{\mu,\alpha}(Z,\zeta,Z',\zeta';\cdot)$ and ${\mathcal
Q}_{\mu,\alpha}(Z,\zeta;\cdot)$ polynomial in their last argument with
coefficients in the ring $\C \dbl Z,\zeta,Z',\zeta' \dbr$ and $\C
\dbl Z,\zeta \dbr $ respectively such that for every not totally
degenerate formal holomorphic map $H\colon (\CN,0)\to (\CN,0)$
sending $M$ into $M'$ with $H=(F,G)\in \C^n\times \C^d$ the
following holds:
\begin{equation}\label{e:fundamental3}
F_{w^\mu}(Z)\cdot
\left(\bar{Q}_{{\chi'}^{\alpha},z'}'(\bar{F}(\zeta),H(Z))+
Q_{z'}'(F(Z),\bar{H}(\zeta))\cdot
\bar{Q}_{{\chi'}^{\alpha},w'}(\bar{F}(\zeta),H(Z))\right)=(*)_1+(*)_2,
\end{equation}
where $(*)_1$  is given by
\begin{equation}\label{e:*1}
(*)_1:={\mathcal B}_{\mu,\alpha}\left(Z,\zeta,H(Z),\bar{H}(\zeta);
\widehat j_{Z}^{\left| \mu \right| - 1} H, \widehat
j_{\zeta}^{\left| \mu \right|}{\bar{H}} \right),
\end{equation}
and $(*)_2$ is given by
\begin{equation}\label{e:*2}
(*)_2:=\frac{{\mathcal Q}_{\mu,\alpha}(Z,\zeta, \widehat
j_{\zeta}^{|\alpha| + |\mu|} \bar{H})}{(\D(Z,\zeta, \widehat
j_{\zeta}^{1} \bar{H} ))^{2|\alpha|+|\mu|-1}},
\end{equation}
and where ${\mathcal D}$ is given by Proposition~{\rm
\ref{p:reflection1}}.
\end{prop}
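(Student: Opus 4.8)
The plan is to follow the proof of \cite[Proposition~9.4]{LM2}, whose only hypothesis that matters here is the non-vanishing $\D\not\equiv0$. In \cite{LM2} that proof is written for a biholomorphic self-map of a real-analytic generic submanifold, but the biholomorphy is used there solely to guarantee $\D\not\equiv0$; for a not totally degenerate formal map $H=(F,G)$ between (possibly different) formal generic submanifolds this is exactly the content of Proposition~\ref{p:reflection1}(i), which gives $\D|_{(Z,\zeta)=(0,(\chi,0))}=\det\bar F_\chi(\chi,0)\not\equiv0$; moreover no step of the argument uses $M=M'$, convergence, or the invertibility of $F_z(0)$. So the task reduces to recalling that proof and checking this hypothesis downgrade.

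The underlying mechanism is the following. Writing \eqref{e:fundamental} on $\mathcal M$ as $G(Z)=Q'(F(Z),\bar H(\zeta))$ and applying the $(1,0)$ vector fields $L_1,\dots,L_n$ tangent to $M$ (in normal coordinates $L_l=\partial_{z_l}+\sum_k Q^k_{z_l}(z,\chi,\tau)\partial_{w_k}$), one gets $L_lG(Z)=Q'_{z'}(F(Z),\bar H(\zeta))\,L_lF(Z)$ on $\mathcal M$. Since each $L_l$ preserves $\mathcal I(M)$, applying $L_l$ to the identity of Proposition~\ref{p:reflection1}(ii) stays within on-$\mathcal M$ identities; the chain rule makes the combination $\bar Q'_{{\chi'}^\alpha,z'}(\bar F(\zeta),H(Z))+Q'_{z'}(F(Z),\bar H(\zeta))\,\bar Q'_{{\chi'}^\alpha,w'}(\bar F(\zeta),H(Z))$ appear, multiplied by $L_lF(Z)$, together with a term proportional to $\D^{2|\alpha|-2}(L_l\D)\,\bar Q'_{{\chi'}^\alpha}(\bar F(\zeta),H(Z))$ coming from differentiating the power of $\D$, while the right-hand side of (ii) becomes another universal expression of the same type (polynomial in the jets of $\bar H$, coefficients in $\C\dbl Z,\zeta\dbr$). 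Re-substituting $\bar Q'_{{\chi'}^\alpha}(\bar F(\zeta),H(Z))=\PP_\alpha/\D^{2|\alpha|-1}$ from (ii) absorbs the stray term into a single denominator $\D^{2|\alpha|}$, at the cost of raising the $\bar H$-jet order by one. This is the model one-step computation; to produce $F_{w^\mu}(Z)$ for general $\mu\in\N^d\setminus\{0\}$ one differentiates the basic identities a total of $|\mu|$ times, routing the derivatives through the $Z$-side by the $L_l$'s together with $\tau$-differentiation of the \emph{unrestricted} identity \eqref{e:fundamental}, and using the normality relations $Q(0,\chi,\tau)=Q(z,0,\tau)=\tau$, so that $F_{w^\mu}(Z)\cdot(\text{the above combination})$ comes out as the top-order term; every lower-order contribution is eliminated by re-substitution of Proposition~\ref{p:reflection1}(ii), of the $\sigma$-symmetric counterpart of the identity \eqref{e:gmu} of Lemma~\ref{l:compute} (which expresses $G_{w^\beta}(Z)$ on $\mathcal M$ as $F_{w^\beta}(Z)\,Q'_{z'}(F(Z),\bar H(\zeta))$ plus a term in $Z,\zeta,H(Z),\bar H(\zeta),\widehat j_Z^{|\beta|-1}H,\widehat j_\zeta^{|\beta|}\bar H$), and of Lemma~\ref{l:onemore}. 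Collecting the denominator-free terms into $(*)_1=\mathcal B_{\mu,\alpha}(Z,\zeta,H(Z),\bar H(\zeta);\widehat j_Z^{|\mu|-1}H,\widehat j_\zeta^{|\mu|}\bar H)$ and the remaining one into $(*)_2=\mathcal Q_{\mu,\alpha}(Z,\zeta;\widehat j_\zeta^{|\alpha|+|\mu|}\bar H)/\D^{2|\alpha|+|\mu|-1}$ yields \eqref{e:fundamental3}; universality (independence of $H$) is automatic, since every step is a formal application of the chain rule and Leibniz' rule to $Q,Q',\D,\PP_\alpha$ and to \eqref{e:fundamental}.

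The real work, I expect, is the bookkeeping: one must verify that after all the re-substitutions exactly one power of $\D$ survives, namely $\D^{2|\alpha|+|\mu|-1}$, and that the jet orders are precisely those stated. This is arranged by the discipline of clearing every newly created $\D$-denominator with Proposition~\ref{p:reflection1}(ii) immediately, before the next differentiation, so that powers of $\D$ never accumulate, and by noting that an $L_l$ never raises a $\zeta$-jet order (it kills $\zeta$-derivatives of $\bar H$) while $\tau$-differentiation never raises a $Z$-jet order. The secondary, essentially clerical task is to run through the proof in \cite{LM2} and confirm that the biholomorphy hypothesis there is invoked only to guarantee $\D\not\equiv0$ and the availability of Proposition~\ref{p:reflection1}, both of which here follow from non-total-degeneracy alone.
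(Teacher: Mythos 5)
Your proposal is correct and coincides with the paper's own treatment: the paper gives no independent argument for Proposition~\ref{p:reflection3}, but simply observes (as you do) that the proof of \cite[Proposition~9.4]{LM2} uses the biholomorphy hypothesis only to ensure $\D\not\equiv 0$ and the availability of Proposition~\ref{p:reflection1}, both of which hold for not totally degenerate formal maps between formal generic submanifolds. Your sketch of the underlying differentiation/re-substitution mechanism and the jet-order and $\D$-power bookkeeping is consistent with that cited proof, so there is nothing to add.
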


In what follows, we use the notation introduced for the Segre
mappings given in \S \ref{ss:segremaps} (associated to a fixed choice of normal coordinates for $M$). 
We are now ready to prove the following.

\begin{prop}\label{p:iteration}
Let $M,M'$ be formal generic submanifolds of $\CN$ of the same
dimension given in normal coordinates as above. Assume that $M'$
belongs to the class ${\mathcal C}$ and let $j$ be a positive
integer.  Then for every nonnegative integer $\ell$  and for every
pair $H_1,H_2\colon (\CN,0)\to (\CN,0)$ of not totally degenerate
formal holomorphic mappings sending $M$ into $M'$, if
$(\partial^\alpha H_1)\circ v^j= (\partial^\alpha H_2)\circ v^j$
for all $\alpha \in \N^N$ with $|\alpha|\leq \kappa_{M'}+\ell$
then necessarily for all $\beta \in \N^N$ with $|\beta|\leq \ell$,
one has
$$(\partial^\beta H_1)\circ v^{j+1}= (\partial^\beta H_2)\circ v^{j+1}.$$
\end{prop}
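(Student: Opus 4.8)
The plan is to run an induction on $\ell$, using the reflection identities of Propositions~\ref{p:reflection1} and~\ref{p:reflection3} pulled back along the Segre mapping $v^{j+1}$. For $\ell=0$ the statement is essentially immediate: composing identity~(ii) of Proposition~\ref{p:reflection1} with $(Z,\zeta)=(v^{j+1}(t^{[j+1]}), \bar v^j(t^{[j]}))$ (which lies in $\mathcal M$), one gets that $(\D\circ \cdots)^{2|\alpha|-1}\, \bar Q'_{\chi'^\alpha}(\bar F(\bar v^j),H(v^{j+1}))$ is a universal polynomial in $\widehat j^{|\alpha|}_\zeta\bar H$ evaluated at $\bar v^j$; since $\bar H\circ \bar v^j$ is determined by the hypothesis $H_1\circ v^j=H_2\circ v^j$ (conjugating), the left-hand side is the same for $H_1$ and $H_2$. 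Because $\D\circ\cdots = \det \bar F_\chi(\chi,0)\not\equiv 0$ along the first Segre set and $F_z(z,0)$ has generic full rank by Corollary~\ref{c:rigid}, one deduces after dividing by powers of $\D$ that $\bar Q'_{\chi'^\alpha}(\bar F(\bar v^j), H(v^{j+1}))$ — hence $\Theta'_\alpha$ composed appropriately with $H\circ v^{j+1}$ in the first $n$ slots — agrees for $H_1,H_2$ for all $|\alpha|\le \kappa_{M'}$; since $M'\in\mathcal C$ and the map $\chi'\mapsto (\Theta'_\alpha(\chi'))_{|\alpha|\le\kappa_{M'}}$ has generic rank $n$, this pins down $F\circ v^{j+1}$, and then $G\circ v^{j+1}$ follows from the defining equation $w=Q'$, i.e.\ from $G(v^{j+1})=Q'(F(v^{j+1}),\bar F(\bar v^j),\bar G(\bar v^j))$.

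For the inductive step, assume the conclusion holds for $\ell-1$, so all $(\partial^\beta H_1)\circ v^{j+1}=(\partial^\beta H_2)\circ v^{j+1}$ for $|\beta|\le\ell-1$. First I would handle the normal derivatives $F_{w^\mu}$ with $|\mu|=\ell$ using Proposition~\ref{p:reflection3}: pull back \eqref{e:fundamental3} along $(Z,\zeta)=(v^{j+1},\bar v^j)$. The right-hand side $(*)_1+(*)_2$ involves only $\widehat j^{|\mu|-1}_Z H$, $\widehat j^{|\alpha|+|\mu|}_\zeta \bar H$ (evaluated at $v^{j+1}$ resp.\ $\bar v^j$) and $H,\bar H$ themselves; by the outer induction hypothesis ($\widehat j^{\kappa_{M'}+\ell}_Z H$ matches on $v^j$, hence the $\zeta$-side is controlled) together with the inner induction hypothesis (the $Z$-side derivatives of order $\le \ell-1$ match on $v^{j+1}$), both $(*)_1$ and $(*)_2$ take the same value for $H_1$ and $H_2$, once one also notes $\D$ is nonvanishing along $v^{j+1}$. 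Hence the left-hand side $F_{w^\mu}(v^{j+1})\cdot\big(\bar Q'_{\chi'^\alpha,z'}+Q'_{z'}\cdot\bar Q'_{\chi'^\alpha,w'}\big)$ agrees for both maps, for every $\alpha$ with $|\alpha|\le\kappa_{M'}$. The matrix formed by these quantities as $\alpha$ ranges is, up to the identification $\Theta'_\alpha = Q'_{\chi'^\alpha}(\cdot,0,0)$ and after multiplying out, controlled by $\partial_{\chi'}(\Theta'_\alpha)$, which has generic rank $n$ because $M'\in\mathcal C$; combined with the full-rank property of $F_z(z,0)$ from Corollary~\ref{c:rigid} (allowing one to invert the $Q'_{z'}$ block along the first Segre set after substituting $z'=F(v^{j+1})$), this forces $F_{w^\mu}\circ v^{j+1}$ to agree for $H_1,H_2$.

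Once all derivatives $(\partial^\beta F)\circ v^{j+1}$ of order exactly $\ell$ are known to agree — the pure $z$-derivatives and mixed derivatives being obtained by differentiating the already-established identity $F\circ v^{j+1}$ tangentially and combining with the normal-derivative information just extracted, exactly as in \cite[\S 9]{LM2} — one recovers $(\partial^\beta G)\circ v^{j+1}$ of order $\ell$ by differentiating the defining identity $G(v^{j+1})=Q'(F(v^{j+1}),\bar F(\bar v^j),\bar G(\bar v^j))$ a total of $\ell$ times and using that the right-hand side only involves $\partial$-derivatives of $F$ at $v^{j+1}$ (already controlled) and of $\bar H$ at $\bar v^j$ of order $\le\ell$, which in turn are controlled since $H_1\circ v^j=H_2\circ v^j$ together with the outer hypothesis on jets up to order $\kappa_{M'}+\ell$ gives agreement of $\widehat j^{\kappa_{M'}+\ell}_\zeta\bar H\circ\bar v^j$ — and $\ell\le\kappa_{M'}+\ell$. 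This yields $(\partial^\beta H_1)\circ v^{j+1}=(\partial^\beta H_2)\circ v^{j+1}$ for all $|\beta|\le\ell$, completing the induction.

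I expect the main obstacle to be the bookkeeping in the inductive step: one must verify carefully that every term appearing on the right-hand sides of the pulled-back reflection identities \eqref{e:*1}--\eqref{e:*2} genuinely involves only derivatives of $H$ of order $\le\ell-1$ evaluated on $v^{j+1}$ and derivatives of $\bar H$ of order $\le\kappa_{M'}+\ell$ evaluated on $\bar v^j$ (not higher), so that both induction hypotheses apply; and one must be careful about the precise loss of derivatives — the factor $\kappa_{M'}$ (rather than some larger quantity) in the hypothesis $|\alpha|\le\kappa_{M'}+\ell$ is exactly what is needed for the rank-$n$ argument with $\Theta'_{\kappa_{M'}}$ to close, and checking that no additional derivatives leak in through the $\D$-denominators or the $Q'$-substitutions is the delicate point.
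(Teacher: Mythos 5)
Your overall strategy is the paper's: induction on $\ell$, pulling back the reflection identities of Propositions~\ref{p:reflection1} and~\ref{p:reflection3} along $(Z,\zeta)=(v^{j+1},\bar v^{j})$, rank arguments coming from $M'\in\mathcal{C}$, and a tangential-differentiation trick for the mixed derivatives. However, the base case $\ell=0$ has a genuine gap. From the agreement of $\bar Q'_{\chi'^\alpha}(\bar F_\nu\circ\bar v^{j},H_\nu\circ v^{j+1})$ for $|\alpha|\leq\kappa_{M'}$ you conclude that, because $\chi'\mapsto(\Theta'_\alpha(\chi'))_{|\alpha|\leq\kappa_{M'}}$ has generic rank $n$, this ``pins down'' $F\circ v^{j+1}$. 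Generic rank $n$ does not give injectivity on formal power series arguments (think of $T\mapsto T^2$), so equality of the images does not by itself force equality of the arguments. The paper supplies the missing mechanism: one substitutes \eqref{e:piano} so that the unknown $F_\nu\circ v^{j+1}$ sits in a single slot of a fixed map $\Psi$ (all the other slots are common to $\nu=1,2$, by the hypothesis on $v^j$ and its specializations to lower-order Segre maps), writes $\Psi(u,\cdot)-\Psi(v,\cdot)=(u-v)\cdot\int_0^1\Psi_{T^{j+1}}(tu+(1-t)v,\cdot)\,dt$, and shows the determinant of the integrated Jacobian is $\not\equiv0$ by setting $t^{[j]}=0$, where it reduces to $\det\bigl(\partial\bar\Theta'/\partial z'\bigr)$ composed with $F_1\circ v^1$; it is exactly here that the not-total-degeneracy of $H_1$ (generic full rank of $F_1(z,0)$) enters. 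In your write-up the rank of $F_z(z,0)$ is attached to the division by $\D$, where it plays no role (and note it is a hypothesis of the proposition, not a consequence of Corollary~\ref{c:rigid}, which requires $M\in\mathcal{C}$ and CR-transversality), while no injectivity argument is given. In the inductive step this particular difficulty disappears, since the unknown $F_{w^\mu}\circ v^{j+1}$ enters linearly against the family $\Gamma_\alpha$, so the rank argument you sketch does close there.

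A second gap is the recovery of the order-$\ell$ derivatives of $G$ along $v^{j+1}$. Differentiating the composed identity $G(v^{j+1})=Q'(F(v^{j+1}),\bar F(\bar v^{j}),\bar G(\bar v^{j}))$ in the variables $t^{[j+1]}$ yields only tangential derivatives of the restriction, i.e.\ combinations such as $G_{z_1}\circ v^{j+1}+Q_{z_1}(t^{j+1},\bar v^{j})\cdot G_w\circ v^{j+1}$, from which the ambient derivatives $(\partial^\beta G)\circ v^{j+1}$ --- in particular the transversal ones $G_{w^\mu}\circ v^{j+1}$ --- cannot be isolated; these tangential identities contain nothing beyond the already-known equality $G_1\circ v^{j+1}=G_2\circ v^{j+1}$. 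The paper obtains the transversal $G$-derivatives from the reflection identity \eqref{e:gmu} of Lemma~\ref{l:compute} evaluated at the swapped pair $(Z,\zeta)=(v^{j},\bar v^{j+1})$ (which also lies on $\mathcal M$): this expresses $\bar G_{\tau^\mu}\circ\bar v^{j+1}$ in terms of $\bar F_{\tau^\mu}\circ\bar v^{j+1}$ (already obtained), the $\zeta$-jet of $\bar H$ of order $|\mu|-1$ along $\bar v^{j+1}$ (inner induction) and the $Z$-jet of $H$ of order $|\mu|\leq\kappa_{M'}+\ell$ along $v^{j}$ (outer hypothesis). The remaining mixed derivatives of $H$ are then treated by the induction on the number of $z$-derivatives that you allude to, as in \eqref{e:sickofthiscrap?}. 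With these two repairs your argument coincides with the paper's proof.
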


\begin{proof} We prove the proposition by induction on $\ell$.

For  $\ell =0$, suppose that $H_1,H_2\colon (\CN,0)\to (\CN,0)$ is
a pair of not totally degenerate formal holomorphic mappings
sending $M$ into $M'$ satisfying
\begin{equation}\label{e:explain}
(\partial^\alpha H_1)\circ v^j= (\partial^\alpha H_2)\circ
v^j,\quad  |\alpha|\leq \kappa_{M'}.
\end{equation}
 Then setting $Z=v^{j+1}(t^{[j+1]})$ and
$\zeta=\bar{v}^{j}(t^{[j]})$ in Proposition \ref{p:reflection1}
(ii) and using the above assumption, one obtains that for all
$\alpha \in \N^N$ with $|\alpha|\leq \kappa_{M'}$
\begin{equation}\label{e:equalityQ}
\bar{Q}_{{\chi'}^{\alpha}}'(\bar{F}_1\circ \bar v^{j},H_1\circ
v^{j+1})= \bar{Q}_{{\chi'}^{\alpha}}'(\bar{F}_2\circ \bar
v^{j},H_2 \circ v^{j+1}).
\end{equation}
In what follows, to avoid some unreadable notation, we denote by
$V^j=V^{j}(T^1,\ldots,T^{j+1})$ the Segre mapping of order $j$
associated to $M'$ and also write $T^{[j]}=(T^1,\ldots,T^{j})\in
\C^n\times \ldots \times \C^n$. Next we note that we also have
\begin{equation}\label{e:piano}
H_\nu\circ v^{j+1}=V^{j+1}({F}_\nu\circ v^{j+1},\bar F_\nu\circ
\bar v^{j}, F_\nu \circ v^{j-1},\ldots),\quad \nu=1,2.
\end{equation}
 Since $M'\in {\mathcal C}$, we may choose multiindices
 $\alpha^{(1)},\ldots,\alpha^{(n)}\in \N^n$ and $s_1,\ldots,s_n\in \{1,\ldots,d\}$ with $|\alpha^{j}|\leq \kappa_{M'}$
  such that the formal map 
  $\bar \Theta' \colon z' 
  \mapsto ({{\bar{\Theta}}}^{'s_1}_{\alpha^{(1)}}(z'),
  \ldots,{{\bar{\Theta}}}^{'s_n}_{\alpha^{(n)}}(z'))$
  is of generic rank $n$. Denote by $\Psi$ the formal map $(T^{j+1},\ldots,T^{1})\mapsto
   \left(\bar{Q}^{'s_i}_{{\chi'^{\alpha^{(i)}}}}(T^{j},V^{j+1}(T^{j+1},T^j,\ldots,T^1))\right)_{1\leq i\leq
   n}$. As in the proof of Corollary \ref{c:finitejetfirst}, we
   write
   $$\Psi (u,T^{[j]})-\Psi (v,T^{[j]})=(u-v)\cdot \int_0^1 \Psi_{T^{j+1}}(tu+(1-t)v,T^{[j]})dt,$$
and note that it follows from \eqref{e:equalityQ}, \eqref{e:piano}
and \eqref{e:explain} that
\begin{align*}
0&=\Psi (F_1\circ v^{j+1},\bar F_1\circ \bar
v^j,\ldots)-\Psi (F_2\circ v^{j+1},\bar F_2\circ \bar v^j,\ldots)\\
&=(F_1\circ v^{j+1}-F_2\circ v^{j+1})\cdot \int_0^1
\Psi_{T^{j+1}}(tF_1\circ v^{j+1}+(1-t)F_2\circ v^{j+1},\bar
F_1\circ \bar v^j,F_1\circ v^{j-1},\ldots)\, dt.
\end{align*}
We now claim that ${\rm det}\, \int_0^1 \Psi_{T^{j+1}}(tF_1\circ
v^{j+1}+(1-t)F_2\circ v^{j+1},\bar F_1\circ \bar v^j,\ldots)dt
\not \equiv 0$. Indeed if it were not the case  we would in
particular have, after setting $t^{[j]}=0$ in the above
determinant, that
\begin{equation}\label{e:det}
 {\rm det}\, \left(\frac{\partial \bar \Theta'}{\partial z'}(F_1\circ
v^1)\right)\equiv 0. \end{equation} But since $H_1$ is not totally
degenerate,  \eqref{e:det} implies that $\Rk \bar \Theta'<n$, a
contradiction. This proves the claim and hence that $F_1\circ
v^{j+1}=F_2\circ v^{j+1}$ and therefore that $H_1\circ
v^{j+1}=H_2\circ v^{j+1}$ in view of \eqref{e:piano}. This
completes the proof of the proposition for the case $\ell =0$.

Now assume that $\ell>0$ and suppose that $(\partial^\alpha
H_1)\circ v^j= (\partial^\alpha H_2)\circ v^j$ for all $\alpha \in
\N^N$ with $|\alpha|\leq \kappa_{M'}+\ell$. From the induction
assumption, we know that
\begin{equation}\label{e:explainagain}
(\partial^\beta H_1)\circ v^{j+1}= (\partial^\beta H_2)\circ
v^{j+1},\quad \forall \beta \in \N^N,\, \, \, |\beta|\leq \ell-1.
\end{equation}
It remains therefore to show the equality of the $\ell$-th
order derivatives restricted to the $j+1$-th Segre set. We first
prove that this is so for the pure transversal derivatives i.e.
that
\begin{equation}\label{e:yield}
\forall \mu \in \N^d,\ |\mu|=\ell,\quad
\frac{\partial^{|\mu|}H_1}{
\partial w^\mu}\circ v^{j+1}=
 \frac{\partial^{|\mu|}H_2}{ \partial w^\mu}\circ v^{j+1}.
\end{equation}
Let $\mu$ be such a multiindex. Setting $Z=v^{j+1}(t^{[j+1]})$ and
$\zeta=\bar v^{j}(t^{[j]})$ in \eqref{e:fundamental3} applied to
both $H_1$ and $H_2$ and using \eqref{e:explainagain}, we get for
all $\alpha \in \N^n$ with $|\alpha|\leq \kappa_{M'}$
\begin{equation}\label{e:earth}
\left(\frac{\partial^{|\mu|}F_1}{ \partial w^\mu}\circ
v^{j+1}\right)\cdot \Gamma_\alpha
=\left(\frac{\partial^{|\mu|}F_2}{
\partial w^\mu}\circ v^{j+1}\right)\cdot \Gamma_\alpha,
\end{equation}
where
$$\Gamma_\alpha=\Gamma_\alpha (t^{[j+1]}):=\bar{Q}_{{\chi'}^{\alpha},z'}'(\bar{F}_1\circ \bar v^{j},H_1\circ v^{j+1})+
Q_{z'}'(F_1\circ v^{j+1},\bar{H}_1\circ \bar v^{j})\cdot
\bar{Q}_{{\chi'}^{\alpha},w'}(\bar{F}\circ \bar v^j,H\circ
v^{j+1}).$$ To conclude from \eqref{e:earth} that
 $\displaystyle \frac{\partial^{|\mu|}F_1}{ \partial w^\mu}\circ v^{j+1}=\displaystyle
 \frac{\partial^{|\mu|}F_2}{ \partial w^\mu}\circ v^{j+1}$, it is enough to show that the
  generic rank of the family of matrices  $(\Gamma_\alpha)_{|\alpha|\leq \kappa_{M'}}$ is
  $n$. This holds trivially since $\Gamma_\alpha(0,\ldots,0,t^{j+1})=
  \displaystyle \frac{\partial \bar \Theta_{\alpha}'}{\partial z'}(t^{j+1})$
  and since $M'\in {\mathcal C}$. Next using the identity \eqref{e:gmu} given by
  Lemma \ref{l:compute} applied to $\zeta=\bar v^{j+1}(t^{[j+1]})$ and
  $Z=v^{j}(t^{[j]})$, we immediately get that
 $\displaystyle \frac{\partial^{|\mu|}\bar G_1}{ \partial \tau^\mu}\circ \bar v^{j+1}=\displaystyle
 \frac{\partial^{|\mu|}\bar G_2}{ \partial \tau^\mu}\circ \bar v^{j+1}$ which
 yields \eqref{e:yield}.

 To complete the proof of the induction, we need to show that $(\partial^\beta H_1)\circ v^{j+1}=
(\partial^\beta H_2)\circ v^{j+1}$ for arbitrary
$\beta=(\beta_1,\ldots,\beta_N) \in \N^N$ with $|\beta|=\ell$. We
prove it by induction on the number
$c_\beta:=\beta_1+\ldots+\beta_n$. For $c_\beta=0$, this follows
from \eqref{e:yield} proved above. Now if $c_\beta>0$, we may
assume without loss of generality that $\beta_1>0$ and write
$\beta=(1,0,\ldots,0)+\tilde \beta$ with $|\tilde \beta|=\ell-1$.
By \eqref{e:explainagain} we know that $\partial^{\tilde
\beta}H_1\circ v^{j+1}=\partial^{\tilde \beta}H_2\circ v^{j+1}$
and hence by differentiating this latter identity with respect to
first variable of $t^{j+1}\in \C^n$, we get
\begin{multline}\label{e:sickofthiscrap?}
\partial^{\beta}H_1\circ v^{j+1}+Q_{z_1}(t^{j+1},\bar v^{j})\cdot
\left( \left(\frac{\partial^{\ell}H_1}{\partial w \partial
z^{\tilde \beta}}\right)\circ v^{j+1}\right)\\=
\partial^{\beta}H_2\circ v^{j+1}+Q_{z_1}(t^{j+1},\bar v^{j})\cdot
\left(\left(\frac{\partial^{\ell}H_2}{\partial w \partial
z^{\tilde \beta}}\right)\circ v^{j+1}\right),
\end{multline}
from which the desired equality  $\partial^{\beta}H_1\circ
v^{j+1}=\partial^{\beta}H_2\circ v^{j+1}$ follows by using the
induction assumption. The proof of the proposition is therefore
complete.
\end{proof}

Combining now Proposition \ref{p:iteration}, Proposition~\ref{p:soso}, Corollary
\ref{c:rigid} and Corollary \ref{c:finitejetfirst}, one gets the
following.

\begin{prop}\label{p:almostdone}
Let $M,M'\subset \CN$ be formal generic submanifolds of 
the same dimension. Assume that $M$
belongs to the class ${\mathcal C}$. Then for every positive
integer $j$, the integer
\[
k_j = k_1+\kappa_M(j-1),\]
where $k_1$ is the integer defined in Proposition~{\rm \ref{p:soso}},
has the following property: 
If $H_1,H_2\colon (\CN,0)\to (\CN,0)$ are two formal
CR-transversal holomorphic mappings sending $M$ into $M'$ 
such that $j_0^{k_j} H_1 = j_0^{k_j} H_2$, 
then necessarily $H_1\circ
v^j=H_2\circ v^j $. Furthermore, $k_j$ depends
upper-semicontinuously on continuous deformations of $M$.
\end{prop}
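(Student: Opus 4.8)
The plan is to prove, by induction on $j\geq 1$, the following slightly strengthened assertion: for every nonnegative integer $\ell$ and every pair $H_1,H_2\colon(\CN,0)\to(\CN,0)$ of formal CR-transversal holomorphic mappings sending $M$ into $M'$, if $j_0^{k_j+\ell}H_1=j_0^{k_j+\ell}H_2$, then $(\partial^\alpha H_1)\circ v^j=(\partial^\alpha H_2)\circ v^j$ for all $\alpha\in\N^N$ with $|\alpha|\leq\ell$. Specializing to $\ell=0$ yields Proposition~\ref{p:almostdone}, since $v^1(t^1)=(t^1,0)$ identifies $H_\nu\circ v^1$ with the restriction $H_\nu(z,0)$.

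Before running the induction I would record the structural output of Corollary~\ref{c:rigid}: since $M\in{\mathcal C}$ and there exists a CR-transversal map sending $M$ into $M'$, every such map (in particular $H_1$ and $H_2$) is not totally degenerate, $M'\in{\mathcal C}$, and $\kappa_{M'}\leq\kappa_M$. These are exactly the hypotheses needed to invoke Proposition~\ref{p:iteration}, and the inequality $\kappa_{M'}\leq\kappa_M$ is what will make the induction close. The base case $j=1$ is immediate: $k_1=k_1+\kappa_M(1-1)$, and the assertion is then precisely Proposition~\ref{p:soso} (whose $\ell=0$ instance is itself Corollary~\ref{c:finitejetfirst}).

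For the inductive step, assume the assertion for $j$ and suppose $j_0^{k_{j+1}+\ell}H_1=j_0^{k_{j+1}+\ell}H_2$. Since $k_{j+1}=k_j+\kappa_M$, we have $k_{j+1}+\ell=k_j+\kappa_M+\ell\geq k_j+\kappa_{M'}+\ell$, so the jets of $H_1$ and $H_2$ agree up to order $k_j+(\kappa_{M'}+\ell)$; applying the induction hypothesis with slack $\kappa_{M'}+\ell$ gives $(\partial^\alpha H_1)\circ v^j=(\partial^\alpha H_2)\circ v^j$ for all $|\alpha|\leq\kappa_{M'}+\ell$. Proposition~\ref{p:iteration}, legitimately applicable because $M'\in{\mathcal C}$ and the maps are not totally degenerate, then yields $(\partial^\beta H_1)\circ v^{j+1}=(\partial^\beta H_2)\circ v^{j+1}$ for all $|\beta|\leq\ell$, which is the assertion for $j+1$.

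Finally, the upper-semicontinuity of $k_j=k_1+\kappa_M(j-1)$ on continuous deformations of $M$ follows from the upper-semicontinuity of $k_1$ established in Proposition~\ref{p:soso} together with that of $\kappa_M$ recorded in \S\ref{ss:nondeg}: $j-1\geq 0$ is a fixed integer, so $\kappa_M(j-1)$ is upper-semicontinuous, and a sum of upper-semicontinuous functions is upper-semicontinuous. The only delicate point is the choice of the induction statement: one must carry the auxiliary parameter $\ell$ throughout, so that the $\kappa_{M'}$ extra derivatives demanded by Proposition~\ref{p:iteration} at each iteration can be absorbed into the larger increment $\kappa_M$ of the jet order; everything else is bookkeeping on top of the four cited results.
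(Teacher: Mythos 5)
Your proof is correct and follows exactly the route the paper intends: the paper states Proposition~\ref{p:almostdone} as an immediate combination of Proposition~\ref{p:iteration}, Proposition~\ref{p:soso}, Corollary~\ref{c:rigid} and Corollary~\ref{c:finitejetfirst}, and your induction on $j$ with the auxiliary parameter $\ell$ (using $\kappa_{M'}\leq\kappa_M$ and non-total degeneracy from Corollary~\ref{c:rigid} to feed Proposition~\ref{p:iteration}) is precisely that combination, with the bookkeeping made explicit.
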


\begin{proof}{Completion of the proof of Theorem~\ref{t:main}}
Firstly, we may assume that $M$ and $M'$ are given in normal
coordinates as above and we denote by $d$ the codimension of $M$.
By Proposition \ref{p:almostdone},  
if $H_1,H_2\colon (\CN,0)\to
(\CN,0)$ are two formal CR-transversal holomorphic mappings
sending $M$ into $M'$ with the same $k_{d+1}$-jet,
then necessarily $H_1\circ v^{d+1}=H_2\circ v^{d+1}$. By the
finite type assumption on $M$, we have from Theorem \ref{t:ber}
that $\Rk v^{d+1}=N$  and hence that $H_1=H_2$. Furthermore, it also follows from
Proposition~\ref{p:almostdone} that the integer $k_{d+1}$ depends upper-semicontinuously on perturbations of
$M$, which completes the proof of Theorem~\ref{t:main}.
\end{proof}

\section{Application to smooth generic submanifolds and proofs of Theorem~\ref{t:main1} and
Corollary~\ref{c:cor1}}\label{s:end}

We have the following result obtained from Theorem~\ref{t:main} by considering the smooth deformation of $M$ given by varying its base point as explained in \S \ref{ss:second}.

\begin{thm}\label{t:main22}
Let $M\subset \C^N$ be a smooth generic submanifold that is in the
class ${\mathcal C}$ and of finite type at each of its points.
Then for every point $p\in M$ there exists an integer $\ell_p$,
depending upper-semicontinuously on $p$, such that for every
smooth generic submanifold $M'\subset \CN$ of the same dimension
as that of $M$, if $h_1,h_2\colon (M,p)\to M'$ are two germs of
smooth CR-transversal mappings with the same $\ell_p$ jet at $p$,
then necessarily $j^k_ph_1=j_p^kh_2$ for all positive integers
$k$.
\end{thm}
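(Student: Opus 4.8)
The plan is to deduce Theorem~\ref{t:main22} from Theorem~\ref{t:main} by means of the two standard translations recorded in \S\ref{s:formal}: from a germ of smooth CR mapping to its formal holomorphic extension, and from the smooth generic submanifold $M$ with a varying base point to the associated smooth family of formal generic submanifolds.

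First I fix $p_0\in M$ and a smooth defining function $\rho=(\rho_1,\dots,\rho_d)$ for $M$ near $p_0$; for $p$ near $p_0$, let $M_p$ denote the formal generic submanifold whose defining series $r(Z,\zeta;p)$ is the complexified Taylor series of $\rho$ at $p$, as in \S\ref{ss:second}. Then $(M_p)$ is a smooth family of formal generic submanifolds whose member $M_p$ is the formal submanifold attached to $(M,p)$. By hypothesis, for every $p$ the germ $(M,p)$ lies in ${\mathcal C}$ and is of finite type; equivalently, each $M_p$ lies in ${\mathcal C}$ and is of finite type (recall from \S\ref{ss:nondeg} that formal finite type coincides with the Bloom--Graham condition). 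Hence Theorem~\ref{t:main} applies to each $M_p$ and furnishes an integer $K(M_p)\geq 1$, depending only on $M_p$; I set $\ell_p:=K(M_p)$.

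Next I check the jet determination property at a fixed $p\in M$. Let $h_1,h_2\colon(M,p)\to M'$ be germs of smooth CR-transversal mappings with $j^{\ell_p}_ph_1=j^{\ell_p}_ph_2$; since $\ell_p\geq 1$ this forces $h_1(p)=h_2(p)=:p'$. By \S\ref{ss:third}, each $h_i$ admits a unique formal holomorphic extension which, read in coordinates centred at $p$ and at $p'$, is a formal holomorphic map $H_i\colon(\CN,0)\to(\CN,0)$ sending $M_p$ into $M'_{p'}$, the formal submanifold attached to $(M',p')$; and $H_i$ is CR-transversal because $h_i$ is. Since $M$ is generic, the correspondence $h_i\mapsto H_i$ is compatible with jets of every order (see \cite{BERbook}), so the hypothesis gives $j^{K(M_p)}_0H_1=j^{K(M_p)}_0H_2$. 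As $M'_{p'}$ has the same dimension as $M_p$, Theorem~\ref{t:main}(i) yields $H_1=H_2$; consequently $h_1$ and $h_2$ have the same Taylor series at $p$, that is, $j^k_ph_1=j^k_ph_2$ for all positive integers $k$.

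Finally, the upper-semicontinuity of $p\mapsto\ell_p$ is exactly what Theorem~\ref{t:main}(ii) provides. The family $(M_p)$ constructed near $p_0$ is a continuous (indeed smooth) family of formal generic submanifolds containing the distinguished member $M_{p_0}$, so by the definition of upper-semicontinuous dependence on continuous deformations (\S\ref{ss:second}) there is a neighbourhood $\omega$ of $p_0$ on which every $M_p$ belongs to ${\mathcal C}$ and is of finite type and on which $p\mapsto K(M_p)=\ell_p$ is upper-semicontinuous; since $p_0\in M$ was arbitrary, $p\mapsto\ell_p$ is upper-semicontinuous on all of $M$. I do not expect any genuinely hard step, since the whole substance lies in Theorem~\ref{t:main}: the only points requiring routine care are that the moving-base-point construction really produces a smooth family of formal generic submanifolds to which both clauses of Theorem~\ref{t:main} apply --- openness of membership in ${\mathcal C}$ being the remark in \S\ref{ss:nondeg}, while finite type at every point is a hypothesis --- and that, because normal coordinates can be chosen to depend smoothly on $p$ (\S\ref{ss:second}), the identification $h_i\leftrightarrow H_i$ and the passage from $\ell_p$-jets of $h_i$ to $\ell_p$-jets of $H_i$ are handled uniformly in $p$.
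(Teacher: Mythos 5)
Your proposal is correct and follows exactly the route the paper intends: Theorem~\ref{t:main22} is stated there as an immediate consequence of Theorem~\ref{t:main} applied to the smooth family of formal submanifolds obtained by varying the base point of $M$ (as in \S\ref{ss:second}), with part (i) giving the jet determination via the formal holomorphic extensions of the CR maps and part (ii) giving the upper-semicontinuity of $p\mapsto\ell_p$. Your write-up simply makes explicit the routine translations (smooth CR map to formal extension, jets of $h_i$ to jets of $H_i$) that the paper leaves implicit.
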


We also have the following slightly stronger 
version of Corollary~\ref{c:cor1} which is an immediate consequence of Corollary~\ref{c:cor2} and the fact that
any smooth real hypersurface of $\CN$ that is of D'Angelo finite type at some point $p\in M$ necessarily does not contain any formal curve at that point.

\begin{cor}\label{c:newcorol}
  Let $M,M'\subset \CN$ be smooth real hypersurfaces. Assume that 
  $M\in \mathcal{C}$
and that $M'$ is of D'Angelo finite type at each of their points.
Then for every $p\in M$, 
there exists an integer $\ell=\ell(p)$, 
depending upper-semicontinuously on $p$, such that 
 if $h_1,h_2\colon (M,p)\to M'$ are two germs of 
 smooth CR mappings with the same $\ell$-jet at $p$,
then necessarily $j^k_ph_1=j^k_ph_2$ for all positive integers $k$.
\end{cor}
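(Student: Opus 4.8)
The plan is to read off Corollary~\ref{c:newcorol} from Corollary~\ref{c:cor2} by passing pointwise from the smooth to the formal category, the only additional ingredient being the standard fact that D'Angelo finite type rules out formal curves. First I would fix an arbitrary $p_0\in M$ and reduce to producing, on a neighbourhood $\omega$ of $p_0$ in $M$, an integer $\ell(p)$ for each $p\in\omega$ with the asserted jet-determination property and such that $p\mapsto\ell(p)$ is upper-semicontinuous on $\omega$. Working in local holomorphic coordinates centred at $p_0$ and complexifying, at the varying base point $p$, the Taylor series of a fixed smooth defining function of $M$, I get a smooth family $(M_p)_{p}$ of formal real hypersurfaces with $M_{p_0}$ the formal hypersurface attached to $(M,p_0)$, exactly as in \S\ref{ss:second}. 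Since $(M,p_0)\in\mathcal{C}$ and the class $\mathcal{C}$ is open under continuous deformation, I may shrink $\omega$ so that $M_p\in\mathcal{C}$ for every $p\in\omega$.

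Next I would treat a fixed pair $h_1,h_2\colon(M,p)\to M'$, $p\in\omega$, of germs of smooth CR maps agreeing at $p$ to some order; in particular $q:=h_1(p)=h_2(p)\in M'$. In coordinates centred at $p$ in the source and at $q$ in the target, each $h_i$ extends to a unique formal holomorphic map $H_i\colon(\CN,0)\to(\CN,0)$ with $H_i(M_p)\subset M'_q$ (uniqueness of the formal extension for generic submanifolds, \S\ref{ss:third}), $M'_q$ denoting the formal hypersurface attached to $(M',q)$. Under this correspondence $j_p^\ell h_1=j_p^\ell h_2$ translates to $j_0^\ell H_1=j_0^\ell H_2$, and $j_p^k h_1=j_p^k h_2$ for all $k$ to $H_1=H_2$. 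The one geometric input is that $M'_q$ contains no formal curve in the sense of \S\ref{s:further}: since $M'$ is of D'Angelo finite type at $q$, a non-constant formal curve lying in $M'_q$ would force the order of contact of curves with $M'$ at $q$ to be infinite, contradicting D'Angelo finiteness (see \cite{DAfintyp}). Hence Corollary~\ref{c:cor2} applies to the pair $(M_p,M'_q)$ and yields an integer $K=K(M_p)$, depending only on $M_p$, such that $j_0^K H_1=j_0^K H_2$ forces $H_1=H_2$.

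Then I would simply set $\ell(p):=K(M_p)$; note this depends only on the germ of $M$ at $p$, not on $M'$ nor on the maps, as required. If $j_p^{\ell(p)}h_1=j_p^{\ell(p)}h_2$, then $j_0^{\ell(p)}H_1=j_0^{\ell(p)}H_2$, hence $H_1=H_2$, hence $j_p^k h_1=j_p^k h_2$ for every positive integer $k$. For the upper-semicontinuity, I would invoke the last assertion of Corollary~\ref{c:cor2}, that $K$ depends upper-semicontinuously on continuous deformations of the source, applied to the family $(M_p)_p$ (which is smooth, hence continuous, and passes through $M_{p_0}$); this gives upper-semicontinuity of $p\mapsto\ell(p)$ near $p_0$, and since $p_0$ was arbitrary, of $\ell$ on $M$.

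The argument is entirely routine once Corollary~\ref{c:cor2} is granted; the only step I expect to require any care is the implication ``D'Angelo finite type at $q$ $\Longrightarrow$ $M'_q$ contains no formal curve'', which is what makes $M'_q$ an admissible target in Corollary~\ref{c:cor2}. Everything else --- uniqueness of the formal extension, the translation of base points, the bookkeeping of jets, and the propagation of upper-semicontinuity of $K$ through the deformation --- is purely formal.
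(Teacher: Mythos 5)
Your argument is correct and is essentially the paper's own: Corollary~\ref{c:newcorol} is deduced from Corollary~\ref{c:cor2} via the unique formal extension of smooth CR maps at each base point, the observation that D'Angelo finite type at $q$ excludes formal curves in $M'_q$, and the upper-semicontinuity of $K$ under the continuous family of formal hypersurfaces obtained by varying the base point (exactly as in \S\ref{ss:second} and Theorem~\ref{t:main22}). No gaps to report.
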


\begin{proof}{Proof of Theorem~\ref{t:main1}}
Theorem~\ref{t:main1} follows immediately from
Theorem~\ref{t:main22} since in the setting of
Theorem~\ref{t:main1} smooth CR finite mappings are automatically
CR-transversal (see \cite{ER1}) and since every germ of an
essentially finite smooth generic submanifold of $\CN$ is
necessarily in the class ${\mathcal C}$.
\end{proof}

\begin{proof}{Proof of Corollary~\ref{c:cor1}} Corollary~\ref{c:cor1} follows immediately from Corollary~\ref{c:newcorol} since any smooth real hypersurface of $\CN$ that is of D'angelo finite at some point $p\in M$ is necessarily in the class ${\mathcal C}$ at that point. (We note here that Corollary~\ref{c:cor1} could also be derived directly from Theorem~\ref{t:main1} using some results from \cite{BR4,ER1}.) 
The last part of the corollary follows from the first part after 
applying the regularity result 
given in \cite{DP2} (see also
\cite{H2} for the case $N=2$).
\end{proof}

\bibliographystyle{hplain}
\bibliography{LM3}
\end{document}